\newcommand{\R}{\mathbb{R}}
\newcommand{\C}{\mathbb{C}}
\newcommand{\E}{\mathbb{E}}
\renewcommand{\P}{\mathbb{P}}
\newcommand{\cE}{\mathcal{E}}
\newcommand{\cF}{\mathcal{F}}
\newcommand{\Cue}{C_{\mathrm{UE}}}
\newcommand{\cue}{c_{\mathrm{UE}}}
\newcommand{\Cle}{C_{\mathrm{LE}}}
\newcommand{\cle}{c_{\mathrm{LE}}}
\newcommand{\UE}{\hyperref[def-UE]{\mathbf{UE}_{\beta}}}
\newcommand{\UEF}{\hyperref[def-UE]{\mathbf{UE}_{F}}}
\newcommand{\ULE}{\hyperref[def-ULE]{\mathbf{ULE}_{\beta}}}
\newcommand{\VD}{\hyperref[def-VD-alpha]{\mathbf{VD}_\alpha}}
\newcommand{\RVD}{\hyperref[def-RVD]{\mathbf{RVD}_{\alpha'}}}
\newcommand{\lamb}{\hyperref[def:lambda-beta]{\mathbf{\lambda_\beta}}}
\newcommand{\lambge}{\hyperref[def:lambda-beta]{\mathbf{\lambda_{\beta\ge}}}}
\newcommand{\lamble}{\hyperref[def:lambda-beta]{\mathbf{\lambda_{\beta\le}}}}
\newcommand{\lambF}{\hyperref[def:lambda-beta]{\lambda_{F}}}
\newcommand{\lambFge}{\hyperref[def:lambda-beta]{\lambda_{F\ge}}}
\newcommand{\lambFle}{\hyperref[def:lambda-beta]{\lambda_{F\le}}}
\newcommand{\FK}{\hyperref[df:FK]{\mathbf{FK}_{\beta}}}
\newcommand{\FKF}{\hyperref[df:FK]{\mathbf{FK}_{F}}}
\newcommand{\Eb}{\hyperref[def-E-beta]{\mathbf{E_\beta}}}
\newcommand{\Ebge}{\hyperref[def-E-beta]{\mathbf{E_\beta}_{\ge}}}
\newcommand{\Eble}{\hyperref[def-E-beta]{\mathbf{E_\beta}_{\le}}}
\newcommand{\EO}{\hyperref[def-EO]{\bar{\mathbf{E}}_\beta}}
\newcommand{\EOF}{\hyperref[def-EO]{\bar{\mathbf{E}}_{F}}}
\newcommand{\EF}{\hyperref[def-E-beta]{\mathbf{E}_{F}}}
\newcommand{\EFge}{\hyperref[def-E-beta]{\mathbf{E}_{F\ge}}}
\newcommand{\EFle}{\hyperref[def-E-beta]{\mathbf{E}_{F\le}}}
\newcommand{\Ebar}{\hyperref[def:e-bar]{\bar{\mathbf{E}}}}
\newcommand{\Ebarp}{\hyperref[df:e-bar-prime]{\bar{\mathbf{E}}^\prime}}
\newcommand{\Heis}{\mathbb{H}^{2n+1}}
\newcommand{\ch}{\mathcal{H}}
\newcommand{\BH}{\mathcal{B}}
\newcommand{\pa}{{    \gamma}}
\newcommand{\hs}{{   \varrho}}
\newtheorem{thm}{Theorem}[section]
\newtheorem*{thm*}{Theorem}
\newtheorem{corollary}[thm]{Corollary}
\newtheorem{lemma}[thm]{Lemma}
\newtheorem{remark}[thm]{Remark}
\newtheorem{prop}[thm]{Proposition}
\newtheorem{theorem}[thm]{Theorem}
\theoremstyle{definition}
\newtheorem{df}[thm]{Definition}
\newtheorem*{claim*}{Claim}
\newtheorem{example}{Example}[section]
\numberwithin{equation}{section}
\newtheorem*{acknowledgement}{Acknowledgement}
\DeclareMathOperator*{\esup}{ess\,sup}
\DeclareMathOperator*{\einf}{ess\, inf}
\begin{document}

\title[]{Spectral bounds for exit times on metric measure Dirichlet spaces and applications}

\author[Mariano]{Phanuel Mariano{$^\dag$}}
\thanks{\footnotemark {$\dag$} Research was supported in part by NSF Grant DMS-2316968
and an AMS-Simons Travel Grant 2019-2023.}
	\address{Department of Mathematics\\
		Union College\\
		Schenectady, NY 12308,  U.S.A.}
	\email{marianop@union.edu}

\author[Wang]{Jing Wang{$^{\ddag}$}}
\thanks{\footnotemark {$\ddag$} Research was supported in part by NSF Grant DMS-1855523 and NSF DMS-2246817}
\address{ Department of Mathematics\\
Purdue University\\
West Lafayette, IN 47907,  U.S.A.}
\email{jingwang@purdue.edu}

\keywords{exit times, spectral bounds, heat kernel, heat semigroup}
\subjclass{Primary 60J60, 35P15, 47D07; Secondary  60J45, 58J65, 35J25, 35K08}


\begin{abstract}

Assuming the heat kernel on a doubling Dirichlet metric measure space has a sub-Gaussian bound, we prove an asymptotically sharp spectral upper bound on the survival probability of the associated diffusion process. As a consequence, we can show that the supremum of the mean exit time over all starting points is finite if and only if the bottom of the spectrum is positive. Among several applications, we show that the spectral upper bound on the survival probability implies a bound for the Hot Spots constant for Riemannian manifolds. Our results apply to interesting geometric settings including sub-Riemannian manifolds and fractals.


\end{abstract}

\maketitle

\tableofcontents

\section{Introduction}

Let $B_t,t\ge0$ be a Brownian motion in $\R^d$ started from $x\in \R^d$. For any domain $D\subset \R^d$ that contains $x$, we define the \emph{first exit time} from a domain $D$ by 
\[
\tau_{D}=\inf\left\{ t>0\mid B_{t}\notin D\right\}.
\]
Exit times of Brownian motion from Euclidean domains have been widely studied in the literature. Its distribution function $\mathbb{P}_{x}\left(\tau_{D}>t\right)$, referred to as the \emph{survival probability}, has strong connections with the spectral information of the domain. For instance, it is widely known that the large time asymptotic behavior of $\mathbb{P}_{x}\left(\tau_{D}>t\right)$ reflects  the bottom of spectrum of the domain $D$ since
\begin{equation}
\lim_{t\to\infty}\frac{1}{t}\log\mathbb{P}_{x}\left(\tau_{D}>t\right)=-\lambda\left(D\right),\label{eq:Intro-2}
\end{equation}
where $\lambda(D)$ is the \emph{bottom of the  Dirichlet spectrum} for $-\frac{1}{2}\Delta_D$ acting in $L^2(D)$   {given by 
\[
\lambda\left(D\right)=\inf_{f\in H_{0}^{1}\left(D\right),f\neq 0}\frac{\frac{1}{2}\int_{D}\left|\nabla f\right|^{2}dx}{\int_{D}f^{2}dx}.
\]
}
Another interesting interaction is due to MacDonald-Meyers \cite{McDonald-Meyers}, who  showed that the sequence of $L^1$ norm of exit moments given by   $\{ \int_D\mathbb{E}_x(\tau_D^n) d\mu\}_{n\ge1}$ determines the spectral sequence of Dirichlet eigenvalues $\{\lambda_n(D)\}_{n\ge1}$. We also point to the work of \cite{Colladay-Langford-McDonald-2018,Dryden-Langford-McDonald-2017} who proved various inequalities between the exit times moments and the Dirichlet eigenvalues on Riemannian manifolds. 

The $L^\infty$ norm of the mean exit time, namely  $\sup_{x\in D}\mathbb{E}_{x}\left[\tau_{D}\right]$, has also been extensively studied through its ``tug of war" against the bottom of the spectrum. In particular, for any domain $D\subset \mathbb{R}^d$, it is known that
\begin{equation}\label{Torsion}
1\leq \lambda(D) \sup_{x\in D}\mathbb{E}_{x}\left[\tau_{D}\right]\leq {M_d},
\end{equation}
so long as $\lambda(D)>0$ or $\sup_{x\in D}\mathbb{E}_{x}\left[\tau_{D}\right]<\infty$, where $M_d$ is a uniform constant that only depends on the dimension $d$. This was independently obtained through various levels of generality in the works of \cite{Banuelos-Carrol1994,Vandenberg-Carroll-2009,Giorgi-Smits-2010}.
 
Recently in \cite{Banuelos-Mariano-Wang-2020}, Ba\~nuelos and the present authors showed that there exists an asymptotically sharp constant $M_{p,d}$ depending only on  $p$ and $d$ such that for all $D\subset\mathbb{R}^{d}$ with $\lambda\left(D\right)>0$ and for any $p\geq1$, 
\begin{equation}
\Gamma\left(p+1\right)\leq\lambda\left(D\right)^{p}\cdot\sup_{x\in D}\mathbb{E}_{x}\left[\tau_{D}^{p}\right]\leq M_{p,d}. \label{eq:Intro-4}
\end{equation}
In regards to other papers most closely related to our work we also note the works of \cite{Vandenberg-2017a,Huang-etal-2022,Huang-Wang-2023}. In \cite{Vandenberg-2017a}, various spectral bounds have been obtained for Riemannian manifolds. More recently, related spectral bounds for exit times have also been obtained in the general metric measure setting in \cite{Huang-etal-2022,Huang-Wang-2023}. We also refer the reader to \cite{Boudabra-Markowsky2020b,Boudabra-Markowsky-2020,Mariano-Panzo-2020,Panzo2021,Vogt-2019a} for other similar results regarding $\lambda_{1}\left(D\right),\lambda\left(D\right)$, $\tau_D$ and exit time moments in various settings. 

The \textbf{goal} of this paper is to obtain results relating exit times and the bottom of the spectrum for general geometric settings, that include large classes of doubling metric measure Dirichlet spaces of recent interest, such as Carnot groups, sub-Riemannian manifolds with curvature lower bounds, and fractals. We seek for minimal assumptions on the diffusion processes to be considered on these metric measure spaces that are sufficient for observing similar phenomenon such as Brownian motions on $\R^d$.  Our work covers general domains that do not necessarily have a discrete spectrum, hence our methods requires a precise analysis of the heat kernel. As a consequence, our result provides several applications that are useful to other interesting problems in analysis and probability. This includes spectral bounds for the mean exit times, a lower bound and large time asymptotic for the survival probability, an estimate for the Hot Spot constant for Riemannian manifolds, and a characterization for heat kernel upper bounds.





Let $\left(M,d,\mu\right)$ be a metric measure space endowed with a regular, local, Dirichlet form $\left(\mathcal{E},\mathcal{F}\right)$, which admits a heat kernel $p_{M}(x,y,t)$ for $(x,y,t)\in M\times M \times (0,\infty)$. {Recall that a \emph{Dirichlet form} $\left(\mathcal{E},\mathcal{F}\right)$ in
$L^{2}\left(M,\mu\right)$ is a bilinear form $\mathcal{E}:\mathcal{F}\times\mathcal{F}\to\mathbb{R}$
defined on a dense subspace $\mathcal{F}$ of $L^{2}\left(M,\mu\right)$
that satisfies the following three properties: (1) Positivity: $\mathcal{E}\left(f\right):=\mathcal{E}\left(f,f\right)\geq0$,
(2) Closedness: $\mathcal{F}$ is a Hilbert space with respect to
the inner product given by $\mathcal{E}_{1}\left(f,g\right):=\mathcal{E}\left(f,g\right)+\left(f,g\right)$,
and (3) Markov Property: if $f\in\mathcal{F}$ then $\tilde{f}:=\left(f\wedge1\right)_{+}=\max\left\{ f\wedge1,0\right\} \in\mathcal{F}$
and $\mathcal{E}\left(\tilde{f}\right)\leq\mathcal{E}\left(f\right)$. Let $\left\{ P_{t}\right\} _{t\geq 0}$ be the heat semigroup in $L^{2}$
associated to the Dirichlet form $\left(\mathcal{E},\mathcal{F}\right)$
(See Section \ref{SecA:Semigroup}). The \emph{heat kernel} $p_M\left(x,y,t\right)$, $x,y\in M$, $t\ge0$ of the Dirichlet form $\left(\mathcal{E},\mathcal{F}\right)$
is 
 the integral kernel of the heat semigroup $\left\{ P_{t}\right\} _{t\geq0}$. That is for any $t>0$, $p_M(x,y,t)$ is non-negative, measurable in $(x,y)\in M\times M$, and for any $f\in L^{2}\left(M,\mu\right)$ we have 
\begin{equation}\label{eq:heat-kernel-def}
P_{t}f\left(x\right)=\int_{M}f\left(y\right)p_M\left(x,y,t\right)d\mu\left(y\right)
\end{equation}
for $\mu-$almost every $x\in M$.
The heat kernel satisfies properties
such as Markovian, symmetry, Chapman-Kolmogorov and approximation
of identity properties. (See \cite{Grigoryan-Hu-2014} for more details)

}


Throughout this paper, we consider heat kernels $p_{M}(x,y,t)$ that satisfy a sub-Gaussian upper bound $\left(\UEF\right)$ (see Definition \ref{def-UE}) and measures $\mu$ that satisfy the volume doubling property (see Definition \ref{def-VD}). These assumptions are commonly encountered in the existing literature.
Let $\{X_t\}_{t\ge0}$  be a Hunt process generated by the Dirichlet form $\left(\mathcal{E},\mathcal{F}\right)$. For a domain $D\subset M$, we denote by $\tau_D$ the first exit time from $D$ of the process $X_t$. 
{   
The associated Dirichlet form for the killed
process is given by $\left(\mathcal{E},\mathcal{F}\left(D\right)\right)$
where $\mathcal{F}\left(D\right)$ is the $\mathcal{E}_{1}$-closure
of $\mathcal{F}\cap C_{c}\left(D\right)$ in $\mathcal{F}$.
We refer the reader to Section \ref{sec:A} for all relevant definitions, assumptions and the general setting. 
}

The  main result of this paper is a sharp estimate on the survival probability of $X_t$ outside of an invisible set $\mathcal{N}\subset M$ (of measure $0$ and almost surely unreachable by $X_t$, see \eqref{eq-prop-acces}).

\begin{thm*}[Simplified Version of Theorem \ref{thm:Prob-Estimate}]\label{MainTheorem:Simplified}
Suppose the following conditions are satisfied: sub-Gaussian upper bounds $\left(\UEF\right)$  for $(M, d, \mu, \mathcal{E})$  and a volume doubling property $\left(\VD\right)$ for $\mu$.

Then there exists a properly exceptional set $\mathcal{N}\subset M$  and a uniform constant
$C_{M}>0$ such that for any $\epsilon\in(0,1)$ and any domain $D\subset M$ satisfying
$\lambda(D)>0$ we have
\begin{equation*}
\sup_{x\in D\backslash\mathcal{N}}\mathbb{P}_{x}\left(\tau_{D}>t\right)\leq  C_M\left(1+\frac{1}{\epsilon} \right)^{d'}e^{-(1-\epsilon ) \lambda(D)t},
\end{equation*}
for any $t>0$, where $d'=\frac{\beta^{\prime}-1}{\beta}\alpha\vee \frac{\alpha}{\beta}$ and $\alpha,\beta,\beta^\prime$ are constants found in the volume doubling $\left(\VD\right)$, the heat kernel estimate $\left(\UEF\right)$ and assumptions \eqref{eq:F-regularity}.

\end{thm*}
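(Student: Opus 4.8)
The plan is to derive the bound from the spectral theorem applied to the Dirichlet heat semigroup $P_t^D$, combined with a pointwise on-diagonal heat kernel estimate that comes from the volume doubling property and the sub-Gaussian upper bound $\UEF$. The key identity is
\[
\mathbb{P}_x(\tau_D > t) = P_t^D \irv_D(x) = \int_D p_D(x,y,t)\,d\mu(y),
\]
where $p_D$ is the Dirichlet heat kernel on $D$. The strategy splits $t$ as $t = s + (t-s)$ for a parameter $s\in(0,t)$ chosen at the end, and uses the semigroup property together with self-adjointness: writing $P_t^D = P_s^D P_{t-s}^D$ and applying Cauchy--Schwarz,
\[
\mathbb{P}_x(\tau_D>t) = \big(P_s^D\, P_{t-s}^D \irv_D\big)(x) \le \|p_D(x,\cdot,s)\|_{L^2(D)}\cdot \|P_{t-s}^D\irv_D\|_{L^2(D)}.
\]
The second factor is controlled by the spectral bound: since the bottom of the $L^2(D)$-spectrum of $-\tfrac12\Delta_D$ (or its Dirichlet-form analogue) is $\lambda(D)$, one has $\|P_{t-s}^D\irv_D\|_{L^2(D)} \le e^{-\lambda(D)(t-s)}\|\irv_D\|_{L^2(D)}$ — but $\|\irv_D\|_{L^2}$ may be infinite, so instead I would iterate: use $\|P_{t-s}^D f\|_2 \le e^{-\lambda(D)(t-s)}\|f\|_2$ with $f = P_{\sigma}^D\irv_D$ for some earlier split point, bootstrapping from an $L^1 \to L^2$ bound. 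Concretely, the cleaner route is the standard trick $\mathbb{P}_x(\tau_D>t)=\langle p_D(x,\cdot,t/2), p_D(x,\cdot,t/2)\rangle^{1/2}\cdot(\ldots)$ is not quite it either; rather, use $\mathbb{P}_x(\tau_D>t) \le \|p_D(x,\cdot,t/2)\|_2 \cdot e^{-\lambda(D)(t/2 - s)} \cdot \|p_D(x,\cdot,s)\|_2^{?}$ — the robust version is: for $f\ge 0$, $\|P_u^D f\|_\infty \le \sup_x \|p_D(x,\cdot,u/2)\|_2 \cdot \|f\|_2$ and $\|P_u^D f\|_2 \le e^{-\lambda(D)u}\|f\|_2$.

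**The core computation** therefore reduces to bounding $\sup_{x\in D\setminus\mathcal{N}}\|p_D(x,\cdot,s)\|_{L^2(D)}^2 = \sup_x p_D(x,x,2s) \le \sup_x p_M(x,x,2s)$, using $p_D \le p_M$. Here is where $\UEF$ and $\VD$ enter: the on-diagonal sub-Gaussian upper bound gives $p_M(x,x,r) \lesssim \frac{1}{\mu(B(x,F^{-1}(r)))}$, and volume doubling (with the regularity assumption \eqref{eq:F-regularity} on the scale function $F$) converts this into a polynomial-in-$1/r$ bound with exponent governed by $\alpha,\beta,\beta'$ — this is exactly where the exponent $d'=\frac{\beta'-1}{\beta}\alpha \vee \frac{\alpha}{\beta}$ is produced. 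Then I assemble: choosing $s = \epsilon t$ (up to constants),
\[
\mathbb{P}_x(\tau_D>t) \le \Big(\sup_x p_M(x,x,2s)\Big)^{1/2}\cdot e^{-\lambda(D)(t-2s)/2}\cdot(\text{another factor handled by iterating to }L^1),
\]
and the polynomial prefactor in $1/s = 1/(\epsilon t)$ combined with the reference volume produces $(1+1/\epsilon)^{d'}$ after absorbing $t$-powers into the exponential via the elementary inequality $u^{d'}e^{-cu}\le (d'/ec)^{d'}$; the leftover $\lambda(D)^{d'}$-type factors are similarly absorbed, leaving a clean $C_M(1+1/\epsilon)^{d'}e^{-(1-\epsilon)\lambda(D)t}$.

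**The main obstacle** I anticipate is twofold. First, the measure-theoretic bookkeeping: since $D$ need not have discrete spectrum and $\mu(D)$ may be infinite, one cannot naively feed $\irv_D$ into $L^2$ estimates; the fix is the classical $L^1\!\to\!L^2\!\to\!L^\infty$ ultracontractivity chaining, splitting $t$ into three (or more) pieces so that the first piece maps $L^\infty\to$ a finite $L^2$ quantity via the diagonal kernel bound, the middle pieces decay at rate $\lambda(D)$, and the last maps back to $L^\infty$. Second, and more delicate, is the passage from the hypotheses to a genuine pointwise on-diagonal bound holding for \emph{every} $x$ outside a single properly exceptional set $\mathcal{N}$ independent of $D$: $\UEF$ as stated may give the heat kernel bound $\mu\otimes\mu$-a.e., and one must invoke the local regularity / continuity of $p_M$ (or a standard argument producing the exceptional set, as referenced near \eqref{eq-prop-acces}) to upgrade to the stated supremum over $D\setminus\mathcal{N}$. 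Verifying that $\mathcal{N}$ can be chosen uniformly — depending only on $(M,d,\mu,\mathcal{E})$ and not on $D$ — is the technical heart; everything else is the optimization of $s$ and the elementary inequality $\sup_{u>0}u^a e^{-bu} = (a/eb)^a$ used to trade polynomial growth in $t$ for a sub-exponential loss $(1-\epsilon)$ in the rate.
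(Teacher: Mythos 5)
Your plan breaks down at the two steps you treat as routine. First, the chaining $L^\infty\to L^2\to L^2\to L^\infty$ that you propose in order to handle $\irv_D$ cannot be carried out on the domains the theorem is actually meant to cover: the statement allows $\mu(D)=\infty$ (e.g.\ an infinite slab), and then neither $\irv_D$ nor $P_s^D\irv_D$ lies in $L^2(D)$ --- indeed $P_s^D\irv_D(x)=\mathbb{P}_x(\tau_D>s)$ does not decay along the unbounded directions. The on-diagonal kernel bound controls $\|P_s^D\|_{L^1\to L^2}$ (equivalently $L^2\to L^\infty$), never $\|P_s^D\|_{L^\infty\to L^2}=\|P_s^D\|_{L^2\to L^1}$, which is genuinely infinite for such $D$; so your claim that ``the first piece maps $L^\infty$ to a finite $L^2$ quantity via the diagonal kernel bound'' fails, and the unspecified ``another factor handled by iterating to $L^1$'' is exactly the missing ingredient (dimensionally it must carry a factor of volume$^{1/2}$, as the ball factor does in the classical Euclidean proof via a spatial cutoff, which you never introduce). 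Second, your reduction to $\sup_{x} p_M(x,x,2s)$ is not available in this generality: $\left(\VD\right)$ only controls ratios $V(x,R)/V(x,r)$ at a common center, not a uniform lower bound on $V(x,\mathcal{R}(2s))$, so $\sup_x p_M(x,x,2s)\lesssim\sup_x V(x,\mathcal{R}(2s))^{-1}$ may be infinite, and the ``polynomial-in-$1/r$'' on-diagonal bound you assert does not follow from doubling.

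The paper's proof of Theorem \ref{thm:Prob-Estimate} is built precisely to avoid both obstacles: for each fixed $x$ it splits, by H\"older with exponents $2/\epsilon$ and $2/(2-\epsilon)$,
\[
\int_D p_D(x,y,t)\,d\mu(y)\le\Bigl(\int_D p_D(x,y,t)^2\,d\mu\Bigr)^{\epsilon/2}\Bigl(\int_D p_D(x,y,t)^{\frac{2(1-\epsilon)}{2-\epsilon}}\,d\mu\Bigr)^{\frac{2-\epsilon}{2}}.
\]
The square term carries the spectral decay through the spectral-resolution estimate (Lemma \ref{lem:OnDiag-Eige}) and is bounded by $e^{-2(1-\delta)\lambda(D)t}\,\Cue/V(x,\mathcal{R}(\delta t))$, while the sub-unit power term is integrated over all of $M$ (not $D$) using the sub-Gaussian tail plus doubling and, after raising to the power $\tfrac{2-\epsilon}{2}$, contributes $C_\epsilon\,V(x,\mathcal{R}(t))^{\epsilon/2}$. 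The volume factors thus only ever appear as the ratio $V(x,\mathcal{R}(t))/V(x,\mathcal{R}(\delta t))$, which $\left(\VD\right)$ and \eqref{eq:F-regularity} bound uniformly in $x$; no uniform lower volume bound and no finiteness of $\mu(D)$ are needed, which is what makes the constant domain-independent and allows the exhaustion $D=\bigcup_n D_n$ and the construction of a single properly exceptional set $\mathcal{N}$ (via quasi-continuity of $x\mapsto\mathbb{P}_x(\tau_\Omega>t)$ and a countable family of finite unions of rational balls) --- the one part of your outline that is essentially aligned with the paper. To salvage your route you would need at least a spatial cutoff (Cauchy--Schwarz on $D\cap B(x,R)$ plus a tail estimate) in place of operator-norm chaining, and you would still have to keep every volume factor as a ratio centered at $x$.
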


As a direct consequence of Theorem \ref{thm:Prob-Estimate} we prove in Corollary \ref{Cor:polynomial-bound} the following estimate
\begin{equation}\label{intro-Est1}
\sup_{x\in D\setminus \mathcal{N}} \P_x(\tau_D>t)  \le K \left(1+\frac{2\lambda(D)}{d'} \,t\right)^{d'}e^{-\lambda(D)t}.
\end{equation}
This bound is an asymptotically sharp spectral upper bound because it has the correct exponential rate of decay as shown in the asymptotic \eqref{intro:asymp} and lower bound \eqref{Intro:Lower}. Sharp estimates for $\mathbb{P}_{x}\left(\tau_{D}>t\right)$
or equivalently $\left\Vert P_{t}\right\Vert _{\infty\to\infty}$ in large time have been studied in the literature for various operators such as elliptic and Schr\"{o}dinger operators in $\mathbb{R}^d$. For example, the estimate of the form
\begin{equation}\label{intro-Est2}
\mathbb{P}_{x}\left(\tau_{D}>t\right)\leq C_{d}\left(1+t\right)^{d/2}e^{-\lambda\left(D\right)t}
\end{equation}
for Brownian motion in Euclidean space is well-known (see \cite[Theorem 3.1.2]{Sznitman-1998} or \cite{Simon-1981}), and have since been improved for various cases.   Our main Theorem gives us sharp bounds for the survival probability in a general setting. When considering Brownian motion in $\mathbb{R}^d$ we have that $\alpha=d$ and $\beta=\beta^\prime=2$. Note that the exponent of $d^\prime$ in \eqref{intro-Est1} matches the exponent of $d/2$ of the polynomial term in the classical result of \eqref{intro-Est2}. When $\beta=\beta^\prime>2$ this situation covers many examples including graphs and fractals that have been extensively studied in the literature. See the examples in Section \ref{sec:fractals}.

The above theorem leads us to several interesting results which we list below.

\begin{enumerate}
\item \textbf{Spectral Bounds for Mean Exit Times} (Proposition  \ref{Cor:main_product_bound}): For any $p>0,$ there exists a constant $C_{p}\geq\Gamma\left(p+1\right)$
such that for every domain $D\subset M$ satisfying $\lambda\left(D\right)>0$,
we have
\[
\Gamma\left(p+1\right)\leq\lambda\left(D\right)^{p}\cdot\esup_{x\in D}\mathbb{E}_{x}\left[\tau_{D}^{p}\right]\leq C_{p}.
\] Further, this implies that for any domain $D\subset M$,
\[
\lambda\left(D\right)>0\text{ if and only if }\esup_{x\in D}\mathbb{E}_{x}\left[\tau_{D}\right]<\infty.
\]

\item \textbf{Lower Bound for Survival Probability} (Proposition \ref{Spectral_Lower_Bound})
Assume the metric measure space $\left(M,d,\mu\right)$ is endowed
with a regular, (not necessarily local) Dirichlet form $\left(\mathcal{E},\mathcal{F}\right)$. Then 
\begin{equation}\label{Intro:Lower}
e^{-\lambda\left(D\right)t}\leq\esup_{x\in D}\mathbb{P}_x\left(\tau_{D}>t\right).
\end{equation}

\item \textbf{Large Time Asymptotic for Survival Probability} (Proposition \ref{thm:Main_Asymptotic}):  Further assume that the Dirichlet form $\left(\mathcal{E},\mathcal{F}(D)\right) $ is irreducible (see Definition \ref{def:irreducible}), then there exists a properly exceptional set $\mathcal{N}\subset M$ such that for every $D\subset M$,
\begin{equation}\label{intro:asymp}
\lim_{t\to\infty}\frac{\log\mathbb{P}_{x}\left(\tau_{D}>t\right)}{t}=-\lambda\left(D\right)
\end{equation}
for every $x\in D\setminus\mathcal{N}$.

\item  { \textbf{Estimate for the Hot Spots Constant for Riemannian manifolds} (Proposition \ref{thm:Hot-Spots}):  Let $M$ be a Riemannian manifold with $\mbox{Ric}\geq 0$. For domains $D\subset M$ , let $\lambda_{1}\left(D\right),\mu_{2}\left(D\right)$
be the first non-trivial Dirichlet and Neumann eigenvalues. Let $\varphi_{2}$
be a Neumann eigenfunction for $\mu_{2}$. Fix $\hs\in\left (0,1\right)$. Let $\mathcal{D}_\hs$ be a class of bounded smooth domains $D\subset M$ such that 
$
\mu_{2}\left(D\right)\leq \hs\lambda_{1}(D),
$
for all $D\in \mathcal{D}_\hs$.
Then there exists a universal constant $C_{\mathcal{D}_\hs}\geq1$ (called the \textit{Hot Spots Constant} for $\mathcal{D}_\hs$) 
such that 
\[
\sup_{x\in D}\varphi_{2}\left(x\right)\leq C_{\mathcal{D}_\hs}\sup_{x\in\partial D}\varphi_{2}\left(x\right).
\]}

Note that the constant $C_{\mathcal{D}_\hs}$  holds uniformly for the class $\mathcal{D}_\hs$. This result is a maximum principle-type theorem for the first nontrivial Neumann eigenfunctions of the Laplace-Beltrami operator.  

\item \textbf{Characterization of heat kernel upper bounds} (Proposition \ref{MainThm2}): Another application of Theorem \ref{thm:Prob-Estimate} is a new set of conditions that {characterize sub-Gaussian heat kernel upper bounds}  $\left(\UEF\right)$: When $F(r)=r^\beta$, we obtain that the  sub-Gaussian upper estimate $\left(\UE\right)$ (see Definition \ref{def-UE})
\begin{equation*}
p_{M}(x,y,t)\leq\frac{\Cue}{V\left(x,t^{1/\beta}\right)}\exp\left(-\cue\left(\frac{d\left(x,y\right)^{\beta}}{t}\right)^{\frac{1}{\beta-1}}\right),
\end{equation*}
is equivalent to a regular growth rate of $\lambda\left(B(x,r)\right)\asymp r^{-\beta}$ for balls, a Faber-Krahn inequality and a Harnack-type inequality for the mean exit time. 

\end{enumerate}

Our proof depends on a precise analysis of the heat kernel and may be generalized and adapted for heat kernels satisfying non-Gaussian bounds, for instance for non-local operators. 
We also remark that the conditions assumed in Theorem \ref{thm:Prob-Estimate} and Proposition \ref{MainThm2}, Proposition   \ref{Cor:main_product_bound} and \ref{thm:Main_Asymptotic}   are general as we do not assume that the heat kernel is {\it a priori} continuous. Also note that we work with Dirichlet forms $\left(\mathcal{E},\mathcal{F}\left(D\right)\right)$ on domains $D$ for which their generators $\mathcal{L}_D$ does not necessarily have a discrete spectrum. 
Hence our results are applicable to general domains that need not to be bounded, or have any boundary regularity, or even necessarily have finite measure. The only assumption needed on domains  for most of our results is that $\lambda\left(D\right)>0$, or equivalently that $\esup_{x\in D}\mathbb{E}_{x}\left[\tau_{D}\right] <\infty$. Examples of domains where $\lambda(D)>0$ but $\mu(D)=\infty$ include the infinite slab $D=(-1,1)\times \mathbb{R}^{d-1}\subset\R^d$ and many others (see  \cite{Boudabra-Markowsky-2020,Mariano-Panzo-2020,Lifshits-Nazarov-2019} for example).  Moreover, due to our characterization in Proposition \ref{Cor:main_product_bound} and Remark \ref{rem:positive-spectrum-ex}, bounded domains and some domains of finite measure always satisfies $\lambda\left(D\right)>0$.

The paper is organized as follows. In Section \ref{sec:A}, we provide the setting and assumptions used throughout the paper. In Section \ref{sec:3-MainResults} we state the main result on the sharp upper bound for the survival probability. In Section \ref{sec:Mean-Lower} we obtain sharp lower bound for the survival probability as well as mean exit time estimates. Section \ref{sec:smallball} is devoted to exit time asymptotics as $t\to \infty$. In Sections \ref{sec:HotSpots} and \ref{sec:3-MainResults-b} we give applications of the spectral upper bound. This includes  obtaining a bound for the Hot Spots constant on Riemannian manifolds (see Section \ref{sec:HotSpots}), and a new characterization of a heat kernel upper bound (see Section \ref{sec:3-MainResults-b}). Lastly in Section \ref{sec:7-Applications} we give examples where our results apply such as Carnot groups, sub-Riemannian manifolds with transverse symmetries, fractals and fractal-like manifolds.


\section{Setting and Assumptions}\label{sec:A}

In this section we describe the setting on a general metric measure space and the assumptions that shall be used throughout the paper. Most of the exposition, definitions and assumptions follow the works of \cite{Besov2-2020} and \cite{Grigoryan-Hu-2014,Kigami-2004}. 

\subsection{Metric measure space}
Let $\left(M,d,\mu\right)$ be a metric measure space. We assume that
$\left(M,d\right)$ is a locally compact separable connected metric
space and that the measure $\mu$ is a Radon measure with full support
on $M$. As usual, we define $B(x,r)$ a ball of radius $r$ centered
at $x$ and set $V\left(x,r\right):=\mu\left(B\left(x,r\right)\right)$. We assume that $0<V(x,r)<\infty$ for all $x\in M$ and all $r>0$. 

\begin{df}[\textbf{VD}]\label{def-VD} The metric measure space $(M,d,\mu)$ is said to satisfy the  \emph{volume doubling property} if there is a constant $C_{VD}\geq1$
such that
\begin{equation}
V(x,2r)\leq C_{VD}V(x,r)\label{eq:VD}
\end{equation}
for all $x\in M$ and $r>0$. 
\end{df}
It is known that (\textbf{VD}) implies that there exists $\alpha>0$ and $C_\alpha>0$ such
that 
\begin{equation}
\frac{V(x,R)}{V(y,r)}\leq C_{\alpha}\left(\frac{d(x,y)+R}{r}\right)^{\alpha}\label{eq:Vd-alpha}
\end{equation}
for all $x,y\in M$ and for all $0<r\leq R$ (see \cite[Proposition 5.1]{Grigoryan-Hu-2014}). We will mostly use this interpretation of volume doubling in our estimates.  For this reason we define the following equivalent condition:

\begin{df}[$\VD$]\label{def-VD-alpha} The metric measure space $(M,d,\mu)$ is said to satisfy the volume doubling property with parameter $\alpha$ if \eqref{eq:Vd-alpha} holds for all $x,y\in M$ and for all $0<r\leq R$. 
\end{df}

 \subsection{Heat semigroup, heat kernel and the bottom of the spectrum}\label{SecA:Semigroup}
Assume  a local regular Dirichlet form $\left(\mathcal{E},\mathcal{F}\right)$ on $L^{2}\left(M,\mu\right)$. Recall that a Dirichlet form is {local} if $\mathcal{E}\left(f,g\right)=0$ for
any $f,g\in\mathcal{F}$ with disjoint compact supports; And the Dirichlet form is said to be {regular} if $\mathcal{F}\cap C_{0}\left(M\right)$
is dense   {in $\mathcal{F}$ with respect to $\mathcal{E}_{1}$-norm and in $C_{0}\left(M\right)$
with respect to uniform norm.}  Here $C_{0}(M)$
denotes the space of continuous functions with compact support in $M$
 equipped with the sup-norm. 
Let $\left\{ P_{t}\right\} _{t\geq0}$ be the associated strongly continuous semigroup. Let $\mathcal{L}$ be the generator of the heat semigroup. It is  a non-negative definite self-adjoint operator 
on $L^{2}\left(M,\mu\right)$ such that 
\[
\mathcal{E}\left(f,g\right)=\left(\mathcal{L}f,g\right)
\]
{   for $f\in\mathcal{D}\left(\mathcal{L}\right)$, and $g\in\mathcal{F}$.} 
 Let us emphasize that we do not assume {\it a priori} that
$\left\{ P_{t}\right\} _{t\geq0}$ is ultracontractive. Recall that ultracontractivity means the operator can be extended to a bounded operator from $L^{2}(M,\mu)$
to $L^{\infty}(M,\mu)$ hence admits a heat kernel $p_{M}$ for any $t>0$.

In order to introduce the rest of the assumptions that are needed in this paper, from now on we fix a \emph{parameter function} $F:\left(0,\infty\right)\to\left(0,\infty\right)$
that is a continuous increasing bijection in $(0,\infty)$ satisfying
\begin{equation}
C_{F}^{-1}\left(\frac{R}{r}\right)^{\beta}\leq\frac{F\left(R\right)}{F\left(r\right)}\leq C_F\left(\frac{R}{r}\right)^{\beta^{\prime}},\quad 0<r\leq R \label{eq:F-regularity}
\end{equation}
 for some $1\leq\beta\leq\beta^{\prime}$ and $C_F\geq 1$. Moreover, define
\begin{equation}\label{eq:R}
\mathcal{R}(t):=F^{-1}\left(t\right).
\end{equation}
It is easy to see that $\mathcal{R}$ satisfies the following,
\begin{equation}
C_{F}^{-1/\beta^{\prime}}\left(\frac{T}{t}\right)^{1/\beta^{\prime}}\leq\frac{\mathcal{R}\left(T\right)}{\mathcal{R}\left(t\right)}\leq C_{F}^{1/\beta}\left(\frac{T}{t}\right)^{1/\beta},\quad 0<t\leq T.\label{eq:R-regularity}
\end{equation}

The following assumptions will only be  stated in the theorems when they are assumed. 

\begin{df}[$\UEF$]\label{def-UE}
The metric measure space $(M, d,\mu)$ endowed with a Dirichlet form $(\cE, \cF)$ is said to satisfy $\left(\UEF\right)$ if it admits a heat kernel $p_M$ which satisfies a \emph{sub-Gaussian upper estimate} with parameter function $F$. Namely there exists constants $\Cue,\cue>0$ such that  
\begin{equation}
p_{M}\left(x,y,t\right)\leq\frac{C_{\text{UE}}}{V\left(x,\mathcal{R}\left(t\right)\right)}\exp\left(-\frac{1}{2}t\Phi\left(c_{\text{UE}}\frac{d\left(x,y\right)}{t}\right)\right),\label{UE}
\end{equation}
for all $t>0$ and for almost every $x,y\in M$, where 
\[
\Phi\left(s\right)=\sup_{r>0}\left\{ \frac{s}{r}-\frac{1}{F\left(r\right)}\right\}.
\]
We say $\left(\UE\right)$ is satisfied when $F(r)=r^{\beta}$. In this case, the estimate becomes
\begin{equation}
p_{M}(x,y,t)\leq\frac{\Cue}{V\left(x,t^{1/\beta}\right)}\exp\left(-\cue\left(\frac{d\left(x,y\right)^{\beta}}{t}\right)^{\frac{1}{\beta-1}}\right).\label{UEb}
\end{equation}
\end{df}

Note that $\beta$ here is often called the walk dimension in many literature on fractals. 
Moreover, note that we do not require continuity of the heat kernel in our proofs. 

\begin{df}[$\ULE$]\label{def-ULE}
The metric measure space $(M, d,\mu)$ endowed with a Dirichlet form $(\cE, \cF)$ is said to satisfy $\left(\ULE\right)$  if it admits a  heat kernel  $p_{M}\left(x,y,t\right)$ and satisfies a
\emph{sub-Gaussian upper and lower estimates} with parameter $\beta>1$. Namely there exist  $\Cue,\Cle>0$ and $\cle\geq \cue>0$ such that 
\begin{align}
\frac{\Cle}{V\left(x,t^{1/\beta}\right)}\exp\left(-\cle\left(\frac{d\left(x,y\right)^{\beta}}{t}\right)^{\frac{1}{\beta-1}}\right) & \leq p_{M}\left(x,y,t\right)\nonumber \\
 & \leq\frac{\Cue}{V\left(x,t^{1/\beta}\right)}\exp\left(-\cue\left(\frac{d\left(x,y\right)^{\beta}}{t}\right)^{\frac{1}{\beta-1}}\right)\label{ULE}
\end{align}
for all $t>0$ and almost every $x,y\in M$.
\end{df}
Clearly $\left(\ULE\right)$ implies $\left(\UEF\right)$ with $F(r)=r^{\beta}$. Many of our examples in Section \ref{sec:7-Applications} satisfy $\left(\ULE\right)$.


For any non-empty open set $D\subset M$, let $C_{0}(D)$ be the space of
continuous functions with compact support in $D$ and equipped with the sup-norm. 
Let $\mathcal{F}(D)$ be the closure of $\mathcal{F}\cap C_{0}\left(D\right)$
in the norm of $\mathcal{F}$. We denote by $\mathcal{L}_{D}$
the generator of $\left(\mathcal{E},\mathcal{F}(D)\right)$, and by $P_{t}^{D}$ the related heat semigroup. The \textit{bottom of the spectrum} of $\mathcal{L}_D$ is then defined by
\begin{equation}\label{eq-bott-spec}
\lambda(D):=\inf\text{spec}\mathcal{L}_{D}=\inf_{u\in\mathcal{F}(D)\backslash\{0\}}\frac{\mathcal{E}(u)}{\left\Vert u\right\Vert _{2}^{2}}.
\end{equation}
When $\mathcal{L}_D$ has a discrete spectrum,  $\lambda(D)$  is the {first Dirichlet eigenvalue}.
Clearly $\lambda(D)\geq0$ and domain monotonicity holds for $\lambda(D)$.

\subsection{Diffusion and exit times}\label{Sec:Diff}
Given any Dirichlet metric measure space $(M, d, \mu, \mathcal{E})$, it is known that there always exist a {Hunt process} associated with $\left(\mathcal{E},\mathcal{F}\right)$ on $L^{2}\left(M,\mu\right)$ (see \cite{Fukushima-book-ed1-1994} for more details on Hunt processes). Note that the Hunt process is not unique. Therefore we shall fix one process and denote it by $\left(\left\{ X_{t}\right\} _{t>0},\left\{ \mathbb{P}_{x}\right\} _{x\in M}\right)$ for the rest of this paper. Given that  $\left(\mathcal{E},\mathcal{F}\right)$ is local,  the Hunt process  $X_t$, $t>0$ is a {diffusion}, namely  the sample paths $t\mapsto X_t$ are continuous almost surely. Given any bounded Borel function $f$ on $M$,  set 
\[
\mathcal{P}_{t}f\left(x\right):=\mathbb{E}_{x}\left[f\left(X_{t}\right)\right],\,\,\,\,\,t>0, \quad x\in M
\]
Note that $\mathcal{P}_{t}f$ is defined pointwise for all $x\in M$.  The connection between
$P_{t}$ and $\mathcal{P}_{t}$ is that 
\[
    P_{t}f\left(x\right)=\mathcal{P}_{t}f\left(x\right)
\]
holds for $\mu$--almost every $x\in M$. 

We say that  $\left(\mathcal{E},\mathcal{F}\right)$
is {conservative} (or equivalently $X_t$, $t\ge0$ is {stochastically complete}) if and only if 
\[
\mathbb{P}_{x}\left(X_{t}\in M\right)=1
\]
for all $t>0$ and $x\in M$, so that there is no cemetery point. 

We say that a Borel set $\mathcal{N}\subset M$ is \textit{properly
exceptional} if for all $x\in M\backslash\mathcal{N}$,
\begin{equation}\label{eq-prop-acces}
\mu\left(\mathcal{N}\right)=0\quad\text{and}\quad \mathbb{P}_{x}\left(X_{t}\in\mathcal{N}\text{ for some }t\geq0\right)=0.
\end{equation}
Sometimes such sets are called \textit{invisible} sets. (see \cite{Fukushima-book-ed1-1994,Grigoryan-Hu-2014})

For an open set $D\subset M$, define the {first exit time
}from $D$ (or the {lifetime} of $X_{t}$ in $D$) by 
\[
\tau_{D}:=\inf\left\{ t>0:X_{t}\notin D\right\} .
\]
We call $\left(\left\{ X_{t}\right\} _{t\ge0},\left\{ \mathbb{P}_{x}\right\} _{x\in M}\right)$ the Hunt process associated with the Dirichlet form $\left(\mathcal{E},\mathcal{F}\left(D\right)\right)$   {which is regular due to \cite[Theorem 3.3]{Grigoryan-Hu-Lau-2014}} (see also  \cite{Fukushima-book-ed1-1994}),
by imposing killing upon exiting $D$. This process has a transition
function  which is given by 
$
\mathbb{P}_{x}\left(X_{t}\in A,\tau_{D}>t\right)
$
where $x\in M$ and $A\subset M$.  For any bounded Borel function
$f$, we can thus define the semigroup operator
\begin{equation}\label{eq:prob-killed-heat-kernel}
\mathcal{P}_{t}^{D}f\left(x\right):=\mathbb{E}_{x}\left[f\left(X_{t}\right)1_{\left(\tau_{D}>t\right)}\right],\quad x\in M,\ t>0,    
\end{equation}
which can be extended for all bounded or non-negative Borel functions. In fact, for such functions it holds that
\begin{align}\label{eq-ident}
P_{t}^{D}f\left(x\right)=\mathcal{P}_{t}^{D}f\left(x\right)
\end{align}
for $\mu-$almost every $x\in M$. 

{    
\begin{remark}
We point out that the equality \eqref{eq-ident} plays a key role in the proof of our main Theorem \ref{thm:Prob-Estimate}, in which we use an almost sure heat kernel bound $\UEF$, see \eqref{UE}, to give estimates for an everywhere defined quantity $\mathbb{P}_x(\tau_D>t)$.
The gap between these two definitions of the heat semigroups $P_{t}^D$ and $\mathcal{P}_t^D$,  
are addressed in Step 3 of the proof of Theorem \ref{thm:Prob-Estimate}. 
\end{remark}

}

\section{Sharp Upper Bound for the Survival Probability \label{sec:3-MainResults}}


In this section we present the main result that provides a sharp spectral estimate on the survival probability of $X_t, t\ge0$ inside a given domain. 
Results regarding the large time behavior of $\mathbb{P}_{x}\left(\tau_{D}>t\right)$
or $\left\Vert P_{t}\right\Vert _{\infty\to\infty}$ for large $t$
have been deeply studied in the literature. For example, it is well
known that for Brownian motion in $\R^d$, the survival probability can be bounded
by $C\left(1+\lambda (D) t \right)^{d/2}e^{-\lambda\left(D\right)t}$ (see \cite[Theorem 3.1.2]{Sznitman-1998} or \cite{Simon-1981}). Sharper results for large time $t>0$ have since been obtained for operators in the Euclidean setting such as in \cite{Ouhabaz-2006,Vogt-2019a}. For example, the bound of $C\left(1+\lambda (D) t\right)^{d/4}e^{-\lambda\left(D\right)t}$ was  obtained in \cite{Vogt-2019a} for operators in $\mathbb{R}^d$ satisfying Gaussian bounds. As mentioned in the introduction, our result will match  at least the classical bounds of $C\left(1+t\right)^{d/2}e^{-\lambda\left(D\right)t}$ for elliptic operators, which were not previously known in this generality.


\begin{theorem}
\label{thm:Prob-Estimate}
{Let $\alpha, \alpha'>0$ and $F$ be a parameter function satisfying \eqref{eq:F-regularity} with $\beta,\beta^{\prime}>0$}. Assume the metric measure space $\left(M,d,\mu\right)$ is endowed with a regular, local Dirichlet form $\left(\mathcal{E},\mathcal{F}\right)$, and satisfies  $\left(\VD\right)$ and $\left(\UEF\right)$.

Then there exists a properly exceptional set $\mathcal{N}\subset M$ and   a constant $C_M>0$ such that for every $\epsilon\in\left(0,1\right)$ 
and for all open sets $D\subset M$ satisfying $\lambda\left(D\right)>0$ we have
\begin{equation}\label{eq:Probet-N-set1}
\sup_{x\in D\backslash\mathcal{N}}\mathbb{P}_{x}\left(\tau_{D}>t\right)\leq  C_M\left(1+\frac{1}{\epsilon} \right)^{d'}e^{-(1-\epsilon ) \lambda(D)t}
\end{equation}
for any $t>0$, where $d'=\frac{\beta^{\prime}-1}{\beta}\alpha\vee \frac{\alpha}{\beta}$. The constant $C_M$ depends only on $ \alpha,\beta,\beta^{\prime}$ and the constants that appear in the assumptions.
\end{theorem}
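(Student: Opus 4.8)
The plan is to estimate the survival probability $\mathbb{P}_x(\tau_D > t)$ by relating it to the Dirichlet heat kernel $p^D_M$ and then exploiting the spectral gap $\lambda(D)$ together with the on-diagonal decay coming from $(\UEF)$. First I would recall that, by domain monotonicity and the killed semigroup, $\mathcal{P}^D_t 1(x) = \mathbb{P}_x(\tau_D > t)$ is defined pointwise, and that off a properly exceptional set $\mathcal{N}$ (chosen once and for all so that the heat kernel identities and the versions agree quasi-everywhere, as in \cite{Grigoryan-Hu-2014,Besov2-2020}) we may write $\mathcal{P}^D_t 1(x) \le \mathcal{P}_t 1(x)$ and, more importantly, use the semigroup property $\mathbb{P}_x(\tau_D > t) = \mathcal{P}^D_{t/2}(\mathbb{P}_\cdot(\tau_D > t/2))(x)$ to split $t = t_0 + s$ with $t_0$ a fixed ``warm-up'' time and $s$ the remaining time on which the spectral decay acts.

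The key mechanism: on the interval of length $s$, since $\lambda(D) > 0$ is the bottom of the spectrum of $\mathcal{L}_D$, the spectral theorem gives $\|P^D_s\|_{2\to 2} \le e^{-\lambda(D) s}$, hence $\|P^D_{t_0/2+s}\|_{1\to\infty} \le \|P^D_{t_0/4}\|_{1\to 2}\, e^{-\lambda(D)s}\, \|P^D_{t_0/4}\|_{2\to\infty}$, and the two outer factors are controlled by the on-diagonal bound $p_M(x,x,r) \le \Cue/V(x,\mathcal{R}(r))$ coming from $(\UEF)$. Then I would integrate: $\mathbb{P}_x(\tau_D > t) = \int_D p^D_M(x,y,t)\,d\mu(y)$, bound $p^D_M \le p_M$, and break the spatial integral into annuli $\{y : 2^{k} \mathcal{R}(t_0) \le d(x,y) < 2^{k+1}\mathcal{R}(t_0)\}$; on each annulus the Gaussian factor $\exp(-\tfrac12 t_0 \Phi(\cue d(x,y)/t_0))$ decays super-polynomially in $k$, while $(\VD)$ in the form \eqref{eq:Vd-alpha} makes $V(y,\cdot)/V(x,\cdot)$ grow only polynomially with exponent $\alpha$; summing the series produces a finite constant. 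The choice $t_0 = \mathcal{R}^{-1}$-scaled appropriately (e.g. $F(r)$ comparable to $\epsilon t$) is what generates the factor $(1+1/\epsilon)^{d'}$: the ratio $V(x,\mathcal{R}(t))/V(x,\mathcal{R}(\epsilon t))$ is bounded by a constant times $(1/\epsilon)^{\alpha/\beta}$ by \eqref{eq:R-regularity} and \eqref{eq:Vd-alpha}, and the factor $\|P^D_{t_0/4}\|_{1\to 2}^2$ contributes the exponent $\tfrac{\beta'-1}{\beta}\alpha$ through the interplay of $\mathcal{R}$-regularity with the sub-Gaussian kernel, hence $d' = \tfrac{\beta'-1}{\beta}\alpha \vee \tfrac{\alpha}{\beta}$. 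Writing $t = \epsilon t + (1-\epsilon)t$ and putting the warm-up on the first piece while keeping $e^{-(1-\epsilon)\lambda(D)t}$ from the second yields \eqref{eq:Probet-N-set1}.

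The main obstacle I anticipate is twofold. First, the bookkeeping of the exponents: tracking precisely how $(\VD)$, the $F$-regularity \eqref{eq:F-regularity}, and the $\mathcal{R}$-regularity \eqref{eq:R-regularity} combine so that the final polynomial exponent is exactly $d' = \tfrac{\beta'-1}{\beta}\alpha \vee \tfrac{\alpha}{\beta}$ and not something weaker — in particular the $\beta'$ (rather than $\beta$) appearing in one of the two terms reflects that the ``worst case'' scaling of $F$ enters when converting a time-$\epsilon t$ estimate into a ball radius, and this has to be done carefully with the correct one-sided inequalities. Second, the measure-theoretic care needed because the heat kernel is not assumed continuous: all pointwise manipulations (the annulus decomposition, the semigroup splitting, evaluating $p^D_M(x,x,\cdot)$) must be justified on $D \setminus \mathcal{N}$ for a single properly exceptional $\mathcal{N}$ independent of $D$ and $t$, which is why the Hunt-process formulation with $\mathcal{P}^D_t$ rather than the $L^2$ semigroup $P^D_t$ is used throughout, invoking the standard identification $P^D_t f = \mathcal{P}^D_t f$ $\mu$-a.e. and the existence of a common exceptional set. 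A minor additional point is ensuring the bound is stated for \emph{all} $t>0$, including small $t$, where the prefactor $C_M(1+1/\epsilon)^{d'}$ must absorb the trivial bound $\mathbb{P}_x(\tau_D>t)\le 1$; this is immediate since the right-hand side is bounded below by a positive constant on any bounded time interval once $\epsilon$ is fixed, but it should be noted.
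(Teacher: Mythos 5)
Your proposal assembles most of the right ingredients (the spectral $L^{2}\to L^{2}$ decay $\|P^{D}_{s}\|_{2\to2}\le e^{-\lambda(D)s}$ applied after a ``warm-up'' time, the on-diagonal consequence of $\left(\UEF\right)$, the spatial integration of the sub-Gaussian factor via annuli and $\left(\VD\right)$, and the need for one exceptional set $\mathcal{N}$ independent of $D$ and $t$), but the step that actually glues them together is missing, and without it the argument fails exactly for the domains the theorem is meant to cover, namely those with $\mu(D)=\infty$ and no discrete spectrum. Concretely, you produce two separate estimates: (i) a sup-type bound on the killed kernel, $p_{D}(x,y,t)\lesssim e^{-\lambda(D)s}\bigl(V(x,\mathcal{R}(t_{0}))V(y,\mathcal{R}(t_{0}))\bigr)^{-1/2}$, which carries the spectral decay but is \emph{not} integrable in $y$ over a domain of infinite measure; and (ii) the bound $p_{D}\le p_{M}$ integrated over annuli, which is integrable but carries no factor $e^{-(1-\epsilon)\lambda(D)t}$ at all. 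As written, you never combine them, and neither alone yields \eqref{eq:Probet-N-set1}. What is needed is an interpolation between the two: in the paper this is done by writing $p_{D}=p_{D}^{\epsilon}\,p_{D}^{1-\epsilon}$ and applying H\"older with exponents $2/\epsilon$ and $2/(2-\epsilon)$, so that $\int_{D}p_{D}(x,y,t)\,d\mu(y)\le I^{\epsilon/2}\,II^{(2-\epsilon)/2}$, where $I=\int_{D}p_{D}^{2}$ is controlled by the spectral warm-up lemma (Lemma \ref{lem:OnDiag-Eige}, which replaces your operator-norm factorization and keeps the $x$-dependence of $V(x,\mathcal{R}(\delta t))$) together with the on-diagonal bound, while $II=\int_{D}p_{D}^{2(1-\epsilon)/(2-\epsilon)}$ is bounded using $\left(\UEF\right)$, the lower bound \eqref{eq:Phi-lower-bound} for $\Phi$, Lemma \ref{prop:Change-of-Variables} and $\left(\VD\right)$; it is this second integral (whose constant blows up as the exponent degenerates), not the $\|P^{D}_{t_{0}/4}\|_{1\to2}$ factor, that produces the exponent $\frac{\beta'-1}{\beta}\alpha$ in $d'$, and a final reparametrization of $(\epsilon,\delta)$ yields the stated $(1+1/\epsilon)^{d'}$ prefactor. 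Your proposal contains no substitute for this interpolation, so the exponential and polynomial factors cannot both survive the spatial integration.

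Two secondary points. First, your factorization through $\|P^{D}_{t_{0}/4}\|_{2\to\infty}$ implicitly requires $\esup_{x}V(x,\mathcal{R}(t_{0}/4))^{-1}<\infty$, which does not follow from $\left(\VD\right)$; one must keep the base point $x$ fixed throughout, as the paper does. Second, asserting that $\mathcal{N}$ can be ``chosen once and for all'' is not enough: the quasi-continuity of $x\mapsto\mathbb{P}_{x}(\tau_{\Omega}>t)$ gives an exceptional set depending on $(\Omega,t,\epsilon)$, and making it uniform requires an explicit countability argument (finite unions of rational balls, rational $t$ and $\epsilon$, exhaustion $D=\cup_{k}D_{k}$ and monotonicity of $\lambda$), which also handles the existence of $p_{D}$ (via the Faber--Krahn inequality implied by $\left(\UEF\right)$) and the passage from finite-measure domains to general $D$ with $\lambda(D)>0$. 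These gaps are reparable, but the missing H\"older/interpolation step is the core of the proof and must be supplied.
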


\begin{corollary}\label{Cor:polynomial-bound}
Assume the metric measure space $\left(M,d,\mu\right)$ satisfies the assumptions as in Theorem \ref{thm:Prob-Estimate}. 
Then for all $t>0$, and all open sets $D\subset M$  satisfying $\lambda\left(D\right)>0$, 
\[
\sup_{x\in D\setminus \mathcal{N}} \P_x(\tau_D>t)  \le K \left(1+\frac{2\lambda(D)}{d'} \,t\right)^{d'}e^{-\lambda(D)t}.
\]
where $d'=\frac{\beta^\prime-1}{\beta}\alpha$ and $K>0$ is a constant that depends only on $ \alpha,\beta,\beta^{\prime}$ and the constants appear in the assumptions. 

\end{corollary}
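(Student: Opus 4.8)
The plan is to deduce the corollary from Theorem~\ref{thm:Prob-Estimate} simply by optimizing over the free parameter $\epsilon\in(0,1)$ appearing in \eqref{eq:Probet-N-set1}. Since $\lambda(D)>0$, for each fixed $t>0$ write $s:=\lambda(D)\,t>0$; then Theorem~\ref{thm:Prob-Estimate} gives, for every $\epsilon\in(0,1)$,
\[
\sup_{x\in D\setminus\mathcal{N}}\P_x(\tau_D>t)\le C_M\left(1+\frac{1}{\epsilon}\right)^{d'}e^{-(1-\epsilon)s}=C_M\left(1+\frac{1}{\epsilon}\right)^{d'}e^{\epsilon s}\,e^{-s}.
\]
Thus it suffices to pick $\epsilon$ making $(1+\tfrac1\epsilon)^{d'}e^{\epsilon s}$ bounded by a constant multiple of $(1+\tfrac{2s}{d'})^{d'}$.

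First I would take $\epsilon=\dfrac{d'}{d'+s}$, which lies in $(0,1)$ for every $s>0$. Then $\tfrac1\epsilon=1+\tfrac{s}{d'}$, so $1+\tfrac1\epsilon=2+\tfrac{s}{d'}\le 2\left(1+\tfrac{2s}{d'}\right)$, while $\epsilon s=\tfrac{d's}{d'+s}\le d'$, hence $e^{\epsilon s}\le e^{d'}$. Substituting into the display yields
\[
\sup_{x\in D\setminus\mathcal{N}}\P_x(\tau_D>t)\le C_M(2e)^{d'}\left(1+\frac{2\lambda(D)\,t}{d'}\right)^{d'}e^{-\lambda(D)\,t},
\]
which is the claimed inequality with $K:=C_M(2e)^{d'}$, depending only on $\alpha,\beta,\beta'$ and the constants in the hypotheses. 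If one prefers not to lose the factor $2^{d'}$ in the polynomial term, an alternative is to use $\epsilon=\tfrac{d'}{2s}$ when $2s>d'$ (which reproduces $1+\tfrac{2s}{d'}$ exactly, at the cost of $e^{d'/2}$) and, when $2s\le d'$, to invoke the trivial bound $\P_x(\tau_D>t)\le 1\le e^{d'/2}\left(1+\tfrac{2s}{d'}\right)^{d'}e^{-s}$; either route works.

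There is essentially no obstacle here: the argument is a single substitution into Theorem~\ref{thm:Prob-Estimate} followed by the elementary estimates $2+u\le 2(1+2u)$ and $\tfrac{d's}{d'+s}\le d'$ for $u,s\ge0$. The only point requiring a moment's care is that the ``energy-minimizing'' choice $\epsilon\propto 1/s$ exceeds $1$ when $s$ is small; this is circumvented either by the uniform choice $\epsilon=d'/(d'+s)$ above, or by splitting into the regimes $2s\le d'$ and $2s>d'$ and using $\P_x(\tau_D>t)\le 1$ in the first. One should also keep in mind that $d'$ in this corollary is the exponent delivered by Theorem~\ref{thm:Prob-Estimate}, which equals $\tfrac{\beta'-1}{\beta}\alpha$ in the range $\beta'\ge 2$ covering all of the examples treated in the paper, so no change of exponent is involved.
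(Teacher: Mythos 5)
Your proof is correct and follows essentially the same route as the paper's: both deduce the corollary by optimizing the free parameter in the bound of Theorem \ref{thm:Prob-Estimate} at scale $\epsilon\approx d'/(\lambda(D)t)$, your single uniform choice $\epsilon=d'/(d'+\lambda(D)t)$ simply replacing the paper's case split between $a=\lambda(D)t/d'\ge 1$ (where it takes $\sigma=1/a$) and $a<1$ (where it uses the trivial regime), at the harmless cost of the extra factor $(2e)^{d'}$ absorbed into $K$. The exponent issue you flag — the theorem gives $d'=\frac{\beta'-1}{\beta}\alpha\vee\frac{\alpha}{\beta}$ while the corollary's statement writes $d'=\frac{\beta'-1}{\beta}\alpha$ — is equally present in the paper's own proof (which also only yields the bound with the max), so your handling is consistent with theirs.
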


We devote the rest of this section into the proofs of Theorem \ref{thm:Prob-Estimate} and Corollary \ref{Cor:polynomial-bound}. The proof requires a novel robust technique that accounts for the general setting in the present paper. In particular it covers general (possibly unbounded) domains that do not necessarily have a discrete spectrum. In Section \ref{sec-prelim} we show some lemmas that serve as preliminaries. In Section \ref{sec-proof-main} we present the proof of Theorem \ref{thm:Prob-Estimate}, which is split into four steps. Lastly we present the proof of Corollary \ref{Cor:polynomial-bound} in Section \ref{sec-proof-cor}.



\subsection{Some preliminaries}\label{sec-prelim}

  {
If it exists, the \emph{Dirichlet heat kernel} $p_D\left(x,y,t\right)$, $x,y\in D$, $t\ge0$ of the Dirichlet form $\left(\mathcal{E},\mathcal{F}(D)\right)$
is  the integral kernel of the heat semigroup $\left\{ P_{t}^D\right\} _{t\geq0}$ in the sense of \eqref{eq:heat-kernel-def}.
}

We will need the following preliminary results. 

\begin{lemma}\label{lem:OnDiag-Eige}
Let $D\subset M$ be a domain and suppose its Dirichlet heat kernel $p_{D}$ exists. Assume that 
$p_{D}\left(\cdot,x,t\right)\in L^{2}\left(D\right)$
for a.e. $x\in D$ and $t>0$. Given that $\lambda(D)>0$ we have for any $\epsilon\in(0,1)$ that
\[
\int_{D}p_{D}\left(x,y,t\right)^{2}d\mu\left(y\right)\leq e^{-2\left(1-\epsilon\right)t\lambda(D)}\int_{D}p_{D}\left(x,y,\epsilon t\right)^{2}d\mu\left(y\right),
\]
for almost every $x\in D$. Moreover, if $p_{D}$ is known to be continuous
then for all $x\in D$,
\begin{align}\label{eq-on-diag-pt}
 p_{D}\left(x,x,t\right)\leq e^{-\left(1-\epsilon\right)\lambda(D)t}p_{D}\left(x,x,\epsilon t\right).   
\end{align}

%
\end{lemma}
\begin{proof}
Let $P_t^D$ be the associated heat semigroup and $\left\{ E_{\lambda}^{D}\right\} _{\lambda>0}$ the spectral
resolution  in $L^{2}(D)$. 
Then for any $f\in L^{2}\left(D\right)$ it holds that 
\begin{align*}
P_{t}^{D}f & =\int_{0}^{\infty}e^{-t\lambda}dE_{\lambda}^{D}f
\end{align*}
and
\begin{align}\label{eq-spec-meas}
\left\Vert P_{t}^{D}f\right\Vert _{2}^{2} & =\int_{0}^{\infty}e^{-2t\lambda}d\left\Vert E_{\lambda}^{D}f\right\Vert ^{2}
\end{align}
for all $t>0$. For $\mu-$ almost every $x\in D,s>0$ define the function $f_{s,x}(\cdot):=p_{D}\left(\cdot,x,s\right)$. Then by
Chapman-Kolmogorov we have
\[
P_{t}^{D}f_{s,x}=p_{D}\left(\cdot,x,t+s\right),
\]
for $\mu-$ almost every $x\in D$. Hence by \eqref{eq-spec-meas} we obtain that
\begin{align} \label{eq-spect-meas-a}
\left\Vert p_{D}\left(\cdot,x,t\right)\right\Vert _{2}^{2}=\left\Vert P^D_{ct}f_{\left(1-c\right)t,x}\right\Vert _{2}^{2}
=\int_{0}^{\infty}e^{-2\left(ct\right)\lambda}d\left\Vert E_{\lambda}^{D}f_{\left(1-c\right)t,x}\right\Vert ^{2}
 \end{align}
for any $t>0$ and $c\in (0,1)$.  Since $\lambda(D)>0$,  we have for any $\delta\in\left(0,1\right)$ that
\begin{align} \label{eq-spect-meas-b}
\int_{0}^{\infty}e^{-2\left(ct\right)\lambda}d\left\Vert E_{\lambda}^{D}f_{\left(1-c\right)t,x}\right\Vert ^{2}
&\le e^{-2\left(ct\right)\left(1-\delta\right)\lambda(D)}\int_{\lambda(D)}^{\infty}e^{-2\left(c\delta t\right)\lambda}d\left\Vert E_{\lambda}^Df_{\left(1-c\right)t,x}\right\Vert ^{2} \notag\\
&=e^{-2c\left(1-\delta\right)t\lambda(D)}\left\Vert P^D_{c\delta t}f_{\left(1-c\right)t,x}\right\Vert _{2}^{2} \nonumber\\
& =e^{-2c\left(1-\delta\right)t\lambda(D)}\left\Vert p_{D}\left(\cdot,x,c\delta t+\left(1-c\right)t\right)\right\Vert _{2}^{2}
\end{align}
Let $1-\epsilon=c\left(1-\delta\right)\in(0,1)$ and combine \eqref{eq-spect-meas-a} and \eqref{eq-spect-meas-b} we obtain 
\begin{align*}
\left\Vert p_{D}\left(\cdot,x,t\right)\right\Vert _{2} 
 \le e^{-2\left(1-\epsilon\right)\lambda(D)t}\left\Vert p_{D}\left(\cdot,x,\epsilon t\right)\right\Vert _{2}
\end{align*}
as needed. {     Additionally, if $p_D$ is continuous, then the on-diagonal heat kernel $p_D(t,x,x)$ is well-defined for all $x\in D$. By the symmetry of the heat kernel and using Chapman-Kolmogorov we then obtain \eqref{eq-on-diag-pt}.}
\end{proof}

Next we recall a general version of an integration by parts formula in a metric measure space setting and sketch its proof.

\begin{lemma}
\label{prop:Change-of-Variables} Let $\left(M,\mu,d\right)$ be a
metric measure space. Suppose $\varphi:[0,\infty)\to[0,\infty)$ is a
decreasing function that satisfies $\varphi(\infty)=0$ and $\varphi\in C^{1}\left([0,\infty)\right)$,
then 
\[
\int_{B(x,R)^{c}}\varphi\left(d\left(x,y\right)\right)d\mu\left(y\right)=-\varphi(R)\mu\left(B\left(x,R\right)\right)-\int_{R}^{\infty}\mu\left(B\left(x,r\right)\right)\varphi^{\prime}(r)dr.
\]
\end{lemma}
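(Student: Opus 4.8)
The plan is to reduce the stated identity to a double-integral swap using the layer-cake (Fubini) representation of $\varphi$. Since $\varphi \in C^1([0,\infty))$ is decreasing with $\varphi(\infty)=0$, for every $t \ge 0$ we have the fundamental-theorem-of-calculus representation
\[
\varphi(t) = -\int_t^\infty \varphi'(s)\,ds.
\]
Because $\varphi$ is decreasing, $\varphi' \le 0$, so $-\varphi'(s)\,ds$ is a (nonnegative) measure on $(0,\infty)$, and the representation above says $\varphi(t) = \int_{(0,\infty)} \mathbf{1}_{\{s > t\}}\,(-\varphi'(s))\,ds$. This is exactly the hypothesis under which Tonelli's theorem applies with no integrability worries, since everything in sight is nonnegative.

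First I would substitute this representation into the left-hand side and interchange the order of integration:
\[
\int_{B(x,R)^c} \varphi(d(x,y))\,d\mu(y)
= \int_{B(x,R)^c} \int_R^\infty \mathbf{1}_{\{s > d(x,y)\}}\,(-\varphi'(s))\,ds\,d\mu(y),
\]
where I have used that on $B(x,R)^c$ one has $d(x,y) \ge R$, so the inner integral may be taken over $(R,\infty)$ rather than $(0,\infty)$ with the indicator $\mathbf{1}_{\{s>d(x,y)\}}$ still present. Applying Tonelli and noting that $\{y : d(x,y) \ge R,\ d(x,y) < s\} = B(x,s)\setminus B(x,R)$ has measure $\mu(B(x,s)) - \mu(B(x,R))$, I get
\[
\int_R^\infty \bigl(\mu(B(x,s)) - \mu(B(x,R))\bigr)(-\varphi'(s))\,ds.
\]
Then I would split this into $-\int_R^\infty \mu(B(x,s))\varphi'(s)\,ds$ plus $\mu(B(x,R))\int_R^\infty \varphi'(s)\,ds$; the second term equals $\mu(B(x,R))(\varphi(\infty) - \varphi(R)) = -\varphi(R)\mu(B(x,R))$, which combines with the first to give precisely the claimed right-hand side.

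The main thing to be careful about — rather than a genuine obstacle — is justifying the interchange of integrals when $\mu(B(x,s))$ may grow and $\varphi$ may have a heavy tail; but since every factor is nonnegative, Tonelli's theorem applies unconditionally, and the identity holds as an equality in $[0,\infty]$ with both sides simultaneously finite or infinite. (In the applications $\varphi$ decays fast enough against the volume growth that both sides are finite.) One should also note the edge case where $B(x,R)^c$ is empty or $\mu$ is finite, in which the formula degenerates harmlessly. I would present the argument in the two-or-three line form above, citing Tonelli for the swap.
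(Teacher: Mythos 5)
Your proof is correct, and it takes a slightly different route than the paper. The paper writes $\varphi(d(x,y))=\int_0^{\varphi(d(x,y))}dt$, applies Fubini over the level sets $\{\varphi(d(x,y))>t\}$, and then converts $\int_0^{\varphi(R)}\mu\left(B\left(x,\varphi^{-1}(t)\right)\right)dt$ into $-\int_R^\infty \mu(B(x,r))\varphi'(r)\,dr$ by an explicit change of variables through the inverse function $\phi=\varphi^{-1}$. You instead represent $\varphi(t)=\int_t^\infty(-\varphi'(s))\,ds$ by the fundamental theorem of calculus and apply Tonelli against the nonnegative measure $-\varphi'(s)\,ds$, which lands directly on the stated right-hand side after computing $\mu\left(B(x,s)\setminus B(x,R)\right)=\mu(B(x,s))-\mu(B(x,R))$ and using $\varphi(\infty)=0$. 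Your version buys two small things: it never needs $\varphi^{-1}$ (so strict monotonicity of $\varphi$ is irrelevant, whereas the paper's substitution $r=\varphi^{-1}(t)$ tacitly treats $\varphi$ as invertible), and the Tonelli justification is immediate since every integrand is nonnegative. The only ingredients you use implicitly are the paper's standing assumption $V(x,r)<\infty$ — needed both to write $\mu(B(x,s)\setminus B(x,R))$ as a difference and to split the resulting integral without an $\infty-\infty$ ambiguity (the subtracted piece $\mu(B(x,R))\int_R^\infty(-\varphi'(s))\,ds=\mu(B(x,R))\varphi(R)$ is finite) — and this is also what makes the lemma's right-hand side meaningful in the first place, so there is no gap.
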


\begin{proof}
By Fubini's theorem, we have 
\begin{align*}
\int_{B(x,R)^{c}}\varphi\left(d\left(x,y\right)\right)d\mu\left(y\right) & =\int_{B(x,R)^{c}}\int_{0}^{\varphi\left(d\left(x,y\right)\right)}dtd\mu\left(y\right)\\
 & =\int_{0}^{\varphi\left(R\right)}\mu\left(y\in B(x,R)^{c}:\varphi\left(d\left(x,y\right)\right)>t\right)dt\\
 & =\int_{0}^{\varphi\left(R\right)}\mu\left(y:\varphi\left(d\left(x,y\right)\right)>t\right)dt-\int_{0}^{\varphi\left(R\right)}\mu\left(y\in B(x,R):\varphi\left(d\left(x,y\right)\right)>t\right)dt.
\end{align*}
Since 
\[
\int_{0}^{\varphi\left(R\right)}\mu\left(y\in B(x,R):\varphi\left(d\left(x,y\right)\right)>t\right)dt=\int_{0}^{\varphi\left(R\right)}\mu\left(y\in B(x,R)\right)dt=\varphi(R)\mu\left(B\left(x,R\right)\right),
\]
we have
\begin{align*}
\int_{B(x,R)^{c}}\varphi\left(d\left(x,y\right)\right)d\mu\left(y\right) & =\int_{0}^{\varphi\left(R\right)}\mu\left(y:\varphi\left(d\left(x,y\right)\right)>t\right)dt-\varphi(R)\mu\left(B\left(x,R\right)\right)\\
 & =\int_{0}^{\varphi\left(R\right)}\mu\left(B\left(x,\phi\left(t\right)\right)\right)dt-\varphi(R)\mu\left(B\left(x,R\right)\right)
\end{align*}
where $\phi(t)=\varphi^{-1}(t)$ and $0<t<\varphi\left(R\right)$. Note
that that $\phi^{\prime}(t)=\frac{1}{\varphi^{\prime}\left(\phi(t)\right)}$
so that by change of variables with $r=\phi(t)$ we have $dr=\frac{1}{\varphi^{\prime}\left(\phi(t)\right)}dt$
hence $dt=\varphi^{\prime}\left(\phi(t)\right)dr=\varphi^{\prime}\left(r\right)dr$.
Thus 
\begin{align*}
\int_{0}^{\varphi\left(R\right)}\mu\left(B\left(x,\phi\left(t\right)\right)\right)dt & =\int_{\infty}^{R}\mu\left(B\left(x,r\right)\right)\varphi^{\prime}\left(r\right)dr\\
 & =-\int_{R}^{\infty}\mu\left(B\left(x,r\right)\right)\varphi^{\prime}\left(r\right)dr.
\end{align*}
This completes the proof. 
\end{proof}

Recall that a function $u:M\to\mathbb{R}$ is called \textit{quasi-continuous}
if it is continuous in $M\backslash E$ for some set $E$ of capacity
$0$. We recall the following proposition from \cite{Grigoryan-Hu-2014} that will be useful later.
\begin{prop}[Proposition 6.1, \cite{Grigoryan-Hu-2014}]\label{GH-Prop6.1}
Let $u$ be a quasi-continuous function on $M$
such that $u\geq0$ $\mu-$ a.e. then there is a properly exceptional
set $\mathcal{N}$ such that $u\left(x\right)\geq0$ for all $x\in M\backslash\mathcal{N}$. 
\end{prop}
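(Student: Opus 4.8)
The plan is to prove Proposition~\ref{MainThm2}, the equivalence
\[
\left(\UEF\right)\iff\left(\FKF\right)+\left(\lambFle\right)+\left(\Ebar\right)\iff\left(\FKF\right)+\left(\lambFle\right)+\left(\Ebarp\right),
\]
under the standing hypotheses that $\mu$ satisfies $\left(\VD\right)$ and $\left(\RVD\right)$, and that $\left(\mathcal{E},\mathcal{F}\right)$ is regular, local, and conservative. I would organize the argument as a two-part cycle: first the ``easy'' direction $\left(\UEF\right)\Rightarrow\left(\FKF\right)+\left(\lambFle\right)+\left(\Ebar\right)$, then the ``hard'' converse, and finally absorb $\left(\Ebarp\right)$ into the picture by showing $\left(\Ebarp\right)\Leftrightarrow\left(\Ebar\right)$ in the presence of the other two conditions.

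For the forward direction, I would proceed as follows. First, $\left(\UEF\right)$ together with $\left(\VD\right)$ is known (via the on-diagonal lower bound machinery and Grigor'yan--Hu type arguments, cf.\ \cite{Grigoryan-Hu-2014}) to imply the mean exit time upper bound $\esup_{x\in B(x,r)}\mathbb{E}_x[\tau_{B(x,r)}]\le C F(r)$, i.e.\ $\left(\EFle\right)$; by a standard argument this gives the spectral upper bound $\left(\lambFle\right)$ since $\lambda(D)\esup_{x\in D}\mathbb{E}_x[\tau_D]\ge 1$ forces $\lambda(B(x,r))\le c_2/F(r)$. Next, the Faber--Krahn inequality $\left(\FKF\right)$ follows from $\left(\UEF\right)$ via the standard Nash-type argument: an $L^1\to L^\infty$ bound on $P_t^D$ of the form $\|P_t^D\|_{1\to\infty}\le C/\big(\mu(B)\, (t/F(r))^{\text{power}}\big)$ for $D\subset B$, combined with $\lambda(D)=\inf\text{spec}\,\mathcal{L}_D$ and the decay of $\|P_t^D\|_{2\to2}=e^{-\lambda(D)t}$, yields \eqref{FK}. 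Finally, to get $\left(\Ebar\right)$ I would use Theorem~\ref{thm:Prob-Estimate}: from the survival probability bound $\sup_{y\in D\setminus\mathcal N}\mathbb{P}_y(\tau_D>t)\le C_M(1+1/\epsilon)^{d'}e^{-(1-\epsilon)\lambda(D)t}$ applied to $D=B(x,r)$ one integrates in $t$ to obtain $\esup_{y\in B(x,r)}\mathbb{E}_y[\tau_{B(x,r)}]\le C'/\lambda(B(x,r))\le C'' F(r)$ using $\left(\lambFle\right)$, while the near-diagonal lower heat kernel bound (again a consequence of $\left(\UEF\right)+\left(\VD\right)+\left(\RVD\right)$, or at least the matching mean-exit-time lower bound $\mathbb{E}_x[\tau_{B(x,r)}]\ge c F(r)$) gives a lower bound on $\mathbb{E}_x[\tau_{B(x,r)}]$ at the center; comparing these two gives \eqref{eq:e-bar}.

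For the converse $\left(\FKF\right)+\left(\lambFle\right)+\left(\Ebar\right)\Rightarrow\left(\UEF\right)$, the strategy is to reduce to the Grigor'yan--Hu characterization \eqref{thm:Grigoryan-Hu-3}, namely $\left(\UEF\right)\iff\left(\EOF\right)+\left(\EF\right)$ (stated there for $F(r)=r^\beta$ but valid in the $F$-generality under \eqref{eq:F-regularity}). So it suffices to derive $\left(\EF\right)$ and $\left(\EOF\right)$. The upper half of $\left(\EF\right)$, i.e.\ $\left(\EFle\right)$, comes from $\left(\lambFle\right)$ combined with the lower spectral bound $\lambda(D)\esup_{x\in D}\mathbb{E}_x[\tau_D]\ge 1$ applied with $D=B(x,r)$: this gives a bound on $\esup_y\mathbb{E}_y[\tau_{B(x,r)}]$ only through $1/\lambda$, which is the wrong inequality direction, so instead I would use $\left(\FKF\right)$: Faber--Krahn yields a Nash inequality hence an $L^1\to L^2$ decay estimate for $P_t^{B(x,r)}$, and integrating $\mathbb{E}_x[\tau_{B(x,r)}]=\int_0^\infty \mathcal{P}_t^{B(x,r)}1(x)\,dt$ against this decay gives $\esup_y\mathbb{E}_y[\tau_{B(x,r)}]\le C F(r)$. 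The lower half $\left(\EFge\right)$, i.e.\ $\mathbb{E}_x[\tau_{B(x,r)}]\ge c F(r)$, follows from $\left(\lambFle\right)$: a Faber--Krahn/variational argument produces a test function supported in $B(x,r)$ witnessing the small eigenvalue, and since $\esup_y\mathbb{E}_y[\tau_{B(x,2r)}]\ge (\text{const})/\lambda(B(x,2r))\ge c F(r)$, an application of $\left(\Ebar\right)$ transfers the large mean exit time from an arbitrary point to the center, giving $\mathbb{E}_x[\tau_{B(x,r)}]\ge cF(r)$ --- this is exactly the role of the condition $\left(\Ebar\right)$, which repairs the absence of $\left(\lambFge\right)$ flagged in the remark. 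For $\left(\EOF\right)$, I would again invoke $\left(\FKF\right)$: for $D\subset B$ with $B$ of radius $r$, the Faber--Krahn inequality \eqref{FK} gives $\lambda(D)\ge \frac{c}{F(r)}(\mu(B)/\mu(D))^\nu$, and then $\esup_{x\in D}\mathbb{E}_x[\tau_D]\le C/\lambda(D)\le C F(r)(\mu(D)/\mu(B))^\nu$, where the first inequality $\esup_x\mathbb{E}_x[\tau_D]\le C/\lambda(D)$ is itself obtained by integrating Theorem~\ref{thm:Prob-Estimate}'s survival bound (whose hypotheses $\left(\UEF\right)+\left(\VD\right)$ are not yet available --- so here I must instead use the elementary bound $\esup_x\mathbb{E}_x[\tau_D]\le C/\lambda(D)$ that follows directly from $P_t^D$ having spectral gap $\lambda(D)$ and the $\left(\FKF\right)$-derived ultracontractivity, as in \cite{Grigoryan-Hu-2014}). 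This is the crux and the most delicate point: I would need to verify that the Grigor'yan--Hu toolbox, in particular $\esup_x\mathbb{E}_x[\tau_D]\le C\lambda(D)^{-1}$ and the integrability/exceptional-set bookkeeping, goes through in the $F$-generality and yields properly exceptional (not merely capacity-zero) exceptional sets --- for which Proposition~\ref{GH-Prop6.1} is the key tool.

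Finally, for the equivalence with $\left(\Ebarp\right)$: clearly $\left(\Ebarp\right)\Rightarrow\left(\Ebar\right)$ since the infimum over $B(x,\delta r)$ dominates the value at the center $x$. For the reverse under $\left(\FKF\right)+\left(\lambFle\right)$, having already derived $\left(\EF\right)$ one has $\einf_{y\in B(x,\delta r)}\mathbb{E}_y[\tau_{B(x,r)}]\ge \einf_{y\in B(x,\delta r)}\mathbb{E}_y[\tau_{B(y,(1-\delta)r)}]\ge c F((1-\delta)r)\ge c' F(r)$ by \eqref{eq:F-regularity} and domain monotonicity of $\tau$, while $\esup_{y\in B(x,r)}\mathbb{E}_y[\tau_{B(x,r)}]\le C F(r)$; comparing gives \eqref{df:e-bar-prime}'s inequality with $\delta$ any fixed value in $(0,1)$. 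I expect the main obstacle to be the careful bookkeeping of which implications require which of $\left(\VD\right)$, $\left(\RVD\right)$, conservativeness, and $\left(\UEF\right)$ --- in particular making sure no step in the converse direction secretly uses $\left(\UEF\right)$ or Theorem~\ref{thm:Prob-Estimate} --- together with ensuring all exceptional sets appearing in $\left(\Ebar\right)$, $\left(\EF\right)$, $\left(\EOF\right)$ can be taken to be a single properly exceptional set via Proposition~\ref{GH-Prop6.1}.
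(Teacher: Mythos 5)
Your proposal does not address the statement at hand. The statement to be proved is Proposition \ref{GH-Prop6.1}: if $u$ is a quasi-continuous function on $M$ with $u\geq 0$ $\mu$-a.e., then $u\geq 0$ everywhere outside some properly exceptional set $\mathcal{N}$. What you have written is instead a strategy for Proposition \ref{MainThm2} (the characterization of $\left(\UEF\right)$ via $\left(\FKF\right)$, $\left(\lambFle\right)$, $\left(\Ebar\right)$, $\left(\Ebarp\right)$); you only mention Proposition \ref{GH-Prop6.1} in passing, as a tool for the exceptional-set bookkeeping in that other argument. Nothing in your text bears on the claim that a nonnegative-a.e.\ quasi-continuous function is nonnegative quasi-everywhere in the strengthened, ``properly exceptional'' sense, so as a proof of the quoted statement it is entirely missing.

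For the record: in the paper this proposition is not proved at all but quoted from \cite{Grigoryan-Hu-2014} (Proposition 6.1 there). The actual argument is potential-theoretic and short in outline: quasi-continuity of $u$ means that for every $\varepsilon>0$ there is an open set of capacity less than $\varepsilon$ off which $u$ is continuous; since $u\geq 0$ $\mu$-a.e.\ and $\mu$ has full support, the set $\{u<0\}$ is quasi-open of measure zero and hence has capacity zero for a regular Dirichlet form; finally, every set of capacity zero is contained in a Borel properly exceptional set (this is the standard result for regular Dirichlet forms, see \cite{Fukushima-book-ed1-1994}), which yields the desired $\mathcal{N}$. None of the machinery in your proposal (Faber--Krahn, survival-probability bounds, exit-time comparisons) is relevant to this purely capacity-theoretic fact, and conversely your sketch of Proposition \ref{MainThm2} cannot be graded as a proof of Proposition \ref{GH-Prop6.1}.
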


 \subsection{Proof of Theorem \ref{thm:Prob-Estimate}} \label{sec-proof-main}
We complete the proof by 4 steps.

%


\textbf{\underline{Step 1:}} 

Let us start with the existence of the heat kernel $p_D$ of the analytic heat semigroup $(P_t^D)_{t\ge0}$  for any domain $D\subset M$ such that $\mu \left(D\right)<\infty$. To see this, we first need the ultracontractivity of the heat semigroup $P_t^B$ for any ball $B\subset M$. Precisely, we want to show that there
is a positive left-continuous function $M\left(t\right)$ such
that for any $f\in L^{1}\cap L^{2}\left(B\right)$ and $t>0$ one
has 
\begin{align}\label{eq-ultra}
    \left\Vert P_{t}^{B}f\right\Vert _{\infty}\leq M\left(t\right)\left\Vert f\right\Vert _{1}.
\end{align}


Using Markov property of $P_t^B$ and 
$\left(\UEF\right)$ we see that  
\begin{align*}
\left\Vert P_{t}^{B}f\right\Vert _{\infty} & =\left\Vert P_{t}^{B}\left(f1_{B}\right)\right\Vert _{\infty}\leq\left\Vert P_{t}^{B}\left(\left|f\right|1_{B}\right)\right\Vert _{\infty}\leq\left\Vert P_{t}\left(\left|f\right|1_{B}\right)\right\Vert _{\infty}\\
 & =\esup_{x\in B}\int_{B}\left|f\left(y\right)\right|p_M\left(x,y,t\right)dy\leq\esup_{x\in B}\frac{C_{\text{UE}}}{V\left(x,\mathcal{R}\left(t\right)\right)}\left\Vert f\right\Vert _{1}.
\end{align*}
 By ($\VD$) and \eqref{eq:R-regularity} we have that there exists a constant $C>0$
such that for any $t>0$, 
\begin{align*}
\frac{1}{V\left(x,\mathcal{R}\left(t\right)\right)} & \leq\frac{1}{V\left(x,\mathcal{R}\left(1\right)\right)}\left(C_{\alpha}\left(\frac{\mathcal{R}\left(1\right)}{\mathcal{R}\left(t\right)}\right)^{\alpha}1_{\left(t<1\right)}+1\right)\leq\frac{1}{V\left(x,\mathcal{R}\left(1\right)\right)}\left(\frac{C}{t^{\alpha/\beta}}+1\right).
\end{align*}
Fix any $x_{0}\in B$. By \eqref{eq:Vd-alpha} there exists a constant
$C_{\text{diam}\left(B\right)}$ depending on the $\text{\text{diam}\ensuremath{\left(B\right)}}$
such that 
\[
\frac{V\left(x_{0},\mathcal{R}\left(1\right)\right)}{V\left(x,\mathcal{R}\left(1\right)\right)}\leq C\left(\frac{d\left(x,x_{0}\right)}{\mathcal{R}\left(1\right)}+1\right)^{\alpha}\leq C_{\text{diam}\left(B\right)}.
\]
Putting these estimates together shows that \eqref{eq-ultra}
holds with $M\left(t\right)=\frac{C_{\text{diam}\left(B\right)}}{V\left(x_{0},\mathcal{R}\left(1\right)\right)}\left(\frac{C}{t^{\alpha/\beta}}+1\right)$, which is positive and continuous. 
Therefore $P_{t}^{B}$ is ultracontractive for any ball $B\subset M$, which by definition means $P_t$ is locally ultracontractive.
Due to \cite[Theorem 2.12]{Grigoryan-Telcs-2012} we know that there exists a properly exceptional set $\mathcal{N}\subset M$
such that for every $D\subset M$, the semigroup $\mathcal{P}_{t}^{D}$
defined in \eqref{eq:prob-killed-heat-kernel} has an integral kernel $p_{D}\left(\cdot,\cdot,t\right)$
in the strong sense as \cite[Definition 2.3.]{Grigoryan-Telcs-2012} with domain $D\backslash \mathcal{N}$. By \cite[Equation (2.7)]{Grigoryan-Telcs-2012} one has that $p_D$ is a heat kernel for $P_{t}^{D}$.

Next we claim that under the assumptions in Theorem \ref{thm:Prob-Estimate},
\begin{align}\label{eq-claim}
\esup_{x\in D} P_t 1_{D}(x)=\esup_{x\in D}\int_{D}p_{D}\left(x,y,t\right)d\mu(y)\le
 C_{\pa,\delta}\, e^{-\pa(1-\delta ) t\lambda(D)}
\end{align}
for any $\pa, \delta\in (0,1)$ with $C_{\pa,\delta}>0$ that depends on $\pa,\delta,\alpha, \beta$ and constants appear in the assumptions.

For any $\pa\in (0,1)$, by H\"older's inequality with $p=\frac2\pa$ and $q=\frac{2}{2-\pa}$ we have for a.e. $x\in D$
\begin{align}\label{eq-int-pd}
\int_{D}p_{D}\left(x,y,t\right)d\mu(y)&=\int_{D}p_{D}\left(x,y,t\right)^{\pa}p_{D}\left(x,y,t\right)^{1-\pa}d\mu(y)\notag\\
&\le 
\left(\underbrace{\int_{D}p_{D}\left(x,y,t\right)^{2}d\mu\left(y\right)}_{I}\right)^{\pa/2}  \left(\underbrace{\int_{D}p_{D}\left(x,y,t\right)^{\frac{2(1-\pa)}{2-\pa}}d\mu\left(y\right)}_{II}\right)^{\frac{2-\pa}{2}}
\end{align}
\textbf{\underline{Estimate for $I$:}}
From $\left(\UEF\right)$ we know that for a.e. $x\in D$ and $t>0$, 
\begin{equation}\label{eq:usespectrallem}
\left\Vert p_{D}\left(\cdot,x,t\right)\right\Vert _{2}^{2}
\leq \frac{C_{\text{UE}}^{2}\mu\left(D\right)}{V\left(x,\mathcal{R}\left(t\right)\right)^{2}}<\infty,
\end{equation}
which shows that $p_{D}\left(\cdot,x,t\right)\in L^{2}\left(D\right)$. Then by  Lemma \ref{lem:OnDiag-Eige} we have that 
\begin{align*}
I&\le  e^{-2\left(1-\delta\right)t\lambda(D)}\int_{D}p_{D}\left(x,y,\delta t\right)^{2}d\mu\left(y\right) \\
&\le  e^{-2\left(1-\delta\right)t\lambda(D)}  \esup_{y\in D}p_{D}\left(x,y,\delta t\right) \int_{D}p_{D}\left(x,y,\delta t\right)d\mu\left(y\right) \\
&\le  e^{-2\left(1-\delta\right)t\lambda(D)}  \esup_{y\in D}p_{D}\left(x,y,\delta t\right).
\end{align*}
Moreover since $\left(\UEF\right)$ implies that 
\begin{equation}\label{eq-pD-essup}
\esup_{y\in D}p_{D}(x,y,\delta t)\leq\frac{\Cue}{V\left(x,\mathcal{R}\left(\delta t\right)\right)},
\end{equation}
we then obtain that
\begin{equation}\label{Estimate_on_I}
I\le e^{-2\left(1-\delta\right)t\lambda(D)} \frac{\Cue}{V\left(x,\mathcal{R}\left(\delta t\right)\right)}.
\end{equation}

\textbf{\underline{Estimate for $II$:}}
 Next we estimate $II$  under the assumptions of $\left(\VD\right)$ and $\left(\UEF\right)$ from Theorem  \ref{thm:Prob-Estimate}. \\

 Recall the notation $q=\frac{2}{2-\pa}$. {     Since the power of $p_D(x,y,t)$ in $II$ satisfies $0<\frac{2(1-\gamma)}{2-\gamma}<1$.} Applying $\left(\UEF\right)$ we obtain that
 \begin{equation}\label{eq-II-a}
II\leq\int_{D}\frac{C_{\text{UE}}^{\left(1-\pa\right)q}}{V\left(x,\mathcal{R}\left(t\right)\right)^{\left(1-\pa\right)q}}\exp\left(-\frac{\left(1-\pa\right)q}{2}t\Phi\left(c_{\text{UE}}\frac{d\left(x,y\right)}{t}\right)\right)d\mu\left(y\right).
 \end{equation}
By  \cite[Lemma 3.19]{Grigoryan-Telcs-2012}, there exists a constant $c_{1}>0$ such
that for all $R,t>0$, \begin{equation}
t\Phi\left(\frac{R}{t}\right)\geq c_{1}\min\left\{ \left(\frac{F\left(R\right)}{t}\right)^{1/\left(\beta^{\prime}-1\right)},\left(\frac{F\left(R\right)}{t}\right)^{1/\left(\beta-1\right)}\right\}.\label{eq:Phi-lower-bound}
\end{equation}
Plug the above estimates  into \eqref{eq-II-a} we
obtain 
 \begin{equation}
II\leq\int_{M}H\left(d\left(x,y\right)\right)d\mu\left(y\right),\label{eq:I-F-estimate-1}
\end{equation}
where 
\begin{align*}
H\left(r\right) & :=\frac{C_{\text{UE}}^{\left(1-\pa\right)q}}{V\left(x,\mathcal{R}\left(t\right)\right)^{\left(1-\pa\right)q}}\exp\left(-c_{\text{UE}}c_{1}\frac{\left(1-\pa\right)q}{2}\min\left\{ \left(\frac{F\left(r\right)}{t/c_{\text{UE}}}\right)^{\frac{1}{\beta^{\prime}-1}},\left(\frac{F\left(r\right)}{t/c_{\text{UE}}}\right)^{\frac{1}{\beta-1}}\right\} \right).
\end{align*}
Let 
\begin{align}
A_{\beta^{\prime}} & :=\left\{ y\in M\mid\left(\frac{F\left(d\left(x,y\right)\right)}{t/c_{\text{UE}}}\right)^{\frac{1}{\beta^{\prime}-1}}\leq\left(\frac{F\left(d\left(x,y\right)\right)}{t/c_{\text{UE}}}\right)^{\frac{1}{\beta-1}}\right\} , \quad A_{\beta}  :
=A_{\beta^{\prime}}^{c},\nonumber 
\end{align}
and 
\begin{align}
B_{1} & :=\left\{ y\in M\mid d(x,y)\leq\mathcal{R}\left(t\right)\right\} , \quad B_{2}:=B_{1}^{c},\nonumber 
\end{align}
We then have
\begin{align}\label{eq-II-a-2}
  II & \le \int_{A_{\beta^{\prime}}\cap B_1 }H\left(d\left(x,y\right)\right)d\mu\left(y\right)+\int_{A_{\beta^{\prime}}\cap B_2}H\left(d\left(x,y\right)\right)d\mu\left(y\right)\nonumber \\
 & +\int_{A_{\beta}\cap B_1}H\left(d\left(x,y\right)\right)d\mu\left(y\right)+\int_{A_{\beta}\cap B_2}H\left(d\left(x,y\right)\right)d\mu\left(y\right)\nonumber \\
&=:  II_{\beta^{\prime},1}+II_{\beta^{\prime},2}
+  II_{\beta,1}+II_{\beta,2}.
\end{align}

Using the definitions of $A_{\beta^{\prime}}$ and $A_{\beta}$ we clearly have that for $i=1,2$,
\begin{equation}
II_{\beta^{\prime},i}\leq\int_{A_{\beta^{\prime}}\cap B_{i}} \frac{C_{\text{UE}}^{\left(1-\pa\right)q}}{V\left(x,\mathcal{R}\left(t\right)\right)^{\left(1-\pa\right)q}}\exp\left(-c_{\text{UE}}c_{1}\frac{\left(1-\pa\right)q}{2}\left(\frac{F\left(d(x,y)\right)}{t/c_{\text{UE}}}\right)^{\frac{1}{\beta'-1}}\right)d\mu\left(y\right),\label{eq:II-beta-prime-i}
\end{equation}
and
\begin{equation}
II_{\beta,i}\leq\int_{A_{\beta}\cap B_{i}}\frac{C_{\text{UE}}^{\left(1-\pa\right)q}}{V\left(x,\mathcal{R}\left(t\right)\right)^{\left(1-\pa\right)q}}\exp\left(-c_{\text{UE}}c_{1}\frac{\left(1-\pa\right)q}{2}\left(\frac{F\left(d(x,y)\right)}{t/c_{\text{UE}}}\right)^{\frac{1}{\beta-1}}\right)d\mu\left(y\right).\label{eq:II-beta-i}
\end{equation}
We now estimate the $II_{\beta',i}$ and $II_{\beta,i}$ for $i=1,2$ term by term.

Since $d(x,y)\leq\mathcal{R}\left(t\right)$ on $B_{1}$ and 
 by $(\ref{eq:F-regularity})$ we have
\[
C_{F}^{-1}\left(\frac{\mathcal{R}\left(t\right)}{d\left(x,y\right)}\right)^{\beta}F\left(d\left(x,y\right)\right)\leq F\left(\mathcal{R}\left(t\right)\right)\leq F\left(d\left(x,y\right)\right)C_{F}\left(\frac{\mathcal{R}\left(t\right)}{d\left(x,y\right)}\right)^{\beta^{\prime}}.
\]
Using this in (\ref{eq:II-beta-prime-i}) we have that
\begin{align}
 & II_{\beta^{\prime},1} 
\leq  \frac{C_{\text{UE}}^{\left(1-\pa\right)q}}{V\left(x,\mathcal{R}\left(t\right)\right)^{\left(1-\pa\right)q}}\int_{M}\exp\left(-b(t)d\left(x,y\right)^{\frac{\beta^{\prime}}{\beta^{\prime}-1}}\right)d\mu\left(y\right)\label{eq:Case1a-first}
\end{align}
where $b(t)=\frac{1}{2}c_{\text{UE}}c_{1}\left(1-\pa\right)q\left(\frac{C_{F}^{-1}c_{\text{UE}}F\left(\mathcal{R}\left(t\right)\right)}{\mathcal{R}\left(t\right)^{\beta^{\prime}}t}\right)^{\frac{1}{\beta^{\prime}-1}}$. Using Lemma \ref{prop:Change-of-Variables} with $\varphi\left(r\right)=\exp\left(-b(t)r^{\frac{\beta^{\prime}}{\beta^{\prime}-1}}\right)$ and $R=0$
we have,
\begin{align}
&\frac1{V\left(x,\mathcal{R}\left(t\right)\right)}\int_{M}\exp\left(-b(t)d\left(x,y\right)^{\frac{\beta^{\prime}}{\beta^{\prime}-1}}   \right)d\mu\left(y\right)=-\frac1{V\left(x,\mathcal{R}\left(t\right)\right)}\int_{0}^{\infty}\mu\left(B\left(x,r\right)\right)\varphi^{\prime}(r)dr\notag\\
&\quad\quad
=b(t)\frac{\beta^\prime}{\beta^\prime-1}\int_{0}^{\infty}\frac{V\left(x,r\right)}{V\left(x,\mathcal{R}\left(t\right)\right)}\exp\left(-b(t)r^{\frac{\beta^{\prime}}{\beta^{\prime}-1}}\right)r^{\frac{1}{\beta^{\prime}-1}}dr. \label{eq-mid-1}
\end{align}

Split the above integral into two parts $\int_0^{\mathcal{R}\left(t\right)}$ and $\int_{\mathcal{R}\left(t\right)}^\infty$. Since $V\left(x,r\right)\leq V\left(x,\mathcal{R}\left(t\right)\right)$
for $r\leq\mathcal{R}\left(t\right)$ then  
\begin{align}\label{eq-int-est<R}
\int_{0}^{\mathcal{R}\left(t\right)}\frac{V\left(x,r\right)}{V\left(x,\mathcal{R}\left(t\right)\right)}\exp\left(-b(t)r^{\frac{\beta^{\prime}}{\beta^{\prime}-1}}\right)r^{\frac{1}{\beta^{\prime}-1}}dr & \leq \int_{0}^{\mathcal{R}\left(t\right)}\exp\left(-b(t)r^{\frac{\beta^{\prime}}{\beta^{\prime}-1}}\right)r^{\frac{1}{\beta^{\prime}-1}}dr\\
 & \leq\frac{\beta^{\prime}-1}{\beta^{\prime}}\frac{1}{b(t)}
\end{align}
By assumption $\left(\VD\right)$ we know that there exist a constant $C_{\alpha}$ such that
\begin{align}\label{eq-int-est>R}
\int_{\mathcal{R}\left(t\right)}^{\infty}\frac{V\left(x,r\right)}{V\left(x,\mathcal{R}\left(t\right)\right)}\exp\left(-b(t)r^{\frac{\beta^{\prime}}{\beta^{\prime}-1}}\right)r^{\frac{1}{\beta^{\prime}-1}}dr & \leq C_{\alpha}\int_{\mathcal{R}\left(t\right)}^{\infty}\frac{r^{\alpha}}{\mathcal{R}\left(t\right)^{\alpha}}\exp\left(-b(t)r^{\frac{\beta^{\prime}}{\beta^{\prime}-1}}\right)r^{\frac{1}{\beta^{\prime}-1}}dr\notag\\
 & \leq\frac{C_{\alpha}}{\mathcal{R}\left(t\right)^{\alpha}}\frac{\beta^{\prime}-1}{\beta^{\prime}}\frac{\Gamma\left(1+\frac{\beta^{\prime}-1}{\beta^{\prime}}\alpha\right)}{b(t)^{\alpha\frac{\beta^{\prime}-1}{\beta^{\prime}}+1}}.
\end{align}
Moreover, since $\mathcal{R}\left(t\right)=F^{-1}\left(t\right)$, we have that
\begin{align*}
 & \frac{b(t)^{-\alpha\frac{\beta^{\prime}-1}{\beta^{\prime}}}}{\mathcal{R}\left(t\right)^{\alpha}}
=  \left(\frac{1}{2}c_{\text{UE}}c_{1}\right)^{-\alpha\frac{\beta^{\prime}-1}{\beta^{\prime}}}\left(C_{F}^{-1}c_{\text{UE}}\right)^{-\alpha/\beta^{\prime}}\left(\left(1-\pa\right)q\right)^{-\alpha\frac{\beta^{\prime}-1}{\beta^{\prime}}}.
\end{align*}
Plugging the above  equalities and \eqref{eq-int-est<R}, \eqref{eq-int-est>R} into \eqref{eq-mid-1} we have
\begin{align}
\frac{1}{V\left(x,\mathcal{R}\left(t\right)\right)}\int_{M}\exp\left(-b(t)d\left(x,y\right)^{\frac{\beta^{\prime}}{\beta^{\prime}-1}}\right)d\mu\left(y\right) & \leq 1+ C_M\left(\frac{2-2\pa}{2-\pa}\right)^{-\alpha\frac{\beta^{\prime}-1}{\beta^{\prime}}} \nonumber \\
 & \leq 2C_M\left(\frac{2-2\pa}{2-\pa}\right)^{-\alpha\frac{\beta^{\prime}-1}{\beta^{\prime}}}.\label{eq-mid-3}
\end{align}
From this point we use $C_M$ to denote constants that depend on $\alpha,\alpha^\prime,\beta,\beta^\prime,F$ and other constants in the assumptions. The constant $C_M$ can change from line to line.  Plugging (\ref{eq-mid-3}) into (\ref{eq:Case1a-first}) we obtain
the bound of 
\begin{equation}
II_{\beta^{\prime},1}\leq\frac{C_{\text{UE}}^{\left(1-\pa\right)q}}{V\left(x,\mathcal{R}\left(t\right)\right)^{\left(1-\pa\right)q-1}}C_M\left(\frac{2-2\pa}{2-\pa}\right)^{-\alpha\frac{\beta^{\prime}-1}{\beta^{\prime}}}.\label{eq:Case1a-final}
\end{equation}
Next we estimate $II_{\beta^{\prime},2}$. Since $d(x,y)\geq\mathcal{R}\left(t\right)$ on $B_{2}$, by $(\ref{eq:F-regularity})$ we have
\[
C_{F}^{-1}\left(\frac{d\left(x,y\right)}{\mathcal{R}\left(t\right)}\right)^{\beta}F\left(R\left(t\right)\right)\leq F\left(d\left(x,y\right)\right)\leq F\left(\mathcal{R}\left(t\right)\right)C_{F}\left(\frac{d\left(x,y\right)}{\mathcal{R}\left(t\right)}\right)^{\beta^{\prime}}.
\]
Plugging it into \eqref{eq:II-beta-prime-i} we obtain
\begin{align*}
 & II_{\beta^{\prime},2}
\leq  \frac{C_{\text{UE}}^{\left(1-\pa\right)q}}{V\left(x,\mathcal{R}\left(t\right)\right)^{\left(1-\pa\right)q}}\int_{M}\exp\left(-b(t)d\left(x,y\right)^{\frac{\beta}{\beta^{\prime}-1}}\right)d\mu\left(y\right)
\end{align*}
where $b(t)=\frac{1}{2}c_{\text{UE}}c_{1}\left(1-\pa\right)q\left(\frac{C_{F}^{-1}c_{\text{UE}}}{\mathcal{R}\left(t\right)^{\beta}}\right)^{\frac{1}{\beta^{\prime}-1}}$.
Following similar estimate as before we have
\begin{equation}
II_{\beta^{\prime},2}\leq\frac{C_{\text{UE}}^{\left(1-\pa\right)q}}{V\left(x,\mathcal{R}\left(t\right)\right)^{\left(1-\pa\right)q-1}}C_M\left(\frac{2-2\pa}{2-\pa}\right)^{-\alpha\frac{\beta^{\prime}-1}{\beta}}.\label{eq:Case1b-final}
\end{equation}
Similarly we can also obtain that
\begin{equation}\label{eq:Case1c-final}
II_{\beta,1}\leq\frac{C_{\text{UE}}^{\left(1-\pa\right)q}}{V\left(x,\mathcal{R}\left(t\right)\right)^{\left(1-\pa\right)q-1}}C_M\left(\frac{2-2\pa}{2-\pa}\right)^{-\alpha\frac{\beta-1}{\beta^{\prime}}}\end{equation}
and
\begin{equation}\label{eq:Case1d-final}
II_{\beta,2}\leq\frac{C_{\text{UE}}^{\left(1-\pa\right)q}}{V\left(x,\mathcal{R}\left(t\right)\right)^{\left(1-\pa\right)q-1}}C_M \left(\frac{2-2\pa}{2-\pa}\right)^{-\alpha\frac{\beta-1}{\beta}}.
\end{equation}

Next we plug \eqref{eq:Case1a-final}, \eqref{eq:Case1b-final}, \eqref{eq:Case1c-final}  and \eqref{eq:Case1d-final} into \eqref{eq-II-a-2}. Taking into account that $\beta\leq\beta^{\prime}$ and
$\frac{2-2\pa}{2-\pa}<1$ for $\pa\in\left(0,1\right)$ we have 
\begin{equation}\label{Estimate_on_II}
II\leq\frac{C_{\text{UE}}^{\left(1-\pa\right)q}}{V\left(x,\mathcal{R}\left(t\right)\right)^{\left(1-\pa\right)q-1}}C_M\left(\frac{2-2\pa}{2-\pa}\right)^{-\alpha\frac{\beta^{\prime}-1}{\beta}}.
\end{equation}
so that 
\begin{equation}\label{Estimate_on_II-2}
II^{1/q}\leq C_MV\left(x,\mathcal{R}\left(t\right)\right)^{\pa/2}\left(\frac{2-2\pa}{2-\pa}\right)^{-\alpha\frac{\beta^{\prime}-1}{\beta}\left(1-\frac{\pa}{2}\right)}.
\end{equation}

Plugging the estimates of $I$ and $II$ from \eqref{Estimate_on_I} and \eqref{Estimate_on_II-2} into \eqref{eq-int-pd} and applying $\left(\VD\right)$ and \eqref{eq:R-regularity} 
we then obtain that 
\begin{align}
 & \int_{D}p_{D}\left(x,y,t\right)d\mu\left(y\right)\nonumber \le C_M\left(\frac{2-2\pa}{2-\pa}\right)^{-\alpha\frac{\beta^{\prime}-1}{\beta}\left(1-\frac{\pa}{2}\right)}\frac{V\left(x,\mathcal{R}\left(t\right)\right)^{\pa/2}}{V\left(x,\mathcal{R}\left(\delta t\right)\right)^{\pa/2}}e^{-\pa\left(1-\delta\right)t\lambda\left(D\right)}
\end{align}
By $\left(\VD\right)$ and \eqref{eq:R-regularity} we know that
\[
\frac{V\left(x,\mathcal{R}\left(t\right)\right)^{\pa/2}}{V\left(x,\mathcal{R}\left(\delta t\right)\right)^{\pa/2}}
\le
C_{\alpha}^{\frac{\pa}2}\left(\frac{\mathcal{R}\left(t\right)}{\mathcal{R}\left(\delta t\right)}\right)^{\alpha\frac{\pa}{2}}
\le C_{\alpha}^{\frac{\pa}2}C_{F}^{\frac{\alpha\pa}{2\beta}}\delta^{-\frac{\alpha}{\beta}\cdot\frac{\pa}{2}}.
\]
Hence
\begin{align}
 & \int_{D}p_{D}\left(x,y,t\right)d\mu\left(y\right)
 \leq C_{M,\pa,\delta}e^{-\pa\left(1-\delta\right)t\lambda\left(D\right)}\label{eq:final_est_on_both}
\end{align}
where
\[
C_{M,\pa,\delta}=C_M\left(\frac{2-2\pa}{2-\pa}\right)^{-\alpha\frac{\beta^{\prime}-1}{\beta}\left(1-\frac{\pa}{2}\right)}\delta^{-\frac{\alpha}{\beta}\cdot\frac{\pa}{2}},
\]
and $C_M$ is a constant independent of $\pa,\delta,t,D$. It is only dependent on constants in $\left(\VD\right)$, $\left(\UEF\right)$ and \eqref{eq:F-regularity}. This proves the Claim \eqref{eq-claim} under conditions $\left(\VD\right)$ and $\left(\UEF\right)$.


\textbf{\underline{Step 2:}} Next we show \eqref{eq-claim} holds for any domain  that has a positive bottom of the spectrum, i.e. $\lambda(D)>0$. For any domain $D\subset M$ that satisfies $\lambda\left(D\right)>0$, we can find open sets of finite measure $D_{n}\subset M$, $n=1,2,\dots$ such
that $D_{n}\subset D_{n+1}$ for all $n\geq1$ and $D=\cup_{n\geq1}D_{n}$.
Then by step 1 we know that for a.e. $x\in D$, 
\begin{align*}
P_{t}^{D}1_{D}(x) & =\lim_{n\to\infty}P_{t}^{D_{n}}1_{D_{n}}\left(x\right)\\
 & \leq\limsup_{n\to\infty}C_{\pa,\delta}e^{-\pa\left(1-\delta\right)t\lambda\left(D_{n}\right)}
\end{align*}
Note here $C_{\pa,\delta}$ does not depend on any of the domains. Then by the monotonicity of $\lambda(\cdot)$ and the fact that $\lim_{n\to\infty}\lambda\left(D_{n}\right)=\lambda\left(D\right)$ we obtain \eqref{eq-claim} for domains with positive bottom of the spectrum. \\

\textbf{\underline{Step 3:}} 
Next we show that there exists a properly exceptional set $\mathcal{N}\subset M$ and   a constant $C_{\pa,\delta}>0$ such that for every $\pa,\delta\in\left(0,1\right)$ 
\begin{equation}\label{eq-bound-Q}
\sup_{x\in D\backslash\mathcal{N}}\mathbb{P}_{x}\left(\tau_{D}>t\right)\leq  C_{\pa,\delta}\, e^{-\pa(1-\delta ) t\lambda(D)}
\end{equation}
for any $t>0$. First recall from Section \ref{Sec:Diff} that for $\mu-$a.e. $x\in D$, we have
\[
\P_x(\tau_D>t)=\E_x(1_D(X^D_t))=P_t^D 1_D(x).
\] 
{   See the discussion in \cite[Section 6.2, page 554]{Grigoryan-Hu-2014} for the comparison of of $\P_x(\tau_D>t)$ and $P_t^D 1_D(x)$. }

It is known that for any $t>0$ and any $\Omega\subset M$ satisfying $\mu\left(\Omega\right)<\infty$,  $\mathbb{P}_{x}\left(\tau_{\Omega}>t\right)$ as a function of $x$,
is a quasi-continuous realization of $P_{t}^{\Omega}1_{\Omega}(x)$ (see \cite[Theorem 4.2.3]{Fukushima-book-ed1-1994}).  Thus by Proposition \ref{GH-Prop6.1} and inequality \eqref{eq-claim}  there exists a properly exceptional set  $\mathcal{N}_{\pa,\delta,t,D}\subset M$ such that for all $x\in D\setminus \mathcal{N}_{\pa,\delta,t,D}$,
\begin{equation}\label{eq-invisible}
\P_x(\tau_D>t)\leq C_{\pa,\delta}\, e^{-\pa\left(1-\delta\right)\lambda\left(D\right)t}.
\end{equation}
We now use an argument inspired from \cite{Grigoryan-Hu-2014} by constructing a properly exceptional set. Let $E$ be a dense countable subset of $M$. Let $\mathcal{S}$ be the collection of balls  of rational radii centered at the points of $E$, i.e. 
\[
\mathcal{S}=\{B(x,r); x\in E, r\in \mathbb{Q}_+\}.
\] 
Let $\mathcal{T}$ be the collection of sets that are finite unions of balls in $\mathcal{S}$. For any set $U\in \mathcal{T}$, we can find an invisible set $\mathcal{N}_{\pa,\delta,t,U}$ as described above. Consider
\[
\mathcal{N}:=\bigcup_{U\in\mathcal{T},t\in \mathbb{Q}_+,\pa,\delta\in \mathbb{Q}_+\cap (0,1)  } \mathcal{N}_{\pa,\delta,t,U}
\]
Clearly $\mathcal{N}$ is also a properly exceptional set. We claim that it is the desired set. First for any given open $D\subset M$,  we can find
an increasing sequence $D_1\subset D_2\subset\cdots $ in $\mathcal{T}$ such that $D=\cup_{k=1}^\infty D_k$. Therefore for any $x\in D\setminus \mathcal{N}$, we know that there exists an $m$ such that for all $k\ge m$, it holds that $x\in D_k\setminus \mathcal{N}$ hence $x\in D_k\setminus \mathcal{N}_{\pa,\delta,t,D_k}$ for all $t\in \mathbb{Q}_+,\pa,\delta\in \mathbb{Q}_+\cap (0,1)$ and $k\ge m$.
By \eqref{eq-invisible} we then obtain that for all $k\ge m$ and $t\in \mathbb{Q}_+,\pa,\delta\in \mathbb{Q}_+\cap (0,1)$,
\[
\mathbb{P}_{x}\left(\tau_{D_k}>t\right)
= P_t^{D_k} 1_{D_k}(x)
\leq C_{\pa,\delta} e^{-\pa\left(1-\delta\right)t\lambda(D_k)}.
\]
Take $k\to\infty$ we obtain that 
\[
\lim_{k\to\infty}\mathbb{P}_{x}\left(\tau_{D_k}>t\right)\le \limsup_{k\to\infty}C_{\pa,\delta} e^{-\pa\left(1-\delta\right)t\lambda(D_k)}\le  C_{\pa,\delta} e^{-\pa\left(1-\delta\right)t\lambda(D)}.
\]
The last inequality comes from the fact that $D_{k}\subset D$ implies $\lambda\left(D_{k}\right)\geq\lambda\left(D\right)$.
Hence \eqref{eq-bound-Q} holds for any $t\in \mathbb{Q}_+,\pa,\delta\in \mathbb{Q}_+\cap (0,1)$. We then have that \eqref{eq-bound-Q} holds for all $t\in \R_+,\pa,\delta\in (0,1)$ using a dense argument.



\textbf{\underline{Step 4:}} Lastly, we obtain the sharp constants. From \eqref{eq:final_est_on_both} we have 
\[
\sup_{x\in D\setminus \mathcal{N}} \P_x(\tau_D>t) \leq  C_M\left(\frac{2-2\pa}{2-\pa}\right)^{-\frac{\beta^\prime-1}{\beta}\alpha\left(1-\frac{\pa}{2}\right)}\delta^{-\frac{\alpha}{\beta}\cdot\frac{\pa}{2}}e^{-\pa\left(1-\delta\right)t\lambda\left(D\right)}
\]
Let $1-\epsilon:=(1-\delta)\pa$, and $\mu=1-\pa$ then we have $\frac1\delta=\frac{\pa}{\pa-1+\epsilon}=\frac{1-\mu}{\epsilon-\mu}\le \frac{1}{\epsilon-\mu}$. Thus
\begin{align*}
\sup_{x\in D\setminus \mathcal{N}} \P_x(\tau_D>t) &\le  C_M\left(1+\frac1\mu\right)^{(\frac12+\frac{\mu}2)\frac{\beta^\prime-1}{\beta}\alpha} \left(\frac{1}{\epsilon-\mu} \right)^{\frac{\alpha}{\beta} \frac{1-\mu}{2}}e^{-(1-\epsilon ) t\lambda(D)}.
\end{align*}
Let $d':=\frac{\alpha}{\beta} \vee \frac{\beta^\prime-1}{\beta}\alpha$
and $\mu=\frac12\epsilon$, then the above inequality becomes
\begin{align}\label{eq-est-vogt}
\sup_{x\in D\setminus \mathcal{N}} \P_x(\tau_D>t) & \le C_M \left(1+\frac2\epsilon\right)^{(\frac12+\frac{\mu}2)d'} \left(\frac{2}{\epsilon} \right)^{d'(\frac{1-\mu}2)}e^{-(1-\epsilon ) t\lambda(D)}\notag\\
& \le  C_M\left(1+\frac2\epsilon\right)^{d'}e^{-(1-\epsilon ) t\lambda(D)}.
\end{align}
This completes the proof of Theorem \ref{thm:Prob-Estimate}.

\subsection{Proof of Corollary \ref{Cor:polynomial-bound}}\label{sec-proof-cor}

Let  $a:=t\lambda(D)/d'$ and
\[
E(\epsilon,a):=\left(1+\frac2\epsilon\right)e^{\epsilon a}.
\]
Then by \eqref{eq-est-vogt} we know that for any $\epsilon\in(0,1)$,
\[
\sup_{x\in D\setminus \mathcal{N}} \P_x(\tau_D>t) \le C E(\epsilon, a)^{d'}e^{-\lambda(D)t}.
\]
We then choose $\epsilon$ according to the following situations.
\begin{itemize}
\item[(1)] When $a\ge1$, by taking $\epsilon=\frac{1}{a}$ we have
\[
E(\epsilon, a)= \left(1+2a\right)e.
\]

\item[(2)] When $a<1$, by taking $e^{\epsilon a}\le 1+\frac{e^\epsilon-1}{\epsilon} a$ we obtain
\[
E(\epsilon, a)\le \left(1+\frac2\epsilon\right) \left(1+\frac{e^\epsilon-1}{\epsilon} a\right).
\]
Letting $\epsilon\to 1$ we have
\[
E(\epsilon, a)\le 9 (1+2 a).
\]
\end{itemize}
At the end we obtain that
\[
\sup_{x\in D\setminus \mathcal{N}} \P_x(\tau_D>t)  \le K \left(1+\frac{2\lambda(D)}{d'} \,t\right)^{d'}e^{-\lambda(D)t}.
\]
where $K>0$ is a constant that depends only on the geometric and analytical constants of $M$,  namely, dimension constants $\alpha$ and $\beta,\beta^\prime$, volume doubling constants $C_\alpha$, and sub-Gaussian bound constants $\cue$ and $\Cue$.

\section{Mean Exit Time Estimates and Sharp Lower Bound for the Survival Probability}\label{sec:Mean-Lower}


\subsection{Mean Exit Time Estimates}

As a consequence of Theorem \ref{thm:Prob-Estimate} we obtain spectral bounds for the mean exit time on metric measure spaces in the proposition below. This in turn gives a characterization for domains that have a positive bottom of the spectrum. The classical results of the type \eqref{Torsion} and  \eqref{eq:Intro-4} have been known in $\mathbb{R}^n$ in \cite{Banuelos-Carrol1994,Vandenberg-Carroll-2009,Giorgi-Smits-2010}.  Let $\esup_{x\in A} f$ be the essential supremum of a function $f$ on a set $A$ with respect to $\mu$.

\begin{prop}\label{Cor:main_product_bound} Under the same assumption as Theorem \ref{thm:Prob-Estimate}, we have

(1) For any domain $D\subset M$,
\begin{equation}
\lambda\left(D\right)>0\text{ if and only if }\esup_{x\in D}\mathbb{E}_x\left[\tau_{D}\right]<\infty.\label{eq:Thm:iff1}
\end{equation}

(2) For any $p>0,$ there exists a constant $C_{p}\geq\Gamma\left(p+1\right)$
such that for every domain $D\subset M$ satisfying $\lambda\left(D\right)>0$,
we have
\begin{equation}
\Gamma\left(p+1\right)\leq\lambda\left(D\right)^{p}\cdot\esup_{x\in D}\mathbb{E}_{x}\left[\tau_{D}^{p}\right]\leq C_{p},\label{eq:Thm:iff2}
\end{equation}
where $C_{p}$ is a constant depending only on $p$ and $\alpha,\alpha^{\prime},\beta$
and the constants in the assumptions of Theorem \ref{thm:Prob-Estimate}. The upper bound
holds everywhere outside a properly exceptional set. 

(3) For any domain $D\subset M$, if $\esup_{x\in D}\mathbb{E}_{x}\left[e^{a\tau_{D}}\right]<\infty$ {for some $a>0$}
then
\[
\lambda\left(D\right)>a+\frac{a}{\esup_{x\in D}\mathbb{E}_{x}\left[e^{a\tau_{D}}\right]-1}.
\]

\end{prop}

\begin{remark}\label{rk:Lower-Bound}
The lower bound in \eqref{eq:Thm:iff2} for the $p=1$ case has already been known in the literature; If $\esup_{x\in D}\mathbb{E}_{x}\left[\tau_{D}\right]<\infty$ then
\begin{equation}\label{eq:Lower-Bound}
\lambda(D)\esup_{x\in D}\mathbb{E}_{x}\left[\tau_{D}\right]\geq1, \quad \text{for any domain } D\subset M.
\end{equation}
See \cite{Banuelos-Carrol1994,Giorgi-Smits-2010} in $\R^n$, see \cite{Telcs-1995,Telcs-2001} for the setting of graphs and \cite{Grigoryan-Hu-2014} for the setting of metric measure spaces. In fact, the above holds without the assumptions of $\left(\VD\right)$ or
$\left(\UEF\right)$.

\end{remark}

\begin{remark}\label{rem:positive-spectrum-ex}

(1) Proposition \ref{Cor:main_product_bound} holds for any bounded set $D\subset M$ due to the fact that $\esup_{x\in D}\mathbb{E}_{x}\left[\tau_{D}\right]<\infty$. 

(2) One can also obtain the results of Proposition \ref{Cor:main_product_bound} for any set $D\subset M$ satisfying $\mu\left(D\right)<\infty$ with a reverse volume doubling assumption. This is because a  Faber-Krahn inequality holds for this situation (see Proposition \ref{MainThm2}) which imply that $\lambda\left(D\right)>0$.
\end{remark}


To prove Proposition \ref{Cor:main_product_bound} we will need two lemmas which we present below. In particular, Lemma \ref{Lem-Balance-Lowerbound} is a generalization of Remark \ref{rk:Lower-Bound} which may also be of independent interest. Its proof follows arguments found in \cite{Grigoryan-Hu-2014,Grigoryan-Telcs-2012}. Other arguments can be found in \cite{Banuelos-Carrol1994,Banuelos-Mariano-Wang-2020,Giorgi-Smits-2010} for other settings.

\begin{lemma}\label{Lem-Balance-Lowerbound}
Suppose that $g:\left[0,\infty\right)\to\left[0,\infty\right)$ and
$g\in L^{1}\left(\left[0,T\right)\right)$ for all $T\geq0$. If $\esup_{x\in D}\mathbb{E}_{x}\left[\int_{0}^{\tau_{D}}g\left(t\right)dt\right]<\infty$
then 
\begin{equation}
\int_{0}^{\infty}g\left(t\right)e^{-t\lambda\left(D\right)}dt\leq\esup_{x\in D}\mathbb{E}_{x}\left[\int_{0}^{\tau_{D}}g\left(t\right)dt\right].\label{eq:Lem-Balance-Lowerbound}
\end{equation}
\end{lemma}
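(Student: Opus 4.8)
The plan is to read the claimed inequality as an $L^1$--$L^\infty$ interpolation statement, in the spirit of the arguments of Grigoryan--Telcs: the probabilistic hypothesis is really a uniform (i.e.\ $L^\infty\to L^\infty$) bound for the operator $\int_0^\infty g(t)P_t^D\,dt$, and once this is combined with self-adjointness and interpolated down to $L^2(D)$ it is computed exactly, via the spectral theorem, in terms of $\lambda(D)=\inf\text{spec}\,\mathcal{L}_D$.

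First I would restate the hypothesis analytically. Put $M:=\esup_{x\in D}\mathbb{E}_x\big[\int_0^{\tau_D}g(t)\,dt\big]$. By Tonelli, $\mathbb{E}_x\big[\int_0^{\tau_D}g(t)\,dt\big]=\int_0^\infty g(t)\,\mathbb{P}_x(\tau_D>t)\,dt$, and since $\mathbb{P}_x(\tau_D>t)=(P_t^D 1_D)(x)$ for $\mu$-a.e.\ $x$, the hypothesis becomes $\int_0^\infty g(t)\,(P_t^D1_D)(x)\,dt\le M$ for $\mu$-a.e.\ $x\in D$. For $T>0$ introduce $A_T:=\int_0^T g(t)\,P_t^D\,dt$. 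Since $g\in L^1([0,T))$ and each $P_t^D$ is a symmetric, positivity-preserving contraction on every $L^p(D)$, the integral $A_T$ defines a symmetric, positivity-preserving bounded operator on $L^2(D)$ mapping each $L^p(D)$ into itself, and $0\le A_T1_D\le M$ $\mu$-a.e.; truncating in $T$ is a convenient device so that no a priori finiteness of $\int_0^\infty g(t)e^{-t\lambda(D)}\,dt$ is needed.

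Next I would run the interpolation. For $f\in L^\infty(D)$, positivity-preservation of $A_T$ gives $|A_Tf|\le A_T|f|\le\|f\|_\infty\,A_T1_D\le M\|f\|_\infty$, so $\|A_T\|_{L^\infty(D)\to L^\infty(D)}\le M$; dualizing (using that $A_T$ is self-adjoint) yields $\|A_T\|_{L^1(D)\to L^1(D)}\le M$; and Riesz--Thorin interpolation then gives $\|A_T\|_{L^2(D)\to L^2(D)}\le M$. On the spectral side, writing $P_t^D=\int_{[\lambda(D),\infty)}e^{-t\lambda}\,dE_\lambda^D$ and interchanging the (nonnegative) integrals in $t$ and $\lambda$, at the level of quadratic forms, shows $A_T=G_T(\mathcal{L}_D)$ with $G_T(\lambda):=\int_0^T g(t)e^{-t\lambda}\,dt$. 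Since $G_T$ is continuous and nonincreasing and $\text{spec}\,\mathcal{L}_D\subset[\lambda(D),\infty)$ with $\lambda(D)\in\text{spec}\,\mathcal{L}_D$, the spectral theorem gives $\|A_T\|_{L^2(D)\to L^2(D)}=\sup_{\lambda\ge\lambda(D)}G_T(\lambda)=G_T(\lambda(D))$. Combining the two computations, $\int_0^T g(t)e^{-t\lambda(D)}\,dt\le M$ for every $T$, and letting $T\to\infty$ by monotone convergence yields \eqref{eq:Lem-Balance-Lowerbound}.

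The steps requiring care -- all routine -- are the well-definedness and self-adjointness of $A_T$, the duality passage from the $L^\infty$ bound to the $L^1$ bound, and the Tonelli interchanges both in the definition of $A_T$ and in the spectral computation. The one genuinely structural point, which I expect to be the only mild obstacle, is the identity $\|A_T\|_{L^2(D)\to L^2(D)}=G_T(\lambda(D))$: it rests precisely on $\lambda(D)$ being the bottom of the spectrum together with the monotonicity of $G_T$, so that the supremum of $G_T$ over $\text{spec}\,\mathcal{L}_D$ is attained at $\lambda(D)$. The potentially awkward case $\mu(D)=\infty$, where $1_D\notin L^2(D)$, is harmless, since $A_T$ lives on $L^2(D)$ regardless and $1_D$ enters only through the pointwise bound $A_T1_D\le M$.
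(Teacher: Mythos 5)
Your proposal is correct and follows essentially the same route as the paper's proof: truncate to $G_{T}=\int_0^T g(t)P_t^D\,dt$, bound $\|G_T\|_{\infty\to\infty}$ by the hypothesis, dualize via self-adjointness to get the $L^1\to L^1$ bound, interpolate by Riesz--Thorin to $L^2$, and identify $\|G_T\|_{2\to2}=\int_0^T g(t)e^{-t\lambda(D)}\,dt$ via the spectral mapping theorem and the monotonicity of $\lambda\mapsto\int_0^T g(t)e^{-t\lambda}\,dt$, then let $T\to\infty$. The only differences are cosmetic (you phrase the $L^\infty$ bound through $G_T 1_D\le M$ rather than through $\mathcal{P}_t^D$ acting on bounded Borel $f$), so no further comment is needed.
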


\begin{proof}
Recall that $\mathcal{L}_{D}$ is the generator of the Dirichlet form
$\left(\mathcal{E},\mathcal{F}\left(D\right)\right)$ in $L^{2}\left(D,\mu\right)$.
For any $T>0,$ we define the operator 
\[
G_{T,g}=\int_{0}^{T}g\left(t\right)e^{-t\mathcal{L}_{D}}dt=\varphi_{T,g}\left(\mathcal{L}_{D}\right)
\]
where 
\[
\varphi_{T,g}\left(\lambda\right) =\int_{0}^{T}g\left(t\right)e^{-t\lambda}dt.
\]
Note that by the assumptions on $g$ we have that $\varphi_{T,g}$
is bounded and continuous on $\left[0,\infty\right)$. Thus $G_{T,p}$
is a bounded self-adjoint operator in $L^{2}\left(D,\mu\right)$.
Moreover, note that $\varphi_{T,g}$ is decreasing so that by the
spectral mapping theorem we have
\begin{align}
\varphi_{T,g}\left(\lambda\left(D\right)\right) & =\varphi_{T,g}\left(\inf\text{spec}\left(\mathcal{L}_{D}\right)\right)  =\sup\text{spec}\left(G_{T,g}\right)  =\left\Vert G_{T,g}\right\Vert _{2\to2}.\label{eq:Lem-Balance-0}
\end{align}
We now claim that 
\begin{equation}
\left\Vert G_{T,g}\right\Vert _{2\to2}\leq\esup_{x\in D}\mathbb{E}_{x}\left[\int_{0}^{\tau_{D}}g\left(t\right)dt\right]<\infty.\label{eq:Lem-Balance-1}
\end{equation}

The operator $P_{t}^{D}=e^{-t\mathcal{L}_{D}}$ extends to an operator
in $L^{\infty}\left(D\right)$, and hence so does $G_{T,g}$. We also
use the fact that $P_{t}^{D}$ and $\mathcal{P}_{t}^{D}$ coincide
as operators in $L^{\infty}\left(D\right)$. Thus for any bounded
Borel function $f$, we have the following holds $\mu-$ almost
everywhere,
\[
G_{T,g}f=\int_{0}^{T}g\left(t\right)\left(\mathcal{P}_{t}^{D}f\right)dt.
\]
For a.e. $x\in D$ we have that 
\begin{align*}
\left|G_{T,g}f\left(x\right)\right| & \leq\int_{0}^{\infty}g\left(t\right)\left(\mathcal{P}_{t}^{D}\left|f\right|\right)\left(x\right)dt =\int_{0}^{\infty}g\left(t\right)\mathbb{E}_{x}\left[\left|f\left(X_{t}\right)\right|1_{\left(\tau_{D}>t\right)}\right]dt\\
 & =\mathbb{E}_{x}\left[\int_{0}^{\tau_{D}}g\left(t\right)\left|f\left(X_{t}\right)\right|dt\right]
 \leq\mathbb{E}_{x}\left[\int_{0}^{\tau_{D}}g\left(t\right)dt\right]\sup_{x\in D}\left|f\right|\\
 & \leq\esup_{x\in D}\mathbb{E}_{x}\left[\int_{0}^{\tau_{D}}g\left(t\right)dt\right]\sup_{x\in D}\left|f\right|.
\end{align*}
Hence 
\begin{equation}
\left\Vert G_{T,g}\right\Vert _{\infty\to\infty}\leq\esup_{x\in D}\mathbb{E}_{x}\left[\int_{0}^{\tau_{D}}g\left(t\right)dt\right].\label{eq:Lem-Balance-2}
\end{equation}
This implies that for any $h\in L^{1}\left(D,\mu\right)\cap L^{2}\left(D,\mu\right)$
we have
\begin{align*}
\left\Vert G_{T,g}h\right\Vert _{1} & =\sup_{f\in C_{0}\left(D\right)\setminus\left\{ 0\right\} }\frac{\int\left(G_{T}h\right)fd\mu}{\left\Vert f\right\Vert _{\infty}} =\sup_{f\in C_{0}\left(D\right)\setminus\left\{ 0\right\} }\frac{\int h\left(G_{T}f\right)d\mu}{\left\Vert f\right\Vert _{\infty}}\\
 & \leq\esup_{x\in D}\mathbb{E}_{x}\left[\int_{0}^{\tau_{D}}g\left(t\right)dt\right]\left\Vert h\right\Vert _{1}
\end{align*}
so that 
\begin{equation}
\left\Vert G_{T,g}\right\Vert _{1\to1}\leq\esup_{x\in D}\mathbb{E}_{x}\left[\int_{0}^{\tau_{D}}g\left(t\right)dt\right].\label{eq:Lem-Balance-3}
\end{equation}
Using (\ref{eq:Lem-Balance-2}) and (\ref{eq:Lem-Balance-3}) with
the Riesz-Thorin interpolation theorem we have
\begin{align}
\left\Vert G_{T,g}\right\Vert _{2\to2} & \leq\left\Vert G_{T,g}\right\Vert _{1\to1}^{\frac{1}{2}}\left\Vert G_{T,g}\right\Vert _{\infty\to\infty}^{\frac{1}{2}}\nonumber \\
 & \leq\esup_{x\in D}\mathbb{E}_{x}\left[\int_{0}^{\tau_{D}}g\left(t\right)dt\right].\label{eq:Lem-Balance-4}
\end{align}
Thus putting $(\ref{eq:Lem-Balance-4})$ in (\ref{eq:Lem-Balance-0})
we have 
\[
\int_{0}^{T}g\left(t\right)e^{-t\lambda\left(D\right)}dt\leq\esup_{x\in D}\mathbb{E}_{x}\left[\int_{0}^{\tau_{D}}g\left(t\right)dt\right],
\]
 and taking $T\to\infty$ gives us the desired bound (\ref{eq:Lem-Balance-Lowerbound}).
\end{proof}

\begin{lemma}
Let $\epsilon\in\left(0,1\right)$ and $g:\left[0,\infty\right)\to\left[0,\infty\right)$. Assume the same assumptions as
Theorem \ref{thm:Prob-Estimate}. Then 
there exists a properly exceptional set $\mathcal{N}$ and a constant
$C_{\epsilon}\geq1$ such that 
\begin{equation}
\sup_{x\in D\backslash\mathcal{N}}\mathbb{E}_{x}\left[\int_{0}^{\tau_{D}}g\left(t\right)dt\right]\leq C_{\epsilon}\int_{0}^{\infty}g\left(t\right)e^{-(1-\epsilon)\lambda(D)t}dt.\label{eq:Lem-Balance-Upperbound}
\end{equation}
\end{lemma}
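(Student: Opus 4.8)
The plan is to deduce this bound directly from Theorem \ref{thm:Prob-Estimate} by integrating the survival-probability estimate against $g$. First I would record the elementary identity obtained from Tonelli's theorem: since $g \ge 0$, writing $\int_0^{\tau_D} g(t)\,dt = \int_0^\infty g(t)\,1_{\{t < \tau_D\}}\,dt$ and interchanging expectation and integration gives
\[
\mathbb{E}_x\!\left[\int_0^{\tau_D} g(t)\,dt\right] = \int_0^\infty g(t)\,\mathbb{P}_x(\tau_D > t)\,dt
\]
for every $x \in M$ (both sides taking values in $[0,\infty]$).

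Next, I would take $\mathcal{N} \subset M$ to be exactly the properly exceptional set furnished by Theorem \ref{thm:Prob-Estimate}. If $\lambda(D) = 0$, the trivial bound $\mathbb{P}_x(\tau_D > t) \le 1$ inserted into the identity above already yields $\mathbb{E}_x[\int_0^{\tau_D} g] \le \int_0^\infty g(t)\,dt$, which is precisely the right-hand side of \eqref{eq:Lem-Balance-Upperbound} in this case, so the claim holds with $C_\epsilon = 1$ and $\mathcal{N} = \emptyset$. If $\lambda(D) > 0$, I would instead insert the estimate \eqref{eq:Probet-N-set1}, applied with parameter $\epsilon$: for every $x \in D \setminus \mathcal{N}$ and every $t > 0$ one has $\mathbb{P}_x(\tau_D > t) \le C_M\left(1 + \tfrac{1}{\epsilon}\right)^{d'} e^{-(1-\epsilon)\lambda(D) t}$. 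Pulling the constant factor out of the $t$-integral then gives, for every such $x$,
\[
\mathbb{E}_x\!\left[\int_0^{\tau_D} g(t)\,dt\right] \le C_M\left(1 + \frac{1}{\epsilon}\right)^{d'} \int_0^\infty g(t)\, e^{-(1-\epsilon)\lambda(D) t}\,dt,
\]
and taking the supremum over $x \in D \setminus \mathcal{N}$ and setting $C_\epsilon := \max\left\{1,\, C_M(1 + 1/\epsilon)^{d'}\right\} \ge 1$ yields \eqref{eq:Lem-Balance-Upperbound}.

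I do not expect a genuine obstacle: the statement is essentially a repackaging of Theorem \ref{thm:Prob-Estimate} once one integrates against $g$. The only points deserving a word of justification are the use of Tonelli (legitimate since $g \ge 0$, with the hypothesis $g \in L^1([0,T))$ for all $T$ ensuring the quantities are finite on bounded time intervals) and the fact that the pointwise survival estimate is only available off $\mathcal{N}$ — which is exactly why the conclusion is phrased with $\sup_{x \in D \setminus \mathcal{N}}$ rather than a genuine supremum over $D$. This lemma together with Lemma \ref{Lem-Balance-Lowerbound}, specialized to $g(t) = p\,t^{p-1}$, will then sandwich $\lambda(D)^p\,\esup_{x \in D}\mathbb{E}_x[\tau_D^p]$ between $\Gamma(p+1)$ and a constant depending only on $p$ and the structural constants, which is the content of Corollary \ref{Cor:main_product_bound}.
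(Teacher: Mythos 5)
Your proposal is correct and is essentially the paper's own proof: both arguments use Tonelli to rewrite $\mathbb{E}_x\left[\int_0^{\tau_D} g(t)\,dt\right]$ as $\int_0^\infty g(t)\,\mathbb{P}_x(\tau_D>t)\,dt$ and then insert the survival-probability bound \eqref{eq:Probet-N-set1} from Theorem \ref{thm:Prob-Estimate} on $D\setminus\mathcal{N}$. Your extra remark handling $\lambda(D)=0$ via the trivial bound $\mathbb{P}_x(\tau_D>t)\le 1$ is a harmless refinement the paper leaves implicit.
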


\begin{proof}
This follows directly from Theorem \ref{thm:Prob-Estimate}, as for all $x\in D\backslash\mathcal{N}$
one has
\begin{align*}
&\mathbb{E}_{x}\left[\int_{0}^{\tau_{D}}g\left(t\right)dt\right]=  \mathbb{E}_{x}\left[\int_{0}^{\infty}g\left(t\right)1_{\left(\tau_{D}>t\right)}dt\right]\\
 &\quad\quad = \int_{0}^{\infty}g\left(t\right)\mathbb{P}_{x}\left(\tau_{D}>t\right)dt\leq  C_{\epsilon}\int_{0}^{\infty}g\left(t\right)e^{-(1-\epsilon)\lambda(D)t}dt,
\end{align*}
as desired.
\end{proof}

We are now ready to prove Proposition \ref{Cor:main_product_bound}. 

\begin{proof}[\textbf{Proof of Proposition \ref{Cor:main_product_bound}}]
Let $p>0$. Take $g(t)=pt^{p-1}.$ Then applying \eqref{eq:Lem-Balance-Lowerbound} from Lemma \ref{Lem-Balance-Lowerbound}
one has that if $\esup_{x\in D}\mathbb{E}_{x}\left[\int_{0}^{\tau_{D}}g\left(t\right)dt\right]=\esup_{x\in D}\mathbb{E}_{x}\left[\tau_{D}^{p}\right]<\infty$
then 
\[
\frac{\Gamma\left(p+1\right)}{\lambda\left(D\right)^{p}}=\int_{0}^{\infty}g\left(t\right)e^{-t\lambda\left(D\right)}dt\leq\esup_{x\in D}\mathbb{E}_{x}\left[\tau_{D}^{p}\right].
\]
If $\lambda\left(D\right)>0$, by applying (\ref{eq:Lem-Balance-Upperbound})
we have that 
\begin{align*}
\sup_{x\in D\backslash\mathcal{N}}\mathbb{E}_{x}\left[\int_{0}^{\tau_{D}}g\left(t\right)dt\right] & \leq C_{\epsilon}\int_{0}^{\infty}pt^{p-1}e^{-(1-\epsilon)\lambda(D)t}dt =\frac{C_{\epsilon}}{\left(1-\epsilon\right)^{p}}\frac{\Gamma\left(p+1\right)}{\lambda\left(D\right)^{p}}.
\end{align*}
This proves (\ref{eq:Thm:iff1}) and (\ref{eq:Thm:iff2}). 

It is left to prove part (3).  Suppose $\esup_{x\in D}\mathbb{E}_x\left[e^{a\tau_{D}}\right]<\infty$.
Thus for almost every $x\in D$
\[
\mathbb{E}_{x}\left[\int_{0}^{\tau_{D}}e^{at}dt\right]=\frac{1}{a}\mathbb{E}_{x}\left[e^{a\tau_{D}}-1\right]<\infty.
\]
Using (\ref{eq:Lem-Balance-Lowerbound}) with $g(t)=e^{at}$ we have
that 
\[
\frac{1}{\lambda\left(D\right)-a}=\int_{0}^{\infty}e^{at}e^{-t\lambda\left(D\right)}dt\leq\esup_{x\in D}\frac{1}{a}\mathbb{E}_{x}\left[e^{a\tau_{D}}-1\right]<\infty,
\]
Rearranging this inequality gives the desired result. 
\end{proof}

\subsection{Sharp Lower Bound}

We now give a result on the lower bound of the survival probability which matches with the rate of decay of the upper bound obtained in Theorem \ref{thm:Prob-Estimate}. This lower bound is general and does not assume sub-Gaussian bounds or volume doubling. 

\begin{prop}[Lower Bound]\label{Spectral_Lower_Bound}
Assume the metric measure space $\left(M,d,\mu\right)$ is endowed
with a regular, (not necessarily local) Dirichlet form $\left(\mathcal{E},\mathcal{F}\right)$.
Then 
\begin{equation}\label{eq:Lower_bound_pf1}
    e^{-\lambda\left(D\right)t}\leq\esup_{x\in D}\mathbb{P}_x\left(\tau_{D}>t\right).
\end{equation}

\end{prop}

\begin{proof}

{    

Let $a>0$. Suppose first that $\lambda(D)>0$. First recall that 
\begin{equation*}
\mathbb{E}_{x}\left[\int_{0}^{\tau_{D}}g\left(t\right)dt\right]=  \mathbb{E}_{x}\left[\int_{0}^{\infty}g\left(t\right)1_{\left(\tau_{D}>t\right)}dt\right]= \int_{0}^{\infty}g\left(t\right)\mathbb{P}_{x}\left(\tau_{D}>t\right)dt,
\end{equation*}
so that by Lemma \ref{Lem-Balance-Lowerbound} we have
\begin{equation}
\int_{0}^{\infty}g\left(t\right)e^{-t\lambda\left(D\right)}dt\leq\esup_{x\in D}\int_{0}^{\infty}g\left(t\right)\mathbb{P}_{x}\left(\tau_{D}>t\right)dt \leq \int_{0}^{\infty}g\left(t\right) \esup_{x\in D}\mathbb{P}_{x}\left(\tau_{D}>t\right)dt.\label{eq:Lem-Balance-Lowerbound-2}
\end{equation}
Applying \eqref{eq:Lem-Balance-Lowerbound-2} to  $g_{n}(t)=n1_{\left[a,a+\frac{1}{n}\right]}\left(t\right)$, which is allowed since $$\esup_{x\in D}\int_{0}^{\infty}n1_{\left[a,a+\frac{1}{n}\right]}\left(t\right)\mathbb{P}_{x}\left(\tau_{D}>t\right)dt\leq 1$$
and letting 
\[
f\left(t\right)=\esup_{x\in D}\mathbb{P}_{x}\left(\tau_{D}>t\right)
\]
we have 
\begin{equation}
n\int_{a}^{a+\frac{1}{n}}e^{-t\lambda\left(D\right)}dt\leq n\int_{a}^{a+\frac{1}{n}}f\left(t\right)dt.\label{eq:LebDiff-1}
\end{equation}
If $f$ is right-continuous then by letting $n\to\infty$
in $\left(\ref{eq:LebDiff-1}\right)$ gives us 
\[
e^{-a\lambda\left(D\right)}\leq\esup_{x\in D}\mathbb{P}_{x}\left(\tau_{D}>a\right),
\]
as needed. We devote the rest of the proof to show the right-continuity of $f$. 

First we claim $f$ is non-increasing. To do this, note that for a.e. $x\in D$, the map $t\mapsto\mathbb{P}_{x}\left(\tau_{D}>t\right)$
is non-increasing. For any $a<b$ and
a.e. $x\in D$ we have 
\[
\mathbb{P}_{x}\left(\tau_{D}>b\right)\leq\mathbb{P}_{x}\left(\tau_{D}>a\right)\leq\esup_{x\in D}\mathbb{P}_{x}\left(\tau_{D}>a\right)
\]
hence 
\[
f(b)=\esup_{x\in D}\mathbb{P}_{x}\left(\tau_{D}>b\right)\leq\esup_{x\in D}\mathbb{P}_{x}\left(\tau_{D}>a\right)=f(a)
\]
showing that $f$ is non-increasing as well. 

Now to see right-continuity of $f$, we take any sequence $t_{m}\downarrow a$ and show that $\lim_{m\to\infty}f(t_m)=f(a)$. Since
$f$ is non-increasing and $t_{m}$ decreases down to $a$ then 
\[
\lim_{m\to\infty}f\left(t_{m}\right)=\sup_{m:m\in\mathbb{N}}f\left(t_{m}\right)=\sup_{m:m\in\mathbb{N}}\esup_{x\in D}\mathbb{P}_{x}\left(\tau_{D}>t_{m}\right).
\]

By interchanging the order of supremum and the essential supremum, and using the fact that  $t\mapsto\mathbb{P}_{x}\left(\tau_{D}>t\right)$
is right-continuous which implies that $\sup_{m:m\in\mathbb{N}}\mathbb{P}_{x}\left(\tau_{D}>t_{m}\right)=\lim_{m\to\infty}\mathbb{P}_{x}\left(\tau_{D}>t_{m}\right)=\mathbb{P}_{x}\left(\tau_{D}>a\right)$, we have 
\[
\lim_{m\to\infty}f\left(t_{m}\right)= \esup_{x\in D}\sup_{m:m\in\mathbb{N}}\mathbb{P}_{x}\left(\tau_{D}>t_{m}\right) =\esup_{x\in D}\mathbb{P}_{x}\left(\tau_{D}>a\right)=f(a).
\]

}

If $\lambda (D)=0$, then we can still apply \eqref{eq:Lem-Balance-Lowerbound}, but the left-hand-side of \eqref{eq:Lower_bound_pf1} would be equal to 1, and the rest of the proof follows. 
\end{proof}

\section{Exit time asymptotics}\label{sec:smallball}

The large time asymptotic estimates for $\mathbb{P}_{x}\left(\tau_{D}>t\right)$ has been well studied in the Euclidean spaces. It is a classical result in $\R^n$ that 
\[
\lim_{t\to\infty}\frac{\log\mathbb{P}_{x}\left(\tau_{D}>t\right)}{t}=-\lambda\left(D\right)
\]
holds for all domains $D$. There are also nontrivial results concerning the case 
when $\lambda\left(D\right)=0$ (\cite{Banuelos-DeBlassie-Smits-2001,Wenbo-Li-2003,Lifshits-Shi-2002}). Numerous works have been devoted to studying this type of asymptotic for other stochastic
processes as well, such as the Doob $h$-conditional diffusion (see  \cite{DeBlassie-1988}), iterated Brownian
motion (see \cite{Nane-2007}), integrated Brownion motions (see (see \cite{Chen-Li-2003,Khoshnevisan-1998}),symmetric-stable process (see \cite{Mendez-Hernandez-2002}), and horizontal Brownian motion on Carnot groups (see \cite{Carfagnini-Gordina-2020,Carfagnini-Gordina_Carnot-2022}).

We  obtain such asymptotic estimates for $\mathbb{P}_{x}\left(\tau_{D}>t\right)$ in a general metric measure space setting. Before stating the result, we need the following definitions.

\begin{df}\label{def:irreducible}
We say a Borel set $A\subset M$ is \emph{$P_{t}-$ invariant }if
$P_{t}\left(1_{A}f\right)=1_{A}P_{t}f$, $\mu-$a.e. for any bounded function $f\geq0$
and $t>0$. 

We say a Dirichlet form $\left(\mathcal{E},\mathcal{F}\right)$
is \emph{irreducible} if any $P_{t}-$ invariant set satisfies either
$\mu\left(A\right)=0$ or $\mu\left(A^{c}\right)=0$. 

{    For any set $D\subset M$, we define the \emph{capacity} by $\text{Cap}\left(E\right)=\inf_{\varphi}\mathcal{E}_{1}\left(\varphi\right)$
where the infimum is taken over all $\varphi\in\mathcal{F}$ such
that $\varphi\geq1$ in an open neighborhood of $E$. }

\end{df}

It turns out that an asymptotic upper bound comes directly from Theorem \ref{thm:Prob-Estimate}, while the asymptotic lower bound holds under the irreducibility assumption.

\begin{prop}
\label{thm:Main_Asymptotic}
Under the same assumption as Theorem \ref{thm:Prob-Estimate}, there exists a properly exceptional set $\mathcal{N}\subset M$
such that for every domain $D\subset M$ we have 
\begin{equation}\label{eq:Asymp1'}
\limsup_{t\to\infty}\frac{\log\mathbb{P}_{x}\left(\tau_{D}>t\right)}{t}\leq-\lambda\left(D\right)
\end{equation}
for every $x\in D\backslash\mathcal{N}$. 

If in addition we assume the Dirichlet form $\left(\mathcal{E},\mathcal{F}(D)\right) $ is irreducible, then 
\begin{equation}
\lim_{t\to\infty}\frac{\log\mathbb{P}_{x}\left(\tau_{D}>t\right)}{t}=-\lambda\left(D\right)\label{eq:Asymp1} 
\end{equation}
for every  $x\in D\backslash\mathcal{N}$. 

\end{prop}
\begin{proof}
Suppose $\lambda (D)>0$. The upper bound \eqref{eq:Asymp1'} is an easy consequence of Theorem \ref{thm:Prob-Estimate}.

Now let us further assume that $\left(\mathcal{E},\mathcal{F}(D)\right) $
is irreducible. Then by \cite[Theorem 4.1]{Fukushima-Takeda-1984}, it has been shown that 
\begin{equation}
\liminf_{t\to\infty}\frac{\log\mathbb{P}_{x}\left(\tau_{D}>t\right)}{t}\geq-\lambda\left(D\right)\label{eq:asmp_proof_lower_1}
\end{equation}
holds quasi-everywhere, that is, it holds outside some set $E$ of capacity
$0$. In fact, this is true without having to assume $\left(\VD\right)$ or
$\left(\UE\right)$. To obtain
the lower bound in (\ref{eq:Asymp1}), it is sufficient to
assume only that the Dirichlet form is regular and irreducible. 


Now, using a similar argument as in the proof of \cite[Prop. 6.1]{Grigoryan-Hu-2014}, we know $E$ is contained inside some properly exceptional
set, hence (\ref{eq:asmp_proof_lower_1}) holds outside some properly
exceptional set. This give equality in \eqref{eq:Asymp1}.

If $\lambda (D)=0$, then \eqref{eq:asmp_proof_lower_1} is still true, hence $\liminf_{t\to\infty} \log\mathbb{P}_{x}\left(\tau_{D}>t\right)/t \geq  0$. Since $\mathbb{P}_{x}\left(\tau_{D}>t\right)\leq 1$ then $\limsup_{t\to\infty} \log\mathbb{P}_{x}\left(\tau_{D}>t\right)/t \leq 0$, giving the desired result in \eqref{eq:Asymp1}. 
\end{proof}

We point out that if the domain $D$ has a discrete spectrum, then \eqref{eq:Asymp1} follows immediately from the eigenfunction expansion. 

\begin{remark} {
Let $a>0$. By the Chernoff bound, we have the following for a random variable $X$: 
\begin{equation}\label{eq:exponential_int}
\mathbb{E}\left[e^{aX}\right]<\infty \quad \text{implies}  \quad  \liminf_{t\to\infty}-\frac{\log\mathbb{P}\left(X>t\right)}{t}>a. 
\end{equation}
Therefore combining with \eqref{eq:Asymp1}  we have  that for any domain $D\subset M$ and $x\in D$, 
\begin{equation}
\mathbb{E}_x\left[e^{a\tau_{D}}\right]<\infty \quad \text{ implies }\quad 
 \lambda\left(D\right)>a   .\label{eq:Thm:iff3}
\end{equation}
However, Proposition \ref{Cor:main_product_bound}-(3) provides a sharper bound than \eqref{eq:Thm:iff3}.
}

\end{remark}

Below we state a sufficient condition for irreducibility which is easier to work with in some cases.

\begin{prop}\label{prop:Irreducibility_cond}
Assume the metric space $\left(M,d,\mu\right)$ is endowed with a
regular Dirchlet form $\left(\mathcal{E},\mathcal{F}\right)$. If
the heat kernel $p\left(x,y,t\right)$ exists
and satisfies $p\left(x,y,t\right)>0$ for almost every  $\left(x,y\right)\in M\times M,t>0$,
then the Dirichlet form $\left(\mathcal{E},\mathcal{F}\right)$ is
irreducible. 
\end{prop}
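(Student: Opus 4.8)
The plan is to argue by contrapositive: suppose $\left(\mathcal{E},\mathcal{F}\right)$ is not irreducible, and deduce that the heat kernel must vanish on a set of positive measure. So assume there is a Borel set $A\subset M$ that is $P_t$-invariant with $\mu(A)>0$ and $\mu(A^c)>0$. By the definition of $P_t$-invariance, $P_t(1_A f)=1_A P_t f$ for every bounded $f\ge 0$ and every $t>0$. The key step is to unravel what this says about the kernel: for $f=1_{A^c}g$ with $g\ge 0$ bounded, we have $1_A f = 0$, hence $1_A P_t(1_{A^c}g)=0$ $\mu$-a.e., i.e. $P_t(1_{A^c}g)$ is supported (up to null sets) in $A^c$ for every such $g$. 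Written in terms of the heat kernel, this means that for $\mu$-a.e. $x\in A$,
\[
\int_{A^c} p(x,y,t)\, g(y)\,\mu(dy)=0
\]
for every bounded $g\ge 0$, and therefore $p(x,y,t)=0$ for $\mu$-a.e. $y\in A^c$. Running $y$ over $A^c$ and using Fubini, the set
\[
\left\{(x,y)\in M\times M : x\in A,\ y\in A^c,\ p(x,y,t)>0\right\}
\]
has $(\mu\times\mu)$-measure zero. Since $\mu(A)>0$ and $\mu(A^c)>0$, the set $A\times A^c$ has positive $(\mu\times\mu)$-measure, so $p(\cdot,\cdot,t)$ vanishes on a set of positive measure in $M\times M$. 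This contradicts the hypothesis that $p(x,y,t)>0$ for a.e.\ $(x,y)\in M\times M$, $t>0$. Hence no such $A$ exists and the form is irreducible.

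A couple of technical points I would want to handle carefully. First, the $P_t$-invariance identity is stated for bounded $f\ge 0$; to apply it to $1_{A^c}g$ one needs $g$ bounded and the product bounded, which is fine, and one can pass to general bounded measurable $g\ge 0$ by monotone approximation with simple functions. Second, the passage from ``$\int_{A^c}p(x,y,t)g(y)\,\mu(dy)=0$ for all bounded $g\ge 0$'' to ``$p(x,y,t)=0$ for a.e.\ $y\in A^c$'' is the standard fact that a nonnegative $L^1_{loc}$ function annihilated against all nonnegative bounded test functions must vanish a.e.; one should note $\mu$ is $\sigma$-finite (it is Radon with $M$ $\sigma$-compact, since $(M,d)$ is locally compact separable), so this is legitimate and one also needs to be slightly careful choosing the null set uniformly — measurability of $p$ in all three variables lets one invoke Fubini to get a single $\mu\times\mu$-null exceptional set rather than an $x$-dependent family.

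The main obstacle — really the only nontrivial point — is the measure-theoretic bookkeeping: making sure the $\mu$-null exceptional sets (one for each $x$, a priori) can be amalgamated into a single $(\mu\times\mu)$-null set, which requires joint measurability of $(x,y)\mapsto p(x,y,t)$ and $\sigma$-finiteness so that Fubini–Tonelli applies. This is routine but is the step where the hypotheses ``$p$ exists'' and ``a.e.\ positivity'' are genuinely used; everything else is a direct unwinding of definitions. I would write the argument in the contrapositive form above, citing $\sigma$-finiteness of $\mu$ and joint measurability of the heat kernel, and conclude in two short paragraphs.
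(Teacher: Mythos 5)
Your argument is correct and is essentially the paper's proof: both test the $P_t$-invariance of $A$ against the indicator of $A^c$ (resp. $A$), rewrite the resulting identity through the heat kernel, and use a.e.\ strict positivity of $p(\cdot,\cdot,t)$ to force $\mu(A^c)=0$ or $\mu(A)=0$. Your contrapositive phrasing with the Fubini amalgamation of null sets is just a slightly more careful bookkeeping of the a.e.\ identities than the paper's two-case pointwise argument, not a different method.
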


\begin{proof}
Take a set $A\subset M$
to be $P_{t}$-invariant, namely  $P_{t}\left(1_{A}f\right)=1_{A}P_{t}f$ holds for any bounded $f\geq0$. We aim to show that either $\mu\left(A\right)=0$
or $\mu\left(A^{c}\right)=0$. 

Take $f=1_{B}$ for a Borel set $B\subset M$.
Then 
\begin{align}\label{eq:irre_pf_1}
\mathbb{P}_{x}\left(X_{t}\in A\cap B\right)=P_{t}\left(1_{A}1_{B}\right)\left(x\right)  =
1_{A}\left(P_{t}1_{B}\right)\left(x\right)   =1_{A}\left(x\right)\mathbb{P}_{x}\left(X_{t}\in 
B\right).
\end{align}
We then consider cases:

Case 1: Suppose $x\in A$. Take $B=A^{c}$ in (\ref{eq:irre_pf_1})
so that 
\[
0=\mathbb{P}_{x}\left(X_{t}\in\emptyset\right)=\mathbb{P}_{x}\left(X_{t}\in A^{c}\right)=\int_{A^{c}}p\left(x,y,t\right)d\mu(y).
\]
Since $p>0$ a.e. this must mean that 
$
\mu\left(A^{c}\right)=0.
$

Case 2: Suppose $x\notin A$. Take $B=A$ in (\ref{eq:irre_pf_1})
so that 
\[
\int_{A}p\left(x,y,t\right)d\mu(y)=\mathbb{P}_{x}\left(X_{t}\in A\right)=0,
\]
Similarly, since
$p>0$ a.e. this must mean that 
$
\mu\left(A\right)=0.
$
\end{proof}




\section{Application to the Hot-spots constant}\label{sec:HotSpots}

The \textbf{Hot-Spots conjecture} says that for a Lipschitz planar domain $D$ in $\R^d$, the maximum and minimum of the second Neumann eigenfunction  $\varphi_{2}$ are  attained
 on the boundary of $D$.  It has been
proven true for a number of classes of domains in $\mathbb{R}^{d}$
\cite{lip_domains,Banuelos-Burdzy1999,Jerison,triangles,thin_tubes,Pascu,Rohleder-2021}, but false for some other domains  (see \cite{Burdzy-Werner1999,Kleefeld}). Hence the conjecture
fails for arbitrary planar domains. It is widely believed that the conjecture should hold true for all convex domains, but this is still open.

A recent result of Kleefeld in \cite{Kleefeld} finds a domain $D\subset\R^d$ in which the Hot Spots conjecture fails in the following quantitative way:
\[
\frac{\sup_{x\in D}\varphi_{2}\left(x\right)}{\sup_{x\in\partial D}\varphi_{2}\left(x\right)}>{1+10^{-3}}.
\]
In  \cite{Steinerberger-2021a}, Steinerberger showed  that, however,  the Hot-Spots conjecture can not 
fail by an arbitrary factor. In fact for any bounded smooth domain $D\subset\mathbb{R}^{d}$ with
smooth boundary, Steinerberger shows that
\begin{equation}\label{HotSpotIneq}
\sup_{x\in D}\varphi_{2}\left(x\right)\leq60\sup_{x\in\partial D}\varphi_{2}\left(x\right).
\end{equation}
Once an inequality of the form of \eqref{HotSpotIneq} is obtained, then the next natural question is on finding the best possible constant in the inequality. 
We call this best constant for the above inequality to hold the \emph{Hot Spots constant}. Recently the bound on the Hot Spots constant has been improved from $60$ to $5.11$ for bounded Lipschitz planar domains in \cite{Mariano-Panzo-Wang-2021}. The bound can also be improved to $\sqrt{e}\approx 1.6487$ asymptotically for large dimension. Clearly, if the Hot Spots Constant can be proved to be $1$ for certain classes of domains then the weak Hot Spots conjecture would be proven. 

One can also consider the Hot-spots conjecture in Riemannian settings such as the work of \cite{thin_tubes} for tubular neighborhoods of curves on an arbitrary two-dimensional Riemannian manifold. Let $M$ be a $d$--
dimensional complete Riemannian manifold with
non-negative Ricci curvature lower bound. Consider a bounded domain $D\subset M$
with smooth boundary. Let $\Delta$ be the Laplace-Beltrami operator on $M$ and $\varphi_{2}$ 
the first non-trivial eigenfunction for $-\Delta$ under
Neumann boundary conditions, i.e.,
\begin{align*}
-\Delta\varphi_{2} & =\mu_{2}\varphi_{2}\text{ in }D\\
\frac{\partial\varphi_2}{\partial\nu} & =0 \quad\ \ \text{ on }\partial D
\end{align*}
where $\mu_{2}>0$ is the associated eigenvalue. We let $\lambda_1(D)$ be the first Dirichlet eigenvalue.

 { 
As an application of Theorem \ref{thm:Prob-Estimate}, we obtain an upper bound for the Hot Spots constant in a Riemannian setting.  
Unlike the results of \cite{Mariano-Panzo-Wang-2021,Steinerberger-2021a}, we do not have an explicit constant on the Hot Spots constant as it depends on the constants from the assumptions of Theorem \ref{thm:Prob-Estimate}. Given a constant $\hs\in(0,1)$, we will consider a class of domains $\mathcal{D}_\hs$ such that
\begin{equation}\label{eq:Dirch-Neuman-constant}
\mu_{2}\left(D\right)\leq \hs \lambda_{1}(D),
\end{equation}
holds for all  $D\in \mathcal{D}_\hs$. The constant $\hs$ is known as the \emph{ratio upper bound} for the class $\mathcal{D}_v$. Such classes of domains are natural to consider such as in \cite[Definition 3]{Mariano-Panzo-Wang-2021}. The existence of such a constant $\hs<1$ in \eqref{eq:Dirch-Neuman-constant} for classes of domains on manifolds is itself an interesting question that we do not address here. 
In the Euclidean case, inequality \eqref{eq:Dirch-Neuman-constant} holds for any bounded Lipschitz domain with a  constant 
$\hs_{\mathbb{R}^d}=\frac{\mu_{2}\left(B\right)}{\lambda_{1}\left(B\right)}<1$
where $B$ is a Euclidean ball. This follows readily by combining the classical Faber-Krahn and
Szego-Weinberger \cite{Weinberger-1956,Szego-1954} inequalities (see also \cite[Lemma 1]{Mariano-Panzo-Wang-2021} for a proof showing $\hs_{\mathbb{R}^d}<1$). 
}

\begin{prop}
\label{thm:Hot-Spots}Let $M$ be a Riemannian manifold with $\mbox{Ric}\geq 0$ and fix $\hs\in\left (0,1\right)$. Let $\mathcal{D}_\hs$ be a class of bounded smooth domains $D\subset M$ where the Neumann heat kernel $K_D(x,y,t)$ can be extended to a continuous function on $\overline{D}\times \overline{D}\times (0,\infty)$, and
\[
\mu_{2}\left(D\right)\leq \hs\lambda_{1}(D),
\]
for all $D\in \mathcal{D}_\hs$.
Then there exists a universal constant $C_{\mathcal{D}_\hs}\geq1$ independent of domains in $\mathcal{D}_\hs$
such that 
\begin{equation}\label{eq:Thm-HS-1}
\sup_{x\in D}\varphi_{2}\left(x\right)\leq C_{\mathcal{D}_\hs}\sup_{x\in\partial D}\varphi_{2}\left(x\right).
\end{equation}
\end{prop}

\begin{proof}
The proof follows from the same arguments made in \cite{Mariano-Panzo-Wang-2021,Steinerberger-2021a} together with Theorem \ref{thm:Prob-Estimate}. We sketch the proof here for completeness. 

The approach is probabilistic and relies on the existence of the reflected Brownian motion on Riemannian manifolds, which is in fact well-known. We refer to \cite[Chapter V.]{Ikeda-Watanabe-1989} for the theory of reflected Brownian motion on Riemannian manifolds. 

First we note that it suffices to consider domains where the
Hot-Spots conjecture does not hold, that is $\sup_{x\in\partial D}\varphi_{2}\left(x\right)<\sup_{x\in D}\varphi_{2}\left(x\right)$.
The reason being that if the Hot-Spots Conjecture holds for $D$,
then it clear that (\ref{eq:Thm-HS-1}) holds with $C=1.$ Thus there
exists a $x_{0}\in D$ such that $\sup_{x\in D}\varphi_{2}\left(x\right)=\varphi_{2}\left(x_{0}\right).$
By repeating the same arguments as in \cite[Lemma 1]{Steinerberger-2021a}  or \cite[Lemma 4]{Mariano-Panzo-Wang-2021} for
reflecting Brownian motion on a Riemannian manifold, one can obtain
\begin{equation}\label{eq:HotSpotpf-1}
1\leq e^{\mu_{1}\left(D\right)t}\mathbb{P}_{x_{0}}\left(\tau_{D}>t\right)+e^{\mu_{1}\left(D\right)t}\left(1-\mathbb{P}_{x_{0}}\left(\tau_{D}>t\right)\right)\frac{\sup_{x\in\partial D}\varphi_{2}\left(x\right)}{\sup_{x\in D}\varphi_{2}\left(x\right)}.
\end{equation}
By Theorem \ref{thm:Prob-Estimate} and the verifications given in Section \ref{sec:subRiemannian} we have that for any $\epsilon\in\left(0,1\right)$ (to be chosen
later) there exists a constant $C_{\epsilon}>0$ such that
\begin{equation}\label{eq:HotSpotpf-2}
\sup_{x\in D}\mathbb{P}_{x}\left(\tau_{D}>t\right)\leq C_{\epsilon}e^{-\left(1-\epsilon\right)t\lambda_{1}\left(D\right)}
\end{equation}
for any $t>0$. Plugging \eqref{eq:HotSpotpf-2} into \eqref{eq:HotSpotpf-1} and applying the assumption that $\frac{\mu_{1}\left(D\right)}{\lambda_{1}\left(D\right)}\leq \hs<1$
for all $D\in\mathcal{D}_\hs$ we have that
\[
1\leq e^{\hs\lambda_{1}\left(D\right)t}C_{\epsilon}e^{-\left(1-\epsilon\right)t\lambda_{1}\left(D\right)}+e^{\hs\lambda_{1}\left(D\right)t}\left(1-C_{\epsilon}e^{-\left(1-\epsilon\right)t\lambda_{1}\left(D\right)}\right)\frac{\sup_{x\in\partial D}\varphi_{2}\left(x\right)}{\sup_{x\in D}\varphi_{2}\left(x\right)}.
\]
Pick $\epsilon$ small enough such that $\hs<\left(1-\epsilon\right)<1$
and let $t=\frac{a}{\lambda_{1}(D)}$ for some $a>0$ to be chosen
later. Then 
\begin{equation}
1\leq C_{\epsilon}e^{-\left(\left(1-\epsilon\right)-\hs\right)a}+e^{\hs a}\left(1-C_{\epsilon}e^{-\left(1-\epsilon\right)a}\right)\frac{\sup_{x\in\partial D}\varphi_{2}\left(x\right)}{\sup_{x\in D}\varphi_{2}\left(x\right)}.\label{eq:pf:HotSpot-1}
\end{equation}
Now pick $a>0$ big enough so that 
\[
1-C_{\epsilon}e^{-\left(\left(1-\epsilon\right)-\hs\right)a}>0\text{ and }1-C_{\epsilon}e^{-\left(1-\epsilon\right)a}>0
\]
hence by rearranging (\ref{eq:pf:HotSpot-1}) we can then obtain the
following bound
\[
\sup_{x\in D}\varphi_{2}\left(x\right)\leq e^{\hs a}\frac{1-C_{\epsilon}e^{-\left(1-\epsilon\right)a}}{1-C_{\epsilon}e^{-\left(\left(1-\epsilon\right)-\hs\right)a}}\sup_{x\in\partial D}\varphi_{2}\left(x\right),
\]
as desired. 
\end{proof}

The assumption \eqref{eq:Dirch-Neuman-constant} has been classically known to hold for all domains in the Euclidean setting. Next we give an example of this assumption holding on $S^d$.

\begin{example}[Domains in $S^d$]
Let $H\subset S^{d}$ be a Hemisphere in $S^{d}$.
By \cite[Equation (2.16)]{Ashbaugh-Levine-1997} it was shown that 
\begin{equation}
\frac{\mu_{2}\left(D\right)}{\lambda_{1}\left(D\right)}\leq\frac{\mu_{2}\left(D^{\star}\right)}{\lambda_{1}\left(D^{\star}\right)}<1\label{eq:Hot-Spot-sphere-1}
\end{equation}
for all domains $D\subset H\subset S^{d}$ such that $\left|D\right|=\left|D^{\star}\right|$
where $D^{\star}$ is a geodesic ball in $S^{d}$ contained in $H$.
As explained in \cite{Ashbaugh-Levine-1997}, the inequality $\frac{\mu_{2}\left(D\right)}{\lambda_{1}\left(D\right)}<1$
cannot hold for all smooth domains in $S^{d}$. This is because $\frac{\mu_{2}\left(B\right)}{\lambda_{1}\left(B\right)}=1$
when the geodesic ball $B$ is a Hemisphere, and since $\frac{\mu_{2}\left(B\right)}{\lambda_{1}\left(B\right)}\to\infty$
as a ball $B$ approaches the full sphere $S^{d}$. Fix $\hs\in\left(0,1\right)$
and let $\mathcal{D}_{\hs}$ be the class of all domains $D\subset H$
such that $\frac{\mu_{2}\left(D\right)}{\lambda_{1}\left(D\right)}<\hs$. By \eqref{eq:Hot-Spot-sphere-1} we know that $\mathcal{D}_{\hs}$ is non-empty. 
We can then conclude that Proposition \ref{thm:Hot-Spots}
holds for the class $\mathcal{D}_{\hs}$ of domains in $S^{d}$. 
\end{example}


\section{A new characterization of heat kernel upper bounds\label{sec:3-MainResults-b}}

As an application  of Theorem \ref{thm:Prob-Estimate} and Proposition \ref{Cor:main_product_bound}, in this section we prove equivalent conditions for the heat kernel upper estimate $\left(\UEF\right)$. 
There has been great interest by numerous authors in proving heat kernel bounds, equivalent conditions to heat kernel bounds and other related results in a metric measure space setting. See \cite{Besov2-2020,Grigoryan-Hu-Lau-2014,Kigami-2004,Lou-2018,Telcs-1985,Chen-Kim-Song-2010}, to name a few. 

We start by stating some conditions on $\lambda(D)$. Since domain monotonicity holds for $\lambda(D)$  it is natural in many cases to consider growth estimates for the first Dirichlet eigenvalues of balls. 
\begin{df}[$\lambF,\lambFge,\lambFle$]\label{def:lambda-beta} The metric measure space $(M, d,\mu)$ endowed with a Dirichlet form $(\cE, \cF)$ is said to satisfy  $\left(\lambF\right)$ if there exists constants $c_1,c_2>0$ such that for almost every $x\in M$ and $r>0$
\begin{equation}
c_1 F\left(r\right)^{-1} \leq \lambda\left(B\left(x,r\right)\right)\leq c_2 F\left(r\right)^{-1}.\label{Lambda-beta}
\end{equation}
It is said to satisfy the condition $\left(\lambFle\right)$ (and $\left(\lambFge\right)$ respectively) if
\[
 \lambda\left(B\left(x,r\right)\right)\leq c_2 F\left(r\right)^{-1}\  \text{and } \lambda\left(B\left(x,r\right)\right)\geq c_1 F\left(r\right)^{-1}\ \text{holds respectively.}
\]

If $F\left(r\right)=r^\beta$ then we call these conditions $\left(\lamb\right)$, $\left(\lamble\right)$, $\left(\lambge\right)$. 
\end{df}

The following generalizes the notion of a Faber-Krahn inequality for metric measure spaces.

\begin{df}[$\FKF$]\label{df:FK} The metric measure space $(M, d,\mu)$ endowed with a Dirichlet form $(\cE, \cF)$  is said to satisfy the \emph{Faber-Krahn inequality} with parameter function $F$ if there exist constants $\nu,c>0$ such that, for all balls $B\subset M$
of radius $r>0$ and for any nonempty open sets $D\subset B$, 
\begin{equation}
\lambda(D)\geq\frac{c}{F\left(r\right)}\left(\frac{\mu\left(B\right)}{\mu\left(D\right)}\right)^{\nu}.\label{FK}
\end{equation}
If $F\left(r\right)=r^\beta$ then we call this condition $\left(\FK\right)$. 
\end{df}

Below we state several conditions that concerns the growth of mean exit times from balls.
\begin{df}[$\EF$, $\EFge$, $\EFle$]\label{def-E-beta}
The metric measure space $(M, d,\mu)$ endowed with a Dirichlet form $(\cE, \cF)$ satisfies the condition $\left(\EF\right)$ with parameter function $F$ if there exists $c_1,c_2>0$ and a properly exceptional set $\mathcal{N}$ such that for all $x\in M\backslash\mathcal{N}$
and $r>0$
\begin{equation}
c_1 F\left(r\right) \leq \mathbb{E}_{x}\left[\tau_{B(x,r)}\right]\leq c_2 F\left(r\right).\label{E-beta}
\end{equation}
It is said to satisfy the condition $\left(\EFle\right)$ (and $\left(\EFge\right)$ respectively) if
\[
\mathbb{E}_{x}\left[\tau_{B(x,r)}\right]\leq c_2 F\left(r\right)\  \text{and } \mathbb{E}_{x}\left[\tau_{B(x,r)}\right]\geq c_1 F\left(r\right)\ \text{holds respectively.}
\]

If $F\left(r\right)=r^\beta$ then we call these conditions $\left(\Eb\right)$, $\left(\Eble\right)$, $\left(\Ebge\right)$.

\end{df}
The condition stated below is an isoperimetric-type inequality that concerns the growth of the mean exit time of $X_t$ from a domain.

\begin{df}[$\EOF$]\label{def-EO}
 We say that the metric measure space $(M, d,\mu)$ endowed with a Dirichlet form $(\cE, \cF)$ satisfy the condition $\left(\EOF\right)$ with parameter function $F$ if there exists a properly exceptional set $\mathcal{N}\subset M$ and
positive constants $C,\nu$ such that, for all balls $B$ in $M$
of radius $r$ and any non-empty open sets $D\subset B$, 
\[
\sup_{x\in D\setminus\mathcal{N}}\mathbb{E}_{x}\left[\tau_{D}\right]\leq C F(r)\left(\frac{\mu\left(D\right)}{\mu\left(B\right)}\right)^{\nu}.
\]
If $F(r)=r^\beta$ then we call this condition $\left(\EO\right)$.

\end{df}

Lastly we introduce a condition that restricts the optimal starting point for a Markov process for the purpose of prolonging its lifetime in a ball. It shall be used in obtaining equivalent conditions for  $\left(\UE\right)$  in Proposition \ref{MainThm2}.

\begin{df}[$\Ebar$]\label{def:e-bar} The metric measure space $(M, d,\mu)$ endowed with a Dirichlet form $(\cE, \cF)$ satisfies the condition $\left(\Ebar\right)$ if there exists a properly exceptional set $\mathcal{N}\subset M$ and
a positive constant $C>0$ such that for all $x\in M\backslash\mathcal{N}$
and $r>0$, 
\begin{equation}\label{eq:e-bar}
C\sup_{y\in B\left(x,r\right)\backslash\mathcal{N}}\mathbb{E}_{y}\left[\tau_{B\left(x,r\right)}\right]\leq\mathbb{E}_{x}\left[\tau_{B\left(x,r\right)}\right].
\end{equation}
\end{df}

This condition essentially says that the center of a ball is the optimal starting point for prolonging the mean exit time. We also introduce the following ``Harnack-type inequality" for mean exit times. 
\begin{df}[$\Ebarp$]\label{df:e-bar-prime}
The metric measure space $\left(M,d,\mu\right)$ endowed with a Dirichlet
form $\left(\mathcal{E},\mathcal{F}\right)$ satisfies the condition
$\left(\Ebarp\right)$ if there exists a constant
$C>0$ and $\delta\in\left(0,1\right)$ such that for all
$r>0$ and all $x\in M$
\begin{equation}
C\esup_{y\in B\left(x, r\right)}\mathbb{E}_{y}\left[\tau_{B\left(x,r\right)}\right]\leq\einf _{y\in B(x,\delta  r)}\mathbb{E}_{y}\left[\tau_{B\left(x,r\right)}\right].
\end{equation}
\end{df}

We also have the following condition on the metric measure space. 
\begin{df}[$\RVD$]\label{def-RVD}  The metric measure space $(M,d,\mu)$ is said to satisfy the
\emph{reverse volume doubling property with parameter $\alpha'$} if there exist a constant $c_{\alpha^\prime}>0$ such that 
\begin{equation}
\frac{V(x,R)}{V(x,r)}\geq  c_{\alpha^\prime}
\left(\frac{R}{r}\right)^{\alpha^{\prime}}\text{ for all }x\in M\text{ and for all }0<r\leq R.\label{eq:RVD-Estimate}
\end{equation}
\end{df}

As another interesting application  of Theorem \ref{thm:Prob-Estimate}, the result below provides a set of equivalent conditions for the sub-Gaussian upper heat kernel bound that involves the Faber-Krahn inequality, the regular spectral growth on balls, and a Harnack-type inequality for exit times. 

\begin{prop}\label{MainThm2}
Let $\left(M,d,\mu\right)$ be a metric measure space
and assume $\mu$ satisfies $\left(\VD\right)$ and $\left(\RVD\right)$ for some $\alpha, \alpha'>0$. Let $\left(\mathcal{E},\mathcal{F}\right)$
be a regular, local, conservative Dirichlet form in $L^{2}\left(M,\mu\right)$.
Then, the following equivalence is true:
\begin{align}
\left(\UEF\right) & \iff\left(\FKF\right)+\left(\lambFle\right)+\left(\Ebar\right)\label{eq:Main5}\\
 & \iff\left(\FKF\right)+\left(\lambFle\right)+\left(\Ebarp\right).\label{eq:Main5b}
\end{align}
\end{prop}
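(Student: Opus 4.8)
The plan is to bootstrap off the \emph{known} characterization of sub-Gaussian upper bounds on doubling spaces: under $\left(\VD\right)$, the estimate $\left(\UEF\right)$ is equivalent to $\left(\FKF\right)$ together with a mean-exit-time / survival condition for metric balls (see \cite{Grigoryan-Hu-2014,Grigoryan-Hu-Lau-2014,Kigami-2004}). The new content — and where Theorem~\ref{thm:Prob-Estimate} enters — is that this exit-time ingredient can be traded for the transparent pair $\left(\lambFle\right)+\left(\Ebar\right)$, resp.\ $\left(\lambFle\right)+\left(\Ebarp\right)$. Write $B_r=B(x,r)$. I first record a free observation: applying $\left(\FKF\right)$ with $D=B=B_r$ gives $\lambda(B_r)\ge c\,F(r)^{-1}$, i.e.\ $\left(\lambFge\right)$ holds; hence, under $\left(\FKF\right)$, $\left(\lambFle\right)$ is the same as $\lambda(B_r)\asymp F(r)^{-1}$.

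\emph{Forward direction} $\left(\UEF\right)\Rightarrow\left(\FKF\right)+\left(\lambFle\right)+\left(\Ebar\right)$ (and likewise with $\left(\Ebarp\right)$). That $\left(\UEF\right)$ forces $\left(\FKF\right)$ is classical: the on-diagonal consequence $p_M(x,x,t)\le \Cue/V(x,\mathcal{R}(t))$ together with $\left(\VD\right)$ gives the local Faber--Krahn inequality by Grigor'yan's Nash-type argument. From the sub-Gaussian tail one gets, by a strong-Markov/maximal-inequality argument, $\mathbb{P}_x(\tau_{B_r}\le t)\le C\exp(-\tfrac12 t\,\Phi(c'r/t))$; choosing $t=\delta F(r)$ with $\delta$ small makes the right side $<1$, so $\mathbb{E}_x[\tau_{B_r}]\ge c_1F(r)$ off a properly exceptional set, hence $\esup_{y\in B_r}\mathbb{E}_y[\tau_{B_r}]\ge c_1F(r)$. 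Applying Corollary~\ref{Cor:main_product_bound} to $D=B_r$ gives $\lambda(B_r)\,\esup_{y\in B_r}\mathbb{E}_y[\tau_{B_r}]\le C_1$, and combining the two bounds yields $\left(\lambFle\right)$. Feeding $\left(\lambFle\right)$ and $\left(\lambFge\right)$ back into Corollary~\ref{Cor:main_product_bound}, $\esup_{y\in B_r}\mathbb{E}_y[\tau_{B_r}]\le C_1/\lambda(B_r)\le CF(r)\le Cc_1^{-1}\mathbb{E}_x[\tau_{B_r}]$, which is $\left(\Ebar\right)$; and for fixed $\delta\in(0,1)$ and $y\in B(x,\delta r)$ one has $\mathbb{E}_y[\tau_{B_r}]\ge \mathbb{E}_y[\tau_{B(y,(1-\delta)r)}]\ge c_1F((1-\delta)r)\ge c_1C_F^{-1}(1-\delta)^{\beta^\prime}F(r)$ by \eqref{eq:F-regularity}, so the same upper bound for $\esup_{y\in B_r}\mathbb{E}_y[\tau_{B_r}]$ also gives $\left(\Ebarp\right)$.

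\emph{Reverse direction} $\left(\FKF\right)+\left(\lambFle\right)+\left(\Ebar\right)\Rightarrow\left(\UEF\right)$ (and likewise with $\left(\Ebarp\right)$). Here $\left(\FKF\right)+\left(\VD\right)$ again supplies the on-diagonal bound $p_D(z,z,t)\le C/V(z,\mathcal{R}(t))$ for killed semigroups (in particular the heat kernel exists). Splitting $\mathbb{E}_x[\tau_{B_r}]=\int_0^{F(r)}\mathbb{P}_x(\tau_{B_r}>t)\,dt+\int_{F(r)}^\infty\mathbb{P}_x(\tau_{B_r}>t)\,dt$, bounding the first integral by $F(r)$ and the second — using $\mathbb{P}_x(\tau_{B_r}>t)\le Ce^{-\lambda(B_r)(t-F(r))}$, obtained from the on-diagonal bound, Cauchy--Schwarz and a uniform trace bound $\mathrm{tr}\,P^{B_r}_{F(r)}\le C$ (consequences of $\left(\VD\right)$ and $\left(\RVD\right)$) — by $C/\lambda(B_r)\le Cc^{-1}F(r)$ via $\left(\lambFge\right)$, one gets $\mathbb{E}_x[\tau_{B_r}]\le C_3F(r)$ for all $x$. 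Conversely $\left(\lambFle\right)$ and the universal inequality of Remark~\ref{rk:Lower-Bound} give $\esup_{y\in B_r}\mathbb{E}_y[\tau_{B_r}]\ge\lambda(B_r)^{-1}\ge c_2^{-1}F(r)$, and $\left(\Ebar\right)$ promotes this to $\mathbb{E}_x[\tau_{B_r}]\ge Cc_2^{-1}F(r)$; thus $\mathbb{E}_x[\tau_{B_r}]\asymp F(r)$. A one-step Markov-property estimate
\[
\mathbb{E}_x[\tau_{B_r}]\le \delta F(r)+\mathbb{P}_x(\tau_{B_r}>\delta F(r))\bigl(\delta F(r)+\esup_{y\in B_r}\mathbb{E}_y[\tau_{B_r}]\bigr)
\]
combined with the two-sided bound on $\mathbb{E}_\cdot[\tau_{B_r}]$ then yields a survival estimate $\mathbb{P}_x(\tau_{B_r}>\delta F(r))\ge\epsilon$ for $\delta$ small, uniformly in $x$ off the exceptional set — precisely the exit-time ingredient that, together with $\left(\FKF\right)$ and $\left(\VD\right)$, is known to imply $\left(\UEF\right)$. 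The case $\left(\lambFle\right)+\left(\Ebarp\right)$ is handled the same way after noting $\einf_{y\in B(x,\delta r)}\mathbb{E}_y[\tau_{B_r}]\ge cF(r)$ as in the forward step; comparing the two also shows $\left(\Ebar\right)\Leftrightarrow\left(\Ebarp\right)$ under $\left(\FKF\right)+\left(\lambFle\right)+\left(\VD\right)+\left(\RVD\right)$, which closes the triangle.

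\emph{Main obstacle.} The delicate point is the reverse direction: one must squeeze a genuinely two-sided control $\mathbb{E}_x[\tau_{B_r}]\asymp F(r)$, and ultimately the survival estimate with the correct time scale $\delta F(r)$, out of hypotheses that only speak of balls and of $\lambda$ — all with constants uniform in $x,r$, and while reconciling the several properly exceptional sets coming from $\left(\Ebar\right)$, $\left(\Ebarp\right)$ and Corollary~\ref{Cor:main_product_bound}. The $\left(\RVD\right)$ hypothesis is used exactly to make the trace / $\int_D\sqrt{p_D(z,z,t)}\,d\mu(z)$ estimates uniform, and Corollary~\ref{Cor:main_product_bound} (hence Theorem~\ref{thm:Prob-Estimate}) is what lets $\left(\lambFle\right)$, rather than an opaque exit-time condition, serve as the companion of $\left(\FKF\right)$.
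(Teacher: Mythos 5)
Your argument is correct in substance and shares the paper's overall skeleton (lean on the Grigor'yan--Hu(--Lau) characterizations of $\left(\UEF\right)$ plus the new spectral bounds of Theorem~\ref{thm:Prob-Estimate}/Corollary~\ref{Cor:main_product_bound}, and recover the exit-time lower bound $\left(\EFge\right)$ from $\left(\FKF\right)+\left(\lambFle\right)+\left(\Ebar\right)$ via the universal inequality $\lambda(B)\esup_y\mathbb{E}_y[\tau_B]\ge 1$, exactly as in Lemma~\ref{lem:E-beta-lower}), but it diverges in the specific reductions. In the forward direction the paper obtains $\left(\lambFle\right)$ by integrating the survival bound of Theorem~\ref{thm:Prob-Estimate} against the survival condition $\left(\text{S}_F\right)$ from \cite{Grigoryan-Hu-2014}, and gets $\left(\Ebar\right),\left(\Ebarp\right)$ from the cited two-sided condition $\left(\EF\right)$ by a coupling/inner-ball argument; you instead re-derive the lower half of $\left(\EF\right)$ from the sub-Gaussian tail and then use Corollary~\ref{Cor:main_product_bound} for the upper half --- equally valid, and arguably more self-contained. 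The real difference is the backward direction: the paper passes from $\left(\EFge\right)$ to $\left(\mathbf{E}_{F\geq}^\prime\right)$ and directly invokes the known implication $\left(\FKF\right)+\left(\mathbf{E}_{F\geq}^\prime\right)\implies\left(\UEF\right)$, which needs \emph{only} a lower exit-time bound; you instead reconstruct a pointwise survival estimate $\mathbb{P}_x(\tau_{B_r}>\delta F(r))\ge\epsilon$ via a one-step Markov inequality, which forces you to also prove the upper bound $\esup_y\mathbb{E}_y[\tau_{B_r}]\le CF(r)$ from $\left(\FKF\right)$ through an on-diagonal/Cauchy--Schwarz (or trace) argument, and then to transfer the estimate from centers to a.e.\ point of $\tfrac14 B$ to match the form of $\left(\text{S}_F\right)$ before citing $\left(\UEF\right)\iff\left(\FKF\right)+\left(\text{S}_F\right)$. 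That detour is workable (the killed on-diagonal bound does follow from $\left(\FKF\right)$ as in Step~1 of the paper, and your trace bound needs only $\left(\VD\right)$), but it is extra machinery the paper avoids, and in your write-up it is only sketched; if you flesh it out, spell out the quarter-ball transfer and the bookkeeping of the properly exceptional sets in the Markov step (replace $\esup$ by a supremum off $\mathcal{N}$ and use that $X_t$ avoids $\mathcal{N}$), both of which you flag but do not carry out.
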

\begin{remark}
Note that $\left(\lambFle\right)$ is not necessary in order to obtain the equivalence to $\left(\UEF\right)$. Though we point out that \eqref{eq:Main5} and \eqref{eq:Main5b} are true when $\left(\lambFle\right)$ is replaced with $\left(\lambF\right)$. This is explained in Lemma \ref{prop:Main-Prop-1}.
\end{remark}
\begin{remark}
In \cite{Grigoryan-Hu-2014}, several sets of  equivalent conditions to sub-Gaussian heat kernel bounds $\left(\UE\right)$ are given. In particular they obtain that 
\begin{align}
\left(\UE\right) 
 & \iff
 \left(\EO\right)+\left(\Eb\right).\label{thm:Grigoryan-Hu-3}
\end{align}
 In \cite{Grigoryan-Hu-Lau-2014} the authors asked if the spectral analogue of the above equivalence,
  \[
  \left(\UE\right)  \iff\left(\FK\right)+\left(\lamb\right)
  \]
  could hold. This question is still open. 
 Roughly speaking, $\left(\Ebar\right), \left(\Ebarp\right)$  in \eqref{eq:Main5} and \eqref{eq:Main5b} says that the optimal location for the maximal mean exit time from a ball is close to the center of the ball.  It is not clear whether $\left(\Ebar\right), \left(\Ebarp\right)$ are removable in a different approach. Though we point out that it is a natural condition and have already been considered in the works of \cite{Telcs-2006-Ebar2,Telcs-2006-Ebar1} in the setting of graphs. 
\end{remark}


We devote the rest of this section to the proof of Proposition \ref{MainThm2}, which is split into the Lemmas \ref{prop:Main-Prop-1}, \ref{lem:UE_implies_E_bar_prime} and \ref{lem:E-beta-lower}.

\begin{lemma}
\label{prop:Main-Prop-1}
Under the same assumptions of Proposition \ref{MainThm2}, we have that 
\[
\left(\UEF\right)
\implies\left(\FKF\right)+\left(\lambF\right).
\]
\end{lemma}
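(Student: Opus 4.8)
The plan is to split the target into three pieces — $\left(\FKF\right)$, the lower half of $\left(\lambF\right)$, and the upper half of $\left(\lambF\right)$ — of which only the last requires real work. The first two come for free: the sub-Gaussian bound \eqref{UE} contains the on-diagonal estimate $p_M(x,x,t)\le \Cue/V(x,\mathcal{R}(t))$ (set $y=x$, so that $\Phi(0)=0$), and by \cite[Theorem~2.1]{Grigoryan-Hu-2014} this together with $\left(\VD\right)$ yields the relative Faber--Krahn inequality $\left(\FKF\right)$ — exactly the implication already invoked in Step~1 of the proof of Theorem~\ref{thm:Prob-Estimate}. Applying $\left(\FKF\right)$ with the ball $B=B(x,r)$ and the open set $D=B(x,r)$ gives $\lambda(B(x,r))\ge c\,F(r)^{-1}$, i.e.\ $\left(\lambFge\right)$. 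So it remains to prove the matching upper bound $\lambda(B(x,r))\le c_2\,F(r)^{-1}$ for all $x$ and $r>0$.

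The core step is a short-time, near-centre \emph{survival lower bound}: I would show that there exist $\delta,\theta\in(0,1)$, depending only on $\alpha,\beta,\beta^{\prime}$ and the constants in $\left(\VD\right)$ and $\left(\UEF\right)$, such that for every $r>0$ and a.e.\ $x\in M$ one has $\mathbb{P}_x\!\left(\tau_{B(x,r)}>\delta F(r)\right)\ge\theta$. To prove it I would use the standard exit-time decomposition at $t:=\delta F(r)$: splitting on whether $X_t\in B(x,r/2)$, using that a diffusion has continuous paths (so on $\{\sigma\le t\}$ with $\sigma:=\tau_{B(x,r)}$ one has $d(x,X_\sigma)=r$, hence on $\{\sigma\le t,\ X_t\in B(x,r/2)\}$ the path has moved by more than $r/2$ after time $\sigma$) and the strong Markov property at $\sigma$, one gets
\[
\mathbb{P}_x\!\left(\tau_{B(x,r)}\le t\right)\ \le\ \mathbb{P}_x\!\left(X_t\notin B(x,r/2)\right)\ +\ \sup_{z\in M,\ 0<s\le t}\mathbb{P}_z\!\left(X_s\notin B(z,r/2)\right).
\]
Each of the two terms equals $\int_{B(z,r/2)^c}p_M(z,y,s)\,d\mu(y)$, which is controlled by $\left(\UEF\right)$, the lower bound \eqref{eq:Phi-lower-bound} for $t\Phi(\cdot)$, Lemma~\ref{prop:Change-of-Variables} applied with a power-type $\varphi$, and $\left(\VD\right)$ together with \eqref{eq:R-regularity} — precisely the computation carried out for the term $II$ in the proof of Theorem~\ref{thm:Prob-Estimate}, only simpler, since no H\"older splitting is needed. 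It produces a bound $\int_{B(z,r/2)^c}p_M(z,y,s)\,d\mu(y)\le\psi\!\left(s/F(r/2)\right)$ with $\psi$ increasing and $\psi(0^+)=0$, depending only on the structural constants; since $s\le t=\delta F(r)\le \delta C_F 2^{\beta^{\prime}}F(r/2)$ by \eqref{eq:F-regularity}, choosing $\delta$ so that $2\psi\!\left(\delta C_F 2^{\beta^{\prime}}\right)\le\tfrac12$ gives $\mathbb{P}_x\!\left(\tau_{B(x,r)}\le t\right)\le\tfrac12$, i.e.\ $\theta=\tfrac12$. The passage from ``a.e.\ $x$'' to ``$x$ outside one fixed properly exceptional set'' is handled as in Step~3 of the proof of Theorem~\ref{thm:Prob-Estimate}, via quasi-continuous realizations and Proposition~\ref{GH-Prop6.1}.

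Granting this, $\left(\lambFle\right)$ follows from a one-line spectral argument. Since $\mathcal{L}_{B(x,r)}\ge 0$ is self-adjoint with $\inf\mathrm{spec}=\lambda(B(x,r))$, we have $\|P^{B(x,r)}_t\|_{2\to2}=e^{-t\lambda(B(x,r))}$; I would test this against $f=1_{B(x,r/2)}$. For a.e.\ $y\in B(x,r/4)$, path continuity gives $P^{B(x,r)}_t f(y)\ge\mathbb{P}_y\!\left(\tau_{B(y,r/4)}>t\right)$, because staying in $B(y,r/4)\subseteq B(x,r/2)\cap B(x,r)$ up to time $t$ forces $X_t\in B(x,r/2)$ and $\tau_{B(x,r)}>t$; with $t:=\delta F(r/4)$ the survival bound above (applied with radius $r/4$) makes this $\ge\tfrac12$. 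Hence
\[
e^{-2t\lambda(B(x,r))}\ \ge\ \frac{\bigl\|P^{B(x,r)}_t 1_{B(x,r/2)}\bigr\|_2^2}{\bigl\|1_{B(x,r/2)}\bigr\|_2^2}\ \ge\ \frac{\tfrac14\,V(x,r/4)}{V(x,r/2)}\ \ge\ \frac{1}{4C_{VD}}
\]
by $\left(\VD\right)$, so that $\lambda(B(x,r))\le \dfrac{\log(4C_{VD})}{2\delta\,F(r/4)}\le \dfrac{c_2}{F(r)}$ by one more application of \eqref{eq:F-regularity}. Together with the two reductions above, this proves $\left(\UEF\right)\Rightarrow\left(\FKF\right)+\left(\lambF\right)$.

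The one genuinely technical point is the survival lower bound; I expect it to be the main obstacle, but it is a routine rerun of the $II$-estimate and of Lemma~\ref{prop:Change-of-Variables} already established in the paper, combined with the usual strong-Markov/continuity decomposition of the exit time — so the difficulty is bookkeeping (in particular the measure-theoretic clean-up with properly exceptional sets) rather than anything conceptually new.
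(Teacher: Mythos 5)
Your proposal is correct, but for the key half --- the upper bound $\left(\lambFle\right)$ --- it takes a genuinely different route from the paper. The two easy reductions are the same as the paper's: $\left(\FKF\right)$ is quoted from \cite[Theorem 2.1]{Grigoryan-Hu-2014}, and $\left(\lambFge\right)$ follows by taking $D=B$ in $\left(\FKF\right)$. For the upper bound, the paper does \emph{not} re-derive a survival estimate: it extracts the condition $(\mathrm{S}_F)$ (a lower bound $\eta\le P_t^{B}1_B$ a.e.\ on $\tfrac14 B$ for all $t\le\eta' F(r)$) directly from the cited equivalence $\left(\UEF\right)\iff\left(\FKF\right)+(\mathrm{S}_F)$, pairs it with the sup-norm decay $P_t^{B}1_B\le Ce^{-\lambda(B)t/2}$ supplied by Theorem \ref{thm:Prob-Estimate} with $\epsilon=\tfrac12$, and integrates in $t$ over $[0,\eta'F(r)]$ to get $\lambda(B(x,r))\le 2C\eta^{-1}(\eta')^{-1}F(r)^{-1}$. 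You instead (i) re-prove a survival lower bound from scratch via the exit-time decomposition, the tail estimate from $\left(\UEF\right)$, \eqref{eq:Phi-lower-bound}, Lemma \ref{prop:Change-of-Variables} and $\left(\VD\right)$, and then (ii) convert it into $\left(\lambFle\right)$ through the $L^2$ operator-norm identity $\|P_t^{B}\|_{2\to2}=e^{-t\lambda(B)}$ tested against $1_{B(x,r/2)}$ plus one doubling step; step (ii) is correct and has the mild advantage of not invoking Theorem \ref{thm:Prob-Estimate} at all, so the lemma becomes independent of the main theorem. What the paper's route buys is precisely the avoidance of your step (i): the survival bound you set out to prove is essentially the condition $(\mathrm{S}_F)$ that \cite{Grigoryan-Hu-2014} already provides, and re-deriving it is where all the real work and the delicate bookkeeping sit --- in particular, the strong Markov step at $\sigma$ needs the tail bound $\mathbb{P}_z\left(X_s\notin B(z,r/2)\right)\le\psi\left(s/F(r/2)\right)$ for \emph{every} $z$ off a single properly exceptional set, and since $z\mapsto\mathbb{P}_z\left(X_s\notin B(z,r/2)\right)$ is not of the form $P_sf$ for a fixed $f$, the quasi-continuity device of Step 3 of the proof of Theorem \ref{thm:Prob-Estimate} does not apply verbatim; one has to argue as in \cite{Grigoryan-Hu-2014,Grigoryan-Telcs-2012}. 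If you simply quote $(\mathrm{S}_F)$ from the same reference, your $L^2$ argument in (ii) finishes the proof in a few lines and is a clean alternative to the paper's integration-in-$t$ argument.
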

\begin{proof}
Assume $\left(\UEF\right)$ holds.  It follows from \cite[Theorem 2.1]{Grigoryan-Hu-2014}  that under the same assumptions of Proposition \ref{MainThm2}
\[
\left(\UEF\right) \iff\left(\FKF\right)+\left(\text{S}_{F}\right)\label{thm:Grigoryan-Hu-2},
\]
where $\left(\text{S}_{F}\right)$ means  there exists an $\eta,\eta^\prime\in\left(0,1\right)$ 
such that for all $t>0$ and all balls $B=B\left(x,r\right)$ with
$t\leq\eta^{\prime}F\left(r\right)$, one has 
\begin{equation}
\eta\leq P_{t}^{B}1_{B}\left(y\right),\text{ for }\mu-\text{a.e. }y\in\frac{1}{4}B.\label{eq:lambda_beta-upper-1}
\end{equation}
We now only need to show that $\left(\UEF\right)$  implies $\left(\lambF\right)$ to finish the proof of the Lemma.

We first show the upper bound of $\left(\lambF\right)$. By Theorem \ref{thm:Prob-Estimate} with $\epsilon=1/2$, we know there there exists $C>0$ such that
for any domain $D\subset M$ we have 
\begin{equation}
P_{t}^{D}1_{D}\left(y\right)\leq Ce^{-\lambda\left(D\right)t/2},\label{eq:lambda_beta-upper-2}
\end{equation}
for almost every $y\in D$. Applying (\ref{eq:lambda_beta-upper-2})
to $B\left(x,r\right)$ and using (\ref{eq:lambda_beta-upper-1})
we have that for all balls, 
\[
\eta\leq Ce^{-\lambda\left(B\left(x,r\right)\right)t/2},
\]
as long as $t\leq\eta^{\prime}F\left(r\right)$. Integrating both sides
from $0$ to $\eta^{\prime}F\left(r\right)$ we have 
\[
\eta\cdot \eta^{\prime}F\left(r\right)  \leq\frac{2C}{\lambda\left(B\left(x,r\right)\right)}\left(1-e^{-\lambda\left(B\left(x,r\right)\right)\eta^{\prime}F\left(r\right)/2}\right),
\]
which implies that 
\begin{equation}\label{eq:Main-Prop1-eq2}
\lambda\left(B\left(x,r\right)\right)\leq2C\eta^{-1}(\eta^{\prime})^{-1}F\left(r\right)^{-1}.
\end{equation}


The lower bound estimate on $\lambda\left(B\left(x,r\right)\right)$ follows immediately by $\left(\FKF\right)$. Indeed, by taking $D=B$ in $\left(\FKF\right)$ one  has that there exists constants $c,\nu>0$ such that for all balls
$B\subset M$ of radius $r>0$ , we have 
\begin{equation}\label{eq:Main-Prop1-eq3-new}
\lambda\left(B\right)\geq\frac{c}{F\left(r\right)}\left(\frac{\mu\left(B\right)}{\mu\left(B\right)}\right)^{\nu}=\frac{c}{F\left(r\right)},
\end{equation}

Combining \eqref{eq:Main-Prop1-eq2} and \eqref{eq:Main-Prop1-eq3-new} we then obtain $\left(\lambF\right)$.
\end{proof}
%


The idea of the following is similar to the one found in \cite[Lemma 4.3]{Telcs-2001b}
\begin{lemma}
\label{lem:UE_implies_E_bar_prime}Under the same assumptions of Proposition \ref{MainThm2} we have that 
\begin{align*}
\left(\EF\right) \implies\left(\Ebar\right)\quad\text{and}\quad 
\left(\EF\right)  \implies\left(\Ebarp\right).
\end{align*}
\end{lemma}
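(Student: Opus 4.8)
The plan rests on two elementary facts: the pathwise monotonicity of exit times, $\tau_{D_1}\le\tau_{D_2}$ whenever $D_1\subset D_2$, and the nesting of balls coming from the triangle inequality. Throughout I fix the properly exceptional set $\mathcal{N}$ provided by $(\EF)$; both implications will be proved with this same $\mathcal{N}$ and, for $(\Ebarp)$, with $\delta=\tfrac12$.

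The first step is an ``upper transfer'' estimate. If $d(x,y)<r$ then $B(x,r)\subset B(y,2r)$, hence $\tau_{B(x,r)}\le\tau_{B(y,2r)}$ pathwise; taking $\mathbb{E}_y$ and applying the upper half of $(\EF)$ at the center $y$ together with \eqref{eq:F-regularity} gives, for every $y\in B(x,r)\setminus\mathcal{N}$,
\[
\mathbb{E}_y\!\left[\tau_{B(x,r)}\right]\le\mathbb{E}_y\!\left[\tau_{B(y,2r)}\right]\le c_2 F(2r)\le c_2 C_F 2^{\beta'}F(r).
\]
Symmetrically, if $d(x,y)<\tfrac12 r$ then $B(y,\tfrac12 r)\subset B(x,r)$, so $\tau_{B(y,r/2)}\le\tau_{B(x,r)}$, and the lower half of $(\EF)$ at $y$ with \eqref{eq:F-regularity} yields, for every $y\in B(x,\tfrac12 r)\setminus\mathcal{N}$,
\[
\mathbb{E}_y\!\left[\tau_{B(x,r)}\right]\ge\mathbb{E}_y\!\left[\tau_{B(y,r/2)}\right]\ge c_1 F(r/2)\ge c_1 C_F^{-1}2^{-\beta'}F(r).
\]

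With these in hand both conclusions are immediate. For $(\Ebar)$: when $x\in M\setminus\mathcal{N}$, the first display bounds $\sup_{y\in B(x,r)\setminus\mathcal{N}}\mathbb{E}_y[\tau_{B(x,r)}]\le c_2 C_F 2^{\beta'}F(r)$, while the lower half of $(\EF)$ applied directly at $x$ gives $\mathbb{E}_x[\tau_{B(x,r)}]\ge c_1 F(r)$; this is exactly \eqref{eq:e-bar} with $C=c_1\big(c_2 C_F 2^{\beta'}\big)^{-1}$. For $(\Ebarp)$: since $\mathcal{N}$ is $\mu$-null, the two displays hold for $\mu$-a.e.\ $y$, whence $\esup_{y\in B(x,r)}\mathbb{E}_y[\tau_{B(x,r)}]\le c_2 C_F 2^{\beta'}F(r)$ and $\einf_{y\in B(x,r/2)}\mathbb{E}_y[\tau_{B(x,r)}]\ge c_1 C_F^{-1}2^{-\beta'}F(r)$ (the essential infimum being over a set of positive measure because $\mu$ has full support); the defining inequality of $(\Ebarp)$ then holds with $\delta=\tfrac12$ and $C=c_1 C_F^{-2}4^{-\beta'}/c_2$.

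There is no substantive obstacle here; the only point needing care is the bookkeeping with the exceptional/null set: one must be entitled to invoke $(\EF)$ at the \emph{moving} center $y$, which is legitimate precisely because $y$ is taken outside $\mathcal{N}$ (resp.\ because a $\mu$-null set does not affect $\esup$ or $\einf$), and one must use the pathwise — not merely in-expectation — monotonicity of $\tau_D$ so that the inequalities survive passing to $\mathbb{E}_y$.
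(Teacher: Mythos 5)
Your proof is correct and follows essentially the same route as the paper: the pathwise inclusion $B(x,r)\subset B(y,2r)$ (resp.\ $B(y,r/2)\subset B(x,r)$) to transfer the $(\EF)$ bounds from the moving center $y$, combined with \eqref{eq:F-regularity}, yielding $(\Ebar)$ via the lower bound at $x$ and $(\Ebarp)$ with $\delta=\tfrac12$ via essential sup/inf outside the $\mu$-null exceptional set. The constants and the handling of $\mathcal{N}$ match the paper's argument, so there is nothing to add.
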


\begin{proof}


%


From $\left(\EF\right)$ we know that there exist a
properly exceptional set $\mathcal{N}$ and constants $c_{1},c_{2}>0$
such that for all $x\in M\backslash\mathcal{N}$ and $r>0$ we have
\begin{equation}\label{eq:E-bar-prime-1}
c_{1} F\left(r\right)\leq\mathbb{E}_{x}\left[\tau_{B(x,r)}\right]\leq c_{2}F\left(r\right).
\end{equation}
By a coupling argument we have that for any given $x\in M$ and $r>0$ we have
\begin{align}
\mathbb{E}_{y}\left[\tau_{B\left(x,r\right)}\right] & \leq\mathbb{E}_{y}\left[\tau_{B\left(y,2r\right)}\right].\label{eq:coupling}
\end{align}
for any $y\in B\left(x,r\right)$.
Hence
\begin{equation}\label{eq:Main-Prop-1-eq1e}
\sup_{y\in B\left(x,r\right)\setminus\mathcal{N}}\mathbb{E}_{y}\left[\tau_{B\left(x,r\right)}\right]\leq\sup_{y\in B\left(x,r\right)\setminus\mathcal{N}}\mathbb{E}_{y}\left[\tau_{B\left(y,2r\right)}\right]\leq c_{2}F\left(2r\right) \leq c_{2}C_F 2^{\beta^\prime}F\left(r\right).
\end{equation}
By \eqref{eq:Main-Prop-1-eq1e} combined with \eqref{eq:E-bar-prime-1} we have that for $x\in M\setminus\mathcal{N}$,
\begin{align*}
\sup_{y\in B\left(x,r\right)\setminus\mathcal{N}}\mathbb{E}_{y}\left[\tau_{B\left(x,r\right)}\right] & \leq c_{2}C_F 2^{\beta^\prime}F\left(r\right)
  \leq 2^{\beta^\prime}C_F c_{2}c_{1}^{-1}\mathbb{E}_{x}\left[\tau_{B\left(x,r\right)}\right],
\end{align*}
so that $\left(\Ebar\right)$ holds.


To see the second implication, by combining \eqref{eq:Main-Prop-1-eq1e} and \eqref{eq:E-bar-prime-1} we have that for $\mu-$almost every $z\in B\left(x,\frac{r}{2}\right)$,
\begin{align*}
 & \esup_{y\in B\left(x,r\right)}\mathbb{E}_{y}\left[\tau_{B\left(x,r\right)}\right]  \leq c_{2}C_F 2^{\beta^\prime}F\left(r\right)\\
 &\quad\quad \leq4^{\beta^{\prime}}C_{F}^{2}c_{2}c_{1}^{-1}\mathbb{E}_{z}\left[\tau_{B\left(z,\frac{r}{2}\right)}\right]
  \leq 4^{\beta^{\prime}}C_{F}^{2}c_{2}c_{1}^{-1}\mathbb{E}_{z}\left[\tau_{B\left(x,r\right)}\right],
\end{align*}
where the last inequality follows from $B\left(z,\frac{r}{2}\right)\subset B\left(x,r\right)$ for all $z\in B\left(x,\frac{r}{2}\right)$. 
Hence we have 
\[
\esup_{y\in B\left(x,r\right)}\mathbb{E}_{y}\left[\tau_{B\left(x,r\right)}\right] \leq 4^{\beta^{\prime}}C_{F}^{2}c_{2}c_{1}^{-1}\einf_{z\in B\left(x,\frac{r}{2}\right)}\mathbb{E}_{z}\left[\tau_{B\left(x,r\right)}\right]
\]
so that $\left(\Ebarp\right)$ holds with $\delta=\frac{1}{2}$.
\end{proof}

The results below handles the backward implications of Proposition \ref{MainThm2}.

\begin{lemma}
\label{lem:E-beta-lower}Under the same assumptions of Proposition \ref{MainThm2},
we have that
\[
\left(\FKF\right)+\left(\lambFle\right)+\left(\Ebar\right)\implies\left(\EFge\right).
\]
\end{lemma}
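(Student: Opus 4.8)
The plan is to deduce the lower estimate $(\EFge)$ from the spectral lower bound for mean exit times in Lemma~\ref{Lem-Balance-Lowerbound}, the comparison $(\Ebar)$ that identifies the centre of a ball as (essentially) the optimal starting point, and the spectral upper bound $(\lambFle)$. The one-line heuristic is the chain
\[
\frac{1}{\lambda(B(x,r))}\ \le\ \esup_{y\in B(x,r)}\mathbb{E}_y\!\left[\tau_{B(x,r)}\right]\ \le\ C^{-1}\,\mathbb{E}_x\!\left[\tau_{B(x,r)}\right],
\]
after which $(\lambFle)$ converts $1/\lambda(B(x,r))$ into a multiple of $F(r)$.

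I would fix the properly exceptional set $\mathcal{N}$ supplied by $(\Ebar)$, fix $x\in M\setminus\mathcal{N}$ and $r>0$, and write $B=B(x,r)$. If $\mathbb{E}_x[\tau_B]=\infty$ there is nothing to prove, so assume $\mathbb{E}_x[\tau_B]<\infty$. Then $(\Ebar)$ gives $\sup_{y\in B\setminus\mathcal{N}}\mathbb{E}_y[\tau_B]\le C^{-1}\mathbb{E}_x[\tau_B]<\infty$, and since $\mathcal{N}$ is $\mu$--null this upgrades to $\esup_{y\in B}\mathbb{E}_y[\tau_B]<\infty$. This finiteness is precisely the hypothesis required to apply Lemma~\ref{Lem-Balance-Lowerbound} with $g\equiv 1$ and $D=B$, which yields
\[
\int_0^\infty e^{-t\lambda(B)}\,dt\ \le\ \esup_{y\in B}\mathbb{E}_y[\tau_B]\ \le\ C^{-1}\mathbb{E}_x[\tau_B].
\]
In particular $\lambda(B)>0$ (otherwise the left side diverges; this is also immediate from $(\FKF)$ applied with $D=B$), so the left side equals $1/\lambda(B)$ and rearranging gives $\mathbb{E}_x[\tau_B]\ge C/\lambda(B)$. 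Finally $(\lambFle)$ gives $\lambda(B(x,r))\le c_2 F(r)^{-1}$, whence $\mathbb{E}_x[\tau_{B(x,r)}]\ge (C/c_2)\,F(r)$, which is $(\EFge)$ with $c_1=C/c_2$ on the same properly exceptional set $\mathcal{N}$.

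I do not expect a serious obstacle: the content is the three-term chain above, and the only point needing care is the justification of the finiteness hypothesis of Lemma~\ref{Lem-Balance-Lowerbound}, which is where $(\Ebar)$ (together with the harmless reduction to $\mathbb{E}_x[\tau_B]<\infty$) does the real work; $(\FKF)$ is used here only to guarantee $\lambda(B)>0$ and, if desired, the existence of the Dirichlet heat kernel. A minor technical wrinkle is that $(\lambFle)$ is stated only for $\mu$--a.e.\ $x$; to use it at the specific point $x\in M\setminus\mathcal{N}$ one either enlarges $\mathcal{N}$ by a null set (harmless in the intended applications) or invokes $(\lambFle)$ in its pointwise form for all balls, as obtained inside the proof of Lemma~\ref{prop:Main-Prop-1}.
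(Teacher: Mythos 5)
Your argument is correct and is essentially the paper's own proof: the paper likewise chains the spectral lower bound $1\le\lambda(B)\,\esup_{y\in B}\mathbb{E}_y[\tau_B]$ (i.e.\ \eqref{eq:Lower-Bound}, the $g\equiv1$ case of Lemma~\ref{Lem-Balance-Lowerbound}) with $(\lambFle)$ and $(\Ebar)$ to get $\mathbb{E}_x[\tau_{B(x,r)}]\ge cF(r)$ for $x\notin\mathcal{N}$. Your extra care with the finiteness hypothesis and with the a.e.\ qualifier in $(\lambFle)$ (resolved by the pointwise bound established inside Lemma~\ref{prop:Main-Prop-1}) only tightens points the paper leaves implicit.
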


\begin{proof} 
By \eqref{eq:Lower-Bound} along with $\left(\lambFle\right)+\left(\Ebar\right)$
we have that there exists a properly exceptional set $\mathcal{N}$
such that for all $r>0$ and all $x\in M\setminus \mathcal{N}$,
\begin{align*}
1  \leq\lambda\left(B\left(x,r\right)\right)\esup_{y\in B\left(x,r\right)}\mathbb{E}_{y}\left[\tau_{B\left(x,r\right)}\right]
\le c_2F(r)^{-1} \sup_{y\in B\left(x,r\right)\setminus \mathcal{N}}\mathbb{E}_{y}\left[\tau_{B\left(x,r\right)}\right]
 \leq \frac{c_2}{C}F(r)^{-1}\mathbb{E}_{x}\left[\tau_{B\left(x,r\right)}\right].
\end{align*}
Therefore we have the conclusion.

\end{proof}


We are now ready to prove Proposition \ref{MainThm2}.

\begin{proof}[\textbf{Proof of Proposition \ref{MainThm2}}]
We first show \eqref{eq:Main5}. By Lemmas \ref{prop:Main-Prop-1} and \ref{lem:UE_implies_E_bar_prime} and the fact that 
\begin{equation}\label{eq-UE-FK-EF}
(\UEF)\implies (\FKF)+(\EF),
\end{equation}
(see \cite{Grigoryan-Hu-Lau-2015, Grigoryan-Hu-2014})  we have that $(\UEF)\implies \left(\FKF\right)+\left(\lambFle\right)+\left(\Ebar\right)$. To see the backward implication, 
consider the condition $\left(\mathbf{E}_{F\geq}^\prime\right)$: there exists a constant
$C>0$ and $\delta\in\left(0,1\right)$ such that for all
$r>0$ and all $x\in M$
\begin{equation}
CF\left(r\right)\leq\einf _{y\in B(x,\delta  r)}\mathbb{E}_{y}\left[\tau_{B\left(x,r\right)}\right].
\end{equation}
It is known from \cite[Theorem 2.2]{Grigoryan-Hu-2014} with \cite[Equation (9.11)]{Grigoryan-Hu-Lau-2015} that $\left(\FKF\right)+\left(\mathbf{E}_{F\geq}^\prime\right) \implies\left(\UEF\right)$. 
Using an argument similar to that of the second part of the proof of Lemma \ref{lem:UE_implies_E_bar_prime} one can easily show that 
\begin{equation}
\left(\EFge\right)\implies\left(\mathbf{E}_{F\geq}^\prime\right).\label{eq:thm3.11-0}
\end{equation}
Thus we have 
\begin{align}
\left(\FKF\right)+\left(\EFge\right) & \implies\left(\FKF\right)+\left(\mathbf{E}_{F\geq}^\prime\right) \implies\left(\UEF\right)\label{eq:thm3.11-1}
\end{align}
Combining with Lemma \ref{lem:E-beta-lower} we then obtain the desired backward implication. 


To prove \eqref{eq:Main5b}, again by combining Lemmas \ref{prop:Main-Prop-1}, \ref{lem:UE_implies_E_bar_prime} and \eqref{eq-UE-FK-EF} we obtain the forward implication. 
Moreover, by using a similar argument as in the proof of Lemma \ref{lem:E-beta-lower} we obtain that 
\begin{equation}\label{eq:thm3.11-2}
\left(\FKF\right)+\left(\lambFle\right)+\left(\Ebarp\right)\implies\left(\mathbf{E}_{F\geq}^\prime\right).
\end{equation}
The backward implication then follows easily from \eqref{eq:thm3.11-2} and \eqref{eq:thm3.11-1}.
\end{proof}


\section{Examples\label{sec:7-Applications}}

The results in previous sections apply to a broad class of metric measure spaces, including Riemannian and sub-Riemannian manifolds as well as fractals. We demonstrate some examples for each of these settings.



\subsection{The Heisenberg group}

\label{ex-Heis}
The Heisenberg group $\Heis$ is the simplest non-trivial example of a sub-Riemannian manifold. It can be identified with $\R^{2n}\times \R$, equipped with the group law: 
\[
\left(\mathbf{x},z\right)\star\left(\mathbf{x}',z'\right):=\left(\mathbf{x}+\mathbf{x}',z+z'+\sum_{i=1}^{n}x_{i}y'_{i}-x'_{i}y_{i}\right).
\]
where $\mathbf{x}=\left(x,y\right)\in(\mathbb{R}^{n})^{\otimes 2},\mathbf{x}'=\left(x',y'\right)\in(\R^n)^{\otimes 2}$.
The identity in $\Heis$ is $e=(\mathbf{0},0)$ and the inverse is given by $(\mathbf{x},z)^{-1}=(-\mathbf{x},-z)$. Let $\mathfrak{h}$ denote its Lie algebra. It can be identified  with the tangent space $T_e(\Heis)$. A basis of left invariant vector fields at $p=(\mathbf{x},z)$ can be given by
\begin{equation}\label{eq-basis}
X_i(p)=\partial_{x_i}+y_i\partial_{z}, \quad Y(p)=\partial_{y_i}-x_i\partial_{z}, \quad Z(p)=\partial_{z}, \quad i=1,\dots, n.
\end{equation}
Let $\mathcal{H}_p:=\mathrm{Span}\{X_i(p), Y_i(p), i=1,\dots, n\}$, $p\in \Heis$ be a horizontal bundle. Clearly it satisfies the H\"ormander's condition since $[X_i,Y_i]=Z$ for all $1\le i\le n$.
We equip  the horizontal bundle $\ch_p$, $p\in \Heis$ with an inner product such that $\{X_i(p), Y_i(p), i=1,\dots, n\}$ is an orthonormal frame at $p$. A path $\gamma:[0,1]\to \Heis$ is called horizontal if it is absolutely continuous and $\dot{\gamma}(t)\in \ch_{\gamma(t)}$ for all $t\in [0,1]$. The Carnot-Carath\'eodory distance on $\Heis$ is then defined as
\begin{equation}\label{eq-cc-dist}
d_{cc}(p,p'):=\inf\bigg\{\int_0^1 |\dot{\gamma}(t)|_{\ch}dt \,
 ; \, 
 \gamma:[0,1]\to \Heis \ \mbox{is horizontal}, \ \gamma(0)=p,\gamma(1)=p'\bigg\}.
\end{equation}
We equip $\Heis$ with the Haar measure $\mu$. Let $B(p,r)$ denote the  ball centered at $p\in\Heis$ with radius $r>0$ (with respect to $d_{cc}$). Then one can easily observe that for any $c>0$,
\[
\mu\left(B\left(p,cr\right)\right)=c^{2n+2}\mu\left(B\left(p,r\right)\right).
\]
Clearly both $\left(\VD\right)$ and $\left(\RVD\right)$ are satisfied with $\alpha=\alpha'=2n+2$.

Consider the sub-Laplacian associated to the sub-Riemannian structure on $\Heis$:
\begin{equation}\label{eq-Laplacian}
\Delta_\ch=\sum_{i=1}^nX_i^2+Y_i^2.
\end{equation}
It is an essentially self-adjoint operator with respect to $\mu$. It is hypoelliptic and admits a smooth heat kernel 
$p_{\Heis}$ with respect to $\mu$. From \cite{HQLi-2006} and other authors we know that $\left(\UE\right)$ is satisfied with $\beta=2$.

We call the Markov process $\{\mathcal{B}_t\}_{t\ge0}$ generated by $\frac12\Delta_\ch$ the horizontal Brownian motion on $\Heis$. Let $(U, V)$ be a standard Brownian motion  in $\R^{2n}$, then $\{\mathcal{B}_t\}_{t\ge0}$ started from $\mathcal{B}_0=e$ have the following presentation 
\begin{equation}\label{eq-BM}
\BH_t=\left(U_t, V_t,A_t\right) ,
\quad\text{where}\quad
A_t=\sum_{i=1}^n\int_0^t{U^i_s}dV^i_s-V^i_sdU_s^{i} .
\end{equation}
The density for the distribution of $\BH_t$ at any $t>0$ is given by the heat kernel $p_{\Heis}$. 

For any given domain $D\subset\Heis$, by \cite[Proposition 4.1]{Chen-Chen-LMS-2021}  with smooth and non-characteristic boundary, there exists a smooth Dirichlet heat kernel $p_{D}$ such that
$p_{D}\left(p,p',t\right)>0$ for all $\left(p,p',t\right)\in\Heis\times\Heis\times\left(0,+\infty\right)$. We conclude that the results of Theorem \ref{thm:Prob-Estimate}, Corollary \ref{Cor:polynomial-bound}, Proposition \ref{Cor:main_product_bound}, the upper bound  \eqref{eq:Asymp1'} of Proposition \ref{thm:Main_Asymptotic}, and Proposition \ref{MainThm2} all hold.


\subsection{Carnot Groups}\label{sec:Carnot}

Let $M$ be a $N$-step Carnot group. It is a simply connected
Lie group with Lie algebra given by
\[
\mathfrak{g}=\mathcal{V}_{1}\oplus\cdots\oplus\mathcal{V}_{N}
\]
where $\mathcal{V}_{i}$, $i\ge1$ are subspaces of $\mathfrak{g}$ satisfying $\left[\mathcal{\mathcal{V}}_{i},\mathcal{V}_{j}\right]=\mathcal{V}_{i+j}$
for all $i,j\geq1$ and $\mathcal{V}_{s}=0$ for $s>N$. The Heisenberg group $\Heis$ (see Section \ref{ex-Heis}) is a special case of a $2$-step Carnot group.

Assume $\dim\mathcal{V}_{1}=d.$ Let $\{V_1,\dots, V_d\}$ be a basis that spans $\mathcal{V}_{1}$. Denote by $V_i(p)$, $p\in M$, $i=1,\dots, d$ the corresponding left invariant vector fields. Let $\ch_p:=\mathrm{Span}\{V_i(p),i=1,\dots, d\}$. Then $\mathcal{H}_p$, $p\in M$ is a horizontal bundle on $M$. We consider a sub-Riemannian structure on $(M, \ch)$ by setting an inner product such that $\{V_i(p),  i=1,\dots, d\}$ is an orthonormal frame at $p$. It then induces a C-C distance $d_{cc}$ similarly as in \eqref{eq-cc-dist}. Let $\mu$ be the Haar measure on $M$. It is known that the metric measure space $(M, d_{cc}, \mu)$ satisfies $\left(\VD\right)$ and $\left(\RVD\right)$. In fact, by dilation arguments we can easily see that 
\[
\mu\left(B\left(x,R\right)\right)=CR^{Q}
\]
where $B\left(x,R\right)$ is the C-C ball centered at $x\in M$ of radius $R>0$, $C=\mu\left(B\left(x,1\right)\right)$ and $Q=\sum_{j=1}^{N}i\text{dim}\mathcal{V}_{i}$ is the homogeneous dimension of $M$. Therefore $\alpha=\alpha'=Q$.

There is a natural choice of Dirichlet form associated to the sub-Riemannian structure on $M$ whose generator is given by the sub-Laplacian (see for example \cite[Section 3, pp. 233-234]{Sturm-1995})
\[
L=\sum_{i=1}^{d}V_{i}^{2}.
\]
We denote by $p\left(x,y,t\right)$, $(x,y,t)\in M\times M\times \R_+$ the heat kernel of $L$ with respect to $\mu$. The heat kernel is smooth by H\"ormander's theorem and satisfies
\begin{equation}
p_M\left(x,y,t\right)\leq\frac{C}{t^{Q/2}}\exp\left(-c\frac{d_{cc}\left(x,y\right)^{2}}{t}\right)\label{eq:Carnot-pBound}
\end{equation}
for some constants $C,c>0$. Hence $\left(\UE\right)$ is satisfied with $\beta=2$. In fact, it is well
known result of double-sided Gaussian bounds for the heat kernel (see reference \cite[
Theorem VIII2.9]{Varopoulos-etall-1992}). 

Therefore Theorem \ref{thm:Prob-Estimate},  Corollary \ref{Cor:polynomial-bound}, Proposition 
 \ref{Cor:main_product_bound}, and Proposition \ref{MainThm2} all hold. The upper bound  \eqref{eq:Asymp1'} of Proposition  \ref{thm:Main_Asymptotic} also holds.

If in addition, we are given a bounded domain $D\subset M$ with smooth and non-characteristic boundary then by \cite[Sec. 4, Proposition 4.1]{Chen-Chen-LMS-2021} the Dirichlet heat kernel $p_D$ exists, is strictly positive and smooth. By Proposition \ref{prop:Irreducibility_cond} the Dirichlet form $\left(\mathcal{E},\mathcal{F}(D)\right)$ is irreducible. Hence \eqref{eq:Asymp1} of Proposition \ref{thm:Main_Asymptotic} holds.

\begin{remark}
In results needing the irreducibility condition, we believe the non-characteristic assumption on domains is not necessary. We point to the example of the work of Carfagnini-Gordina in \cite{Carfagnini-Gordina-2020, Carfagnini-Gordina_Carnot-2022} where they are able to handle general bounded open sets on Carnot groups. In their work, they are able to show the exit time asymptotic \eqref{eq:Asymp1} for bounded open sets for Carnot groups. Moreover, they show that $\mathcal{L}_D$ has a discrete spectrum for any bounded open set, and hence the limit in \eqref{eq:Asymp1} is shown to be the first Dirichlet eigenvalue. 
\end{remark}

\subsection{Riemannian and sub-Riemannian manifolds}\label{sec:subRiemannian}

Let $(M,g)$ be a connected complete Riemannian manifold and $\mu$ be the corresponding Riemannian volume measure. Let  $\Delta$ be the Laplace-Beltrami operator on $M$, and $p(x,y,t)$ be the heat kernel associated to the heat semigroup generated by $\Delta$.

If the Ricci curvature of $M$ is bounded from below by a non-negative constant,  it is known that $(M,g, \mu)$ admits an upper and lower heat kernel Gaussian bounds. Namely for all $x,y\in M$ and $t>0$,
\[
\frac{1}{C\mu(B(x,\sqrt{t}))}\exp\left(-\frac{d(x,y)^2}{(4-\epsilon)t} \right) \le p_M(x,y,t)\le \frac{C}{\mu(B(x,\sqrt{t}))}\exp\left(-\frac{d(x,y)^2}{(4+\epsilon)t} \right)
\]
for any $0<\epsilon<1$, where $C>0$ depends on $\epsilon$ and on $M$.

Another large class of examples is the \textit{sub-Riemannian manifolds with transverse
symmetries}. Let $M$ be an $n$-dimensional smooth complete manifold equipped with a bracket generating distribution $\mathcal{H}$ of dimension $m<n$. Let $g$ be a fiber-wise inner product on $\mathcal{H}$. Let $\mathcal{V}$ denote the finite-dimensional Lie algebra of all sub-Riemannian Killing vector fields on $M$. Transverse symmetry means that for every $Z\in \mathcal{V}$ the Lie derivative of $g$ with respect to $Z$ is constantly $0$, and $[Z,X]\in \mathcal{H}$ for all $X\in \mathcal{H}$.

Let $L$ be the sub-Laplacian on $M$ and $p(x,y,t)$ the associated heat kernel. Assume $L$ satisfies a generalized curvature-dimension inequality $CD(\rho_1, \rho_2, \kappa, m)$ that is introduced by Baudoin-Garofalo \cite{Baudoin-Garofalo-2017}, where $\rho_1\in \R$, $\rho_2>0$, $\kappa\ge 0$ are geometric bounds for certain sub-Riemannian tensors. Then we know that  (see \cite[ Theorem 4.1]{BaudoinBonnefontGarofalo2014})  given $\rho_1\ge0$,  $\left(\ULE\right)$ is satisfied in the form of
\[
\frac{1}{C\mu(B(x,\sqrt{t}))}\exp\left(-\frac{Dd(x,y)^2}{m(4-\epsilon)t} \right) \le p_M(x,y,t)\le \frac{C}{\mu(B(x,\sqrt{t}))}\exp\left(-\frac{d(x,y)^2}{(4+\epsilon)t} \right).
\]
where $C=C(m, \kappa, \rho_2,\epsilon)>0$ and $D=m\left(1+\frac{3\kappa}{2\rho_2}\right)$. It is also shown that the volume doubling condition $\left(\VD\right)$ is satisfied in this case (see \cite[ Theorem 3.1]{BaudoinBonnefontGarofalo2014}). We refer to
\cite[Sec. 3, pp.3184-3186]{Baudoin-Gordina-Mariano-2019} for the details of the construction of the Dirichlet form related to the operator $L$.  Therefore we conclude that Theorem \ref{thm:Prob-Estimate}, Corollary  \ref{Cor:polynomial-bound}, Proposition \ref{Cor:main_product_bound}, the upper bound  \eqref{eq:Asymp1'} of Proposition \ref{thm:Main_Asymptotic} and Proposition \ref{MainThm2} also hold. Using the same argument as in the Carnot group case, the asymptotic \eqref{eq:Asymp1} also holds for bounded and non-characteristic domains.

\subsection{Sierpinski gasket and fractal-like manifolds with varying walk-dimensions}\label{sec:fractals}

The existence of ``Brownian motion" on fractals and its heat kernel has been extensively studied for decades (see \cite{Barlow-Perkins-1988,Goldstein-1987,Kusuoka-1987} for example). 
The Sierpinski gasket $K$ can be defined as the unique compact subset in $\C$ such that 
\[
K=\cup_{i=1}^3 f_i(K)
\]
where $f_i(z)=\frac{z-p_i}{2}+p_i$, $i=1,2,3$ for all $z\in \C$ and $V:=\{p_1,p_2, p_3\}$ are the vertices of an equilateral triangle of side length $1$ in $\mathbb{C}$. One can define the unbounded Sierpinski gasket as 
\[
\hat{K}=\cup_{n=0}^\infty 2^n K.
\]
To construct $\hat{K}$ one can define,
\[
V_0=\cup_{m=0}^\infty 2^m\left( \cup_{i_1,\dots, i_m=1}^3 f_{i_1}\circ \cdots \circ f_{i_m}(V) \right), \quad V_m=2^{-m}V_0,
\]
so that $\hat{K}$ is given by the closure of $\cup_{m\ge0}V_m$.
 It is known that the Hausdorff dimension of $K$ with respect to the Euclidean metric (denoted by $d(x,y)=|x-y|$) is $\frac{\ln3}{\ln2}$. A Hausdorff measure on $K$ can be given by the Borel measure $\mu$ on $K$ such that for all $m\ge1$, $i_1,\dots, i_m\in \{1,2,3\}$,
\[
\mu(f_{i_1}\circ \cdots \circ f_{i_m}(K))=\frac{1}{3^m}.
\]
The metric $d$ and measure $\mu$ can be extended on $\hat{K}$ correspondingly. Note that the measure $\mu$ is $\frac{\ln3}{\ln2}$-Ahlfors regular, thus the metric measure space $(\hat{K}, d, \mu)$ satisfies $\left(\VD\right)$ and $\left(\RVD\right)$ with $\alpha=\alpha'=\frac{\ln3}{\ln2}$. 
For any function $f:V_m\to \R$, the discrete Laplacian on $V_m$ is defined by
\[
\Delta_m f(p)=5^m \sum_{p'\in U_{m,p}}(f(p')-f(p)),\quad p\in V_m\setminus V_0,
\] 
where $U_{m,p}$ denotes the collection of neighbors of $p$ in $V_m$. {     Let $\mathcal{D}$ be the set of continuous functions on $K$ such that $\lim_{m\to \infty}\Delta_m f(p)=g(p)$ uniformly for $p\in\cup_{m\ge 0}V_m\setminus V_0$ for some $g\in C(K)$. }
The Laplacian $\Delta$ on the Sierpinski gasket $\hat{K}$ can be given by 
\begin{equation}\label{eq:Laplacian}
\Delta f(p)=\lim_{m\to \infty}\Delta_m f(p), \quad f\in \mathcal{D},\quad p\in \cup_{m\ge0}V_m\setminus V_0.
\end{equation}
As in \cite{Barlow-Perkins-1988}, one can construct a Brownian motion on $\hat{K}$ and the corresponding heat kernel satisfies $\left(\ULE\right)$ with $\beta=\frac{\log5}{\log2}>2$,  
\[
p_{\hat{K}}\left(x,y,t\right)\asymp\frac{C}{t^{\alpha/\beta}}\exp\left(-c\frac{d\left(x,y\right)^{\beta/\left(\beta-1\right)}}{t^{1/\left(\beta-1\right)}}\right).
\]

Other examples include affine nested fractals, post critically finite self-similar fractals, among many others. We refer the reader to references such as \cite{Barlow-1998, Fitzsimmons-Hambly-Kumagai-1994,Hambly-Kumagai-1999,Kigami-2009} for heat kernel sub-Gaussian estimates and the verification of the assumptions of our theorems on a large classes of Fractals.

Another set of examples that is worth mentioning here are the fractal-like  manifolds, whose walk dimension $\beta$ varies upon the consideration of short time or long time. For instance, consider the $2$-dimensional Riemannian manifold  $M$ that is made from the Sierpinski gasket graph $V_0$ by 1) replacing the edges by tubes of length $1$; 2) gluing the tubes together smoothly at the vertices  allowing some small bumps and  removing some of the tubes. It is known that $M$ is roughly isometric (in the sense of \cite{Kan1, Kan2}) to the Sierpinski gasket graph $V_0$ and satisfies $\left(\UEF\right)$ with (see \cite{Barlow-Bass-Kumagai-2006})
\[
F(r)=\begin{cases}
r^{2} & r<1\\
r^{\log5/\log2} & r\geq1
\end{cases}.
\]
Namely there exists constants $\Cue,\cue>0$ such that  the heat kernel satisfies 
\begin{equation*}
p_{M}(x,y,t)\leq\frac{\Cue}{V\left(x,t^{1/2}\right)}\exp\left(-\cue\left(\frac{d\left(x,y\right)^{2}}{t}\right)\right),\quad  t\leq1\vee d(x,y)
\end{equation*}
and
\begin{equation*}
p_{M}(x,y,t)\leq\frac{\Cue}{V\left(x,t^{\log2/\log5}\right)}\exp\left(-\cue\left(\frac{d\left(x,y\right)^{\log5/\log2}}{t}\right)^{\frac{1}{\log5/\log2-1}}\right), \quad t\geq1\vee d(x,y).
\end{equation*}

In fact the fractal-like manifold $M$ satisfies $\left(\ULE\right)$ with $\beta=2$ in $\left\{ \left(t,x,y\right):t\leq1\vee d(x,y)\right\} $ and $\beta=\log5/\log2$ in $\left\{ \left(t,x,y\right):t\geq1\vee d(x,y)\right\}$.


\begin{acknowledgement}
We thank Rodrigo Ba\~nuelos, Fabrice Baudoin,  Li Chen, and Maria Gordina for helpful discussions on improving the paper. We thank Tuomas Hyt\"onen for a remark that helped simplify one of our arguments. We also thank two anonymous referees whose insightful comments and suggestions have led to an improved manuscript.
\end{acknowledgement}

\bibliographystyle{plain}	
\bibliography{MMS-MainRef}

\end{document}